\newtheorem{thm}{Theorem}[section]
\newtheorem{prop}[thm]{Proposition}
\newtheorem{cor}[thm]{Corollary}
\newtheorem{lem}[thm]{Lemma}
\newtheorem{claim}[thm]{Claim}
\theoremstyle{definition}
\newtheorem{defn}[thm]{Definition}
\theoremstyle{remark}
\newtheorem{rem}[thm]{Remark}
\newtheorem{ex}[thm]{Example}
\newcommand{\K}{{\mathbb K}}
\newcommand{\Q}{{\mathbb Q}}
\newcommand{\T}{{\mathcal T}}
\newcommand{\e}{\varepsilon}
\newcommand{\D}{\text{D}}
\newcommand{\DD}{\text{\em D}}
\newcommand{\mapright}[1]{%
 \smash{\mathop{%
  \hbox to 1cm{\rightarrowfill}}\limits_{#1}}}
\newcommand{\maprightd}[2]{%
 \smash{\mathop{%
  \hbox to 1.2cm{\rightarrowfill}}\limits^{#1}\limits_{#2}}}
\newcommand{\mapleft}[1]{%
 \smash{\mathop{%
  \hbox to 1cm{\leftarrowfill}}\limits_{#1}}}
\newcommand{\mapleftu}[1]{%
 \smash{\mathop{%
  \hbox to 0.8cm{\leftarrowfill}}\limits^{#1}}}
\newcommand{\maprightu}[1]{%
 \smash{\mathop{%
  \hbox to 1cm{\rightarrowfill}}\limits^{#1}}}
\newcommand{\maprightud}[2]{%
 \smash{\mathop{%
  \hbox to 1cm{\rightarrowfill}}\limits^{#1}_{#2}}}
\newcommand{\mapleftud}[2]{%
 \smash{\mathop{%
  \hbox to 1cm{\leftarrowfill}}\limits^{#1}_{#2}}}
\newcounter{eqn}[section]
\def\theeqn{\textnormal{(\thesection.\arabic{eqn})}}
\def\eqnlabel#1{%
  \refstepcounter{eqn}%
  \label{#1}%
  \leqno{\theeqn}}
\begin{document}

\title{
The ghost length and duality on the chain and cochain type levels}

\footnote[0]{{\it 2010 Mathematics Subject Classification}: 
16E45, 18E30, 55R20, 13D07.
\\ 
{\it Key words and phrases.} Level, differential graded algebra, 
triangulated category, Koszul duality. 

This research was partially supported by a Grant-in-Aid for Scientific
Research (B) 25287008 
from Japan Society for the Promotion of Science.

Department of Mathematical Sciences, 
Faculty of Science,  
Shinshu University,   
Matsumoto, Nagano 390-8621, Japan   
e-mail:{\tt kuri@math.shinshu-u.ac.jp}
}

\author{Katsuhiko KURIBAYASHI}
\date{}
   
\maketitle

\begin{abstract}
We establish equalities between cochain and chain type levels of maps by making 
use of exact functors which connect appropriate derived and coderived categories.  
Relevant conditions for levels of maps to be finite are extracted from the equalities 
which we call duality on the levels. Moreover, 
we give a lower bound of the cochain type level of the diagonal map on 
the classifying space of a Lie group by considering the ghostness of a shriek map 
which appears in derived string topology. 
A variant of Koszul duality for 
a differential graded algebra is also discussed. 
\end{abstract}

\section{Introduction}
This work is a sequel to previous one \cite{K4, K5}  in which new topological invariants have been studied. 

In \cite{ABIM}, Avramov, Buchweitz, Iyengar and Miller introduced 
a numerical invariant of an objects in a triangulated category, which is called 
the {\it level}. The invariant counts the number of steps to build the given 
object out of some fixed object via triangles. It seems to be {\it cone length} in the category. 
J{\o}rgensen \cite{J, J2, J3} 
developed categorical representation theory of spaces employing the singular (co)chain complexes 
of spaces. In the context of such work, the cochain and 
chain type levels of maps between topological spaces have been defined and studied in \cite{K4, K5}.  

The cochain type level of a map $\alpha : Y \to X$ indeed 
provides a lower bound on the number of 
spherical fibrations which describe a factorization of $\alpha$ in a relevant sense; 
see \cite[Proposition 2.11]{K4}. On the other hand,  
the chain type level of the identity map on a space $Y$ gives an upper bound of the 
L.-S. category of $Y$ in rational case; see \cite[Corollary 2.9]{K5}.  
The L.-S. category is also considered a homotopy invariant counting the number
of cofibrations which construct a given space. 
Therefore, it is natural to anticipate
that the levels of maps inherit duality 
between fibrations and cofibrations, namely Eckmann-Hilton duality. 
For example, one might expect that chain and cochain type levels fit 
into appropriate equalities, which we may call duality on the levels. 
   
In this article, we establish such equalities between 
these two kinds of levels; see Theorem \ref{thm:main} below. 
One of the highlights in getting the result is that we 
make use of a variant of Koszul duality for differential graded
algebras which is given by considering 
exact functors between certain derived and 
coderived categories; see \cite{H-W, Keller2, L-H, P} for Koszul duality. 
In fact, Theorem \ref{thm:diagram} below describes such a variant. 
We can take the singular chains of a space as a coalgebra in Theorem \ref{thm:diagram}. In consequence, 
the commutative diagrams of categories in the theorem give
duality of the levels in Theorem \ref{thm:main}. 
  
Let $X$ be a space and $LX$ the free loop space, namely the space of all continuous maps from the circle $S^1$ to $X$ with 
compact-open topology.   
String topology initiated by the fascinating paper of Chas and Sullivan \cite{C-S} describes a rich structure in 
the homology of 
the free loop space $LM$ of a closed oriented manifold $M$. 
A basic one of the string operations is the so-called loop product on 
the shifted homology $H_{*+\dim M}(LM)$. 
The key to defining these operations is to construct a shriek map (an Umkehr map or a wrong way map) 
associated with the diagonal map on $M$.  

F\'elix and Thomas \cite{F-T} generalized the construction of shriek maps 
on manifolds to that on Gorenstein spaces. 
This enables us to develop string topology in appropriate derived categories; see \cite{K-M-N_1, K-M-N_2} for torsion and extension 
functor descriptions  
of loop (co)products and their applications. 
It is important to mention that the class of Gorenstein spaces contains 
the classifying spaces of connected Lie groups, Borel constructions more general,  
Poincar\'e duality spaces and hence closed oriented manifolds; see \cite{D-G-I, FHT_G, Murillo}.      

Let $BG$ be the classifying space of a connected Lie group $G$. 
In \cite{C-M}, Chataur and Menichi showed that the homology 
$H_*(LBG ;\K)$ with coefficients in a field $\K$ carries the structure of homological conformal field theory 
(HCFT). 
The integration along the fibre of a Borel fibration plays a crucial role in defining the HCFT 
operations. In a derived categorical setting, the integration is considered the homomorphism induced by a 
shriek map on the derived category $\D(C^*(BG\times BG))$ of 
differential graded modules (DG modules) over the cochain 
algebra $C^*(BG\times BG)$ with coefficients in $\K$; see \cite[Theorems 5 and 13]{F-T}. 

Another aim of this article is to consider 
behavior of such shriek maps in the derived category $\D(C^*(BG\times BG))$, more generally in 
$\D(C^*(BG^{\times n}))$.  In particular, we see that non-triviality of a shriek map associated 
with the diagonal map on $BG$ in $\D(C^*(BG^{\times n}))$ gives a lower bound of the {\it ghost length} of $C^*(BG)$; see Theorem \ref{thm:ghost} and Remark \ref{rem:generator}. 
In consequence,  a lower bound of 
the cochain type level of the diagonal map $BG \to BG^{\times n}$ is obtained; 
see Proposition \ref{prop:shriek}.
We mention that the notion of {\it ghosts} has been actually introduced by 
Christensen \cite{Christensen} in a more general framework.   

We conclude this section with comments on topics related to the invariant {\it level}.
Our attempt in \cite{K4, K5} and this paper is closely related to the work in \cite{B-I-K, D-G, R, Shamir}. 
Indeed, the dimension $\dim {\mathcal T}$ of a triangulated category ${\mathcal T}$, which is introduced by Rouquier \cite{R}; 
see also \cite{B-B}, is defined by 
\[
\dim {\mathcal T} = \inf \{d \in {\mathbb N} \ |
 \text{{\tt thick}}^{d+1}_{\T}(C) = {\mathcal T} \ \text{for some object} \ C \ \text{in} \  {\mathcal T} \}.
\]
Here $\text{{\tt thick}}^{j}_{\T}(C)$ denotes the $j$th thickening which is a subcategory of ${\mathcal T}$ 
used when defining the level; see Section 2. 
Thus the dimension gives a global invariant of triangulated categories. 

The results in \cite{B-I-K} due to Benson, Iyengar and Krause are concerned with 
the classification of thick subcategories of a triangulated category. In \cite[2.1 \! Theorem]{D-G},  Dwyer and Greenlees give 
an equivalence between  
categories of torsion and complete modules. Moreover, the result \cite[4.6 Proposition]{D-G} asserts that 
torsion modules are chain complexes built from a fixed complex.  
Then these also clarify global nature of thick or {\it localizing} subcategories.  

On the other hand, the level considered here captures properties of individual objects, 
which come from topological spaces via the singular chain and 
cochain functors; see Remark \ref{rem:level_vs_dimension}  below. 

In \cite{D-G-I}, Dwyer, Greenlees and Iyengar have developed Morita theory in algebraic topology 
by making use of ring spectra. In particular, the result \cite[3.16 Proposition]{D-G-I} describes 
a necessary and sufficient condition for the level of a map to be finite.  
Very recently,  Mao \cite{XFM} has introduced a new numerical invariant for DG modules, 
which is defined by replacing thick subcategories in the definition of the level with localizing ones.  
The invariant of a bounded below DG module coincide with the ghost length plus one; 
see \cite[Theorem A]{XFM}.  

Following \cite{Shamir}, the string topology category invented by 
Blumberg, Cohen and Teleman \cite{B-C-T} can be regarded as a full subcategory of one of the derived categories that we deal with in this paper.   
Then we can expect that machinery used in order to investigate the invariant level is applicable to the study of the string topology category; 
see Remark \ref{rem:string_topology} for such expectation. 
It is worth noting that the recent result \cite[Theorem 1.2]{Shamir} due to Shamir, which is concerned with 
the string topology category,  is deduced by relying on the results in \cite{B-I-K, D-G} cited above.

\section{results} 
 
To describe our results more precisely,   
we first recall from \cite[Section 2]{ABIM} the
definition of the level of an object in a triangulated category $\T$. 
We say that a subcategory of $\T$ is {\it strict} if it is closed under isomorphisms in $\T$. 

For a given object $C$ in $\T$,  
we define the $0$th {\it thickening} 
by $\text{{\tt thick}}^0_{\T}(C)=\{0\}$
and $\text{{\tt thick}}^1_{\T}(C)$ to be the smallest strict full subcategory
which contains $C$ and is closed under taking finite coproducts,
retracts and all shifts. Moreover for $n > 1$ define inductively 
the $n$th thickening $\text{{\tt thick}}^n_{\T}(C)$ 
by the smallest strict full subcategory of 
$\T$ which is closed under retracts and contains objects $M$
admitting a distinguished triangle 
$
M_1 \to M \to M_2 \to \Sigma M_1 
$
in $\T$ for which $M_1$ and $M_2$ are in 
$\text{{\tt thick}}^{n-1}_{\T}(C)$ and $\text{{\tt thick}}^1_{\T}(C)$,
respectively. 
A triangulated subcategory ${\mathcal C}$ of $\T$ is said to be 
{\it thick} if it is closed under taking retracts.   
Then the thickenings provide a filtration
of the smallest thick subcategory 
$\text{{\tt thick}}_{\T}(C)$ of $\T$ containing the
object $C$: 
\[
\{0\} = \text{{\tt thick}}^0_{\T}(C) \subset \cdots \subset 
\text{{\tt thick}}^n_{\T}(C) \subset \cdots \subset 
\cup_{n\geq 0}\text{{\tt thick}}^n_{\T}(C) = \text{{\tt thick}}_{\T}(C). 
\]

For an object $M$ in $\T$, 
we define a numerical invariant $\text{level}_{\T}^C(M)$, which is
called the {\it $C$-level of} $M$,  by 
\[
\text{level}_{\T}^C(M):= \inf \{n \in {\mathbb N} \ |
 M \in  \text{{\tt thick}}^{n}_{\T}(C) \}. 
\]
It turns out that the $C$-level of an object $M$ in $\T$ counts the number
of steps required to build $M$ out of the object $C$ via triangles. 
For more details and general features of the level, we refer
the reader to \cite[Sections 2 and 3]{ABIM}.

Let $\K$ be a field of arbitrary characteristic and $R$ a DG (that is, differential graded) algebra over $\K$. 
Let $\D(R)$ denote the derived category of DG right $R$-modules. Observe that 
the category $\D(R)$ comes equipped with the structure of a triangulated category \cite{Keller}, 
in particular with the shift functor $\Sigma$ defined by 
$(\Sigma M)^{n} = M^{n+1}$. 

We here recall from \cite{K4} and \cite{K5} 
two numerical topological invariants defined by the level 
in a triangulated category $\D(R)$. Unless otherwise explicitly stated, 
it is assumed that a space has the
homotopy type of a connected CW complex whose cohomology with coefficients in the
underlying field is locally finite. 

Let $B$ be a space and $\mathcal{TOP}_B$ the category of maps with the
target $B$; that is, an object of $\mathcal{TOP}_B$ is a map $f : X \to B$ and a morphism form 
$f : X \to B$ to $g : X \to B$ is a map $\alpha : X \to Y$ which satisfies 
the condition that $f= g \circ \alpha$. 
For any object $f : X \to B$,    
the normalized singular cochain $C^*(X; \K)$ with coefficients in $\K$ is regarded as
a DG right module over the cochain algebra $C^*(B; \K)$ 
via the induced map $C^*(f) : C^*(B; \K) \to C^*(X; \K)$. 
Thus the cochain functor gives rise to a contravariant functor 
from the category 
$\mathcal{TOP}_B$ to the triangulated category $\D(C^*(B; \K))$:    
\[
C^*(s(-) ; \K) :  \mathcal{TOP}_B \to \D(C^*(B; \K)), 
\]
where $s(f)$ denotes the source of an object 
$f$ in $\mathcal{TOP}_B$. 

\begin{defn} Let $f$ be an object of $\mathcal{TOP}_B$. 
The {\it cochain type level} of the map $f$ is defined by 
the $C^*(B; \K)$-level of the DG module $C^*(s(f); \K)$, namely 
$\text{level}_{\D(C^*(B; \K))}^{\ C^*(B; \K)}(C^*(s(f); \K))$. 
\end{defn}

Let $F_f$ be the homotopy fibre of a map $f : X \to B$. 
The Moore loop space $\Omega B$ acts on the space $F_f$ 
by the holonomy action. Thus the normalized chain complex 
$C_*(F_f; \K)$ is a DG
module over the chain algebra $C_*(\Omega B; \K)$. 
The normalized singular chain and the homotopy fibre construction enable us 
to obtain a covariant functor  
\[
C_*(F_{(-)} ; \K) :  \mathcal{TOP}_B \to \D(C_*(\Omega B; \K))
\]
from the category $\mathcal{TOP}_B$ 
to the triangulated category $\D(C_*(\Omega B; \K))$.     

\begin{defn} Let $f$ be an object of $\mathcal{TOP}_B$. 
The {\it chain type level} of the map $f$ is defined by the 
$C_*(\Omega B; \K)$-level of the DG module $C_*(F_f; \K)$, namely 
$\text{level}_{\D(C_*(\Omega B; \K))}^{\ C_*(\Omega B; \K)}(C_*(F_f; \K))$. 
\end{defn}

More generally, we call the levels of objects in 
$\D(C_*(\Omega B; \K))$ and in $\D(C^*(B; \K))$ the {\it chain type levels} and
the {\it cochain type levels}, respectively. In what follows, the coefficients
in the singular (co)chain complex and their homology are often omitted if
the context makes them clear.  

\begin{rem}
\label{rem:level_vs_dimension}  Let ${\mathcal T}^c$ be the full subcategory of the triangulated 
category 
${\mathcal T}=\D(A)$ consisting of compact objects, where $A=C^*(S^d; \K)$.   
The result \cite[Proposition 6.6]{S} implies 
that for any $i \in {\mathbb N}$, there exists an indecomposable object $Z_i$ in ${\mathcal T}^c$
such that 
$\text{level}_{\mathcal T}^{A}Z_i =\text{level}_{{\mathcal T}^c}^{A}Z_i = i+1$. 
On the other hand, $\dim {\mathcal T}^c = \infty$.  In fact, if  $\dim {\mathcal T}^c = l < \infty$, 
then there is an object $C$ in ${\mathcal T}^c$ such that 
${\mathcal T}^c =  \text{{\tt thick}}^{l+1}_{\T^c}(C)$. This yields that 
$\text{level}_{\T^c}^{C}M \leq l+1$ for any object $M \in \T^c$.  
Since $C$ is compact, it follows from \cite[Theorem 5.3]{Keller} that 
$\text{level}_{{\mathcal T}}^{A}C=n$ for some $n$. 
Then a triangular inequality (Lemma \ref{lem:thick}) implies that 
\[
\text{level}_{{\mathcal T}}^{A}M \leq 
\text{level}_{{\mathcal T}}^{A}C\cdot \text{level}_{{\mathcal T}}^{C}M \leq n(\dim \T^c +1) 
\]
for any $M$ in $\T^c$. As mentioned above, we have an indecomposable object $Z$ 
in ${\mathcal T}^c$ with $\text{level}_{{\mathcal T}}^{A}Z > n(\dim \T^c +1)$, which is a contradiction. 
\end{rem}

One of our main theorems reveals a remarkable relationship 
between the two kinds of levels. 

\begin{thm}\label{thm:main}
Let $B$ be a simply-connected space and $f : X \to B$ an object in 
$\mathcal{TOP}_B$. Then 
one has (in)equalities 

\medskip
\noindent
$
(1) \ \ \ \ \dim H_*(X; \K) 
\geq \text{\em level}_{\DD(C_*(\Omega B))}^{\ C_*(\Omega B)}(C_*(F_f))
= \text{\em level}_{\DD(C^*(B))}^{\ \K}(C^*(X))$ \ \text{and} \

\medskip
\noindent
$
(2) \ \ \ \ \dim H^*(F_f; \K) 
\geq \text{\em level}_{\DD(C^*(B))}^{\ C^*(B)}(C^*(X))
= \text{\em level}_{\DD(C_*(\Omega B))}^{\ \K}(C_*(F_f)). 
$
\end{thm}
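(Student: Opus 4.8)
\textbf{Proof proposal for Theorem \ref{thm:main}.}

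The plan is to deduce the equalities and the inequalities from an appropriate version of Koszul-type duality connecting the derived category $\D(C^*(B))$ of the cochain algebra with the (co)derived category of the chain coalgebra on $B$, and then to transport the $C_*(\Omega B)$-module structure on $C_*(F_f)$ and the $C^*(B)$-module structure on $C^*(X)$ through the resulting equivalences. Concretely, I would first invoke Theorem \ref{thm:diagram}: taking the singular chain coalgebra $C_*(B)$ as the input coalgebra, one obtains a commutative diagram of exact functors relating $\D(C^*(B))$, $\D(C_*(\Omega B))$, and the relevant coderived category. Under this diagram the cobar-type construction carries $C_*(F_f)$, as a DG $C_*(\Omega B)$-module, to an object of $\D(C^*(B))$ quasi-isomorphic to $C^*(X)$ (the classical Eilenberg--Moore / Adams cobar comparison, using simple-connectivity of $B$ to guarantee convergence), and symmetrically. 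Since an exact equivalence of triangulated categories preserves thickenings and sends a generator to a generator, it preserves the $C$-level of every object; this is what yields the two equalities once we identify which generator corresponds to which. The key bookkeeping point is that the generator $C_*(\Omega B)$ of $\D(C_*(\Omega B))$ corresponds, under the duality, to the trivial module $\K$ in $\D(C^*(B))$ (cobar of the free module is acyclic except for $\K$), and dually $C^*(B)$ corresponds to $\K$ in $\D(C_*(\Omega B))$; this accounts for the asymmetry between the two superscripts in each of (1) and (2), i.e. why the $C_*(\Omega B)$-level equals the $\K$-level on the cochain side and vice versa.

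For the inequalities, I would argue on the side where the generating object is $\K$. In $\D(C^*(B))$ the object $\K$ has a Koszul/bar resolution, so building any bounded DG module $M$ out of $\K$ via triangles can be done in at most $\dim H^*(M)$ steps: more precisely, using the Postnikov/cellular filtration of $M$ by its homology, each nonzero homology group contributes one "sphere" built from finitely many shifts of $\K$, so $\mathrm{level}^{\K}_{\D(C^*(B))}(M) \le \dim H^*(M)$ whenever $H^*(M)$ is finite-dimensional. Applying this with $M = C^*(X)$, whose cohomology is $H^*(X)$, gives the inequality in (1) after using the already-established equality $\mathrm{level}^{C_*(\Omega B)}_{\D(C_*(\Omega B))}(C_*(F_f)) = \mathrm{level}^{\K}_{\D(C^*(B))}(C^*(X))$; applying it with $M = C_*(F_f)$, whose homology is $H_*(F_f) \cong H^*(F_f)$ in the locally finite case, gives the inequality in (2) via the equality in (2). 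The only care needed is to make sure the relevant complex is in the bounded (or bounded-below with finite total homology) range so that the cellular-filtration argument terminates, which follows from the standing local-finiteness and connectivity hypotheses together with simple-connectivity of $B$.

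The main obstacle I anticipate is not the level/thickening formalism but the precise matching of module structures through the duality functors: one must verify that the $C_*(\Omega B)$-module $C_*(F_f)$ coming from the holonomy action and the $C^*(B)$-module $C^*(X)$ coming from $C^*(f)$ are genuinely interchanged by the equivalences in Theorem \ref{thm:diagram}, compatibly with the identification of generators, rather than merely being related after forgetting structure. This is exactly the content that makes "duality on the levels" a theorem rather than a formal consequence, and I expect the proof to spend most of its effort checking the commutativity of the square
\[
\begin{array}{ccc}
\mathcal{TOP}_B & \longrightarrow & \D(C_*(\Omega B)) \\
\big\downarrow & & \big\downarrow \\
\D(C^*(B)) & \longrightarrow & \D(C^*(B))
\end{array}
\]
on the two functors $f \mapsto C_*(F_f)$ and $f \mapsto C^*(X)$, up to natural isomorphism, so that Theorem \ref{thm:diagram} can be applied objectwise. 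Once that compatibility is in hand, the equalities are immediate from invariance of levels under exact equivalences, and the inequalities follow from the resolution bound above.
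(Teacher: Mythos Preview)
Your overall strategy matches the paper's: invoke Theorem~\ref{thm:diagram} together with the compatibility square you anticipate (this is exactly Proposition~\ref{prop:top}) for the equalities, and treat the inequalities separately. Two points deserve comment.

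First, a genuine subtlety in your argument for the equalities: you rely on an \emph{exact equivalence} between $\D(C_*(\Omega B))$ and $\D(C^*(B))$, but the paper does not produce one. The functor $tD$ in Theorem~\ref{thm:diagram} involves vector-space dualization and the passage from a coderived to a derived category, neither of which is an equivalence in general (cf.\ Remark~\ref{rem:coalgebra}). What the paper actually does is construct \emph{two} exact functors, one in each direction, coming from the two diagrams (i) and (ii) of Theorem~\ref{thm:diagram}; each yields one inequality via Lemma~\ref{lem:2} (exact functors do not increase level), and the two inequalities combine to give the equality. Your bookkeeping of which generator goes to which is correct (this is Remark~\ref{rem:1}), but the mechanism is ``two one-sided bounds'' rather than ``one equivalence''.

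Second, your route to the inequalities differs from the paper's. You bound $\text{level}_{\D(A)}^{\,\K}(M)\le\dim H(M)$ directly by a Postnikov/cellular filtration of $M$, and then transfer through the equality. The paper instead works on the other side: it applies Lemma~\ref{lem:1} (a minimal-semifree-resolution estimate) to get $\text{level}_{\D(A)}^{\,A}(M)\le\dim H(M\otimes_A^L\K)$, and then identifies $H(C_*(F_f)\otimes_{C_*(\Omega B)}^L\K)\cong H_*(X)$ via the bar resolution and $H(C^*(X)\otimes_{C^*(B)}^L\K)\cong H^*(F_f)$ via the Eilenberg--Moore map. Your approach is valid in principle, but the ``Postnikov filtration by homology'' is not entirely innocent for $A$-modules (the naive good truncation is not an $A$-submodule when $A$ is merely non-negative); one needs the standard $t$-structure on $\D(A)$ for connective $A$, or an equivalent device. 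The paper's Lemma~\ref{lem:1} sidesteps this and has the further advantage of proving the inequalities independently of the equalities.
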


As mentioned in the Introduction, the theorem is deduced from    
a correspondence between the triangulated categories 
$\D(C_*(\Omega B))$ and $\D(C^*(B))$, which
is a variant of Koszul duality for DG algebras; 
see Theorem \ref{thm:diagram}, Proposition \ref{prop:top}  
and Theorem \ref{thm:formal_DGA}. 
More precisely, we deduce 
the results by means of exact functors between 
the triangulated categories which are compatible
with the covariant functor $C_*(F_{(\text{-})})$ and the contravariant functor 
$C^*(s( \text{-}))$. These would allow us to call the equalities in Theorem \ref{thm:main} duality on the (co)chain type levels. 
Algebraic versions of the equalities above deserve mention. They appear in Remark \ref{rem:algebraic_versions}. 

We here describe another evidence that the equalities in Theorem \ref{thm:main},  
which are topological versions, exhibit the duality.  
By definition, the homotopy fibre $F_f$ for a given map $f : X \to B$ fits into a sequence
\[
\xymatrix@C30pt@R15pt{
\Omega B \ar[r]^i & F_f \ar[r]^p & X \ar[r]^f & B
}
\]
in which $p$ is a fibration with $\Omega B$ the fibre. 
We observe that the maps $i$ and $f$ give the chain $C_*(F_f)$ and the cochain $C^*(X)$ a $C_*(\Omega B)$-module structure and 
a $C^*(B)$-module structure, respectively. Since the map $p$ connects those maps $i$ and $f$, 
it seems that (in)equalities in Theorem \ref{thm:main} reflect  homological duality of the fibration in some sense. 
In fact, the Eilenberg-Moore type 
quasi-isomorphism relative to a fibration \cite{FHT, FHT2} 
is an important ingredient for proving the main theorem; 
see Proposition \ref{prop:top}.

Theorem \ref{thm:main} and a {\it triangular inequality} on the
levels (Lemma \ref{lem:thick}) allow us to compare the (co)chain type
levels of maps. 

\begin{prop} \label{prop:L} Under the same assumption as in Theorem
 \ref{thm:main}, one has inequalities 
\begin{eqnarray*}
\text{\em level}_{\DD(C_*(\Omega B))}^{\ C_*(\Omega B)}(C_*(F_f))
&\leq& 
\text{\em level}_{\DD(C_*(\Omega B))}^{\ C_*(\Omega B)}(\K)\cdot
\text{\em level}_{\DD(C^*(B))}^{\ C^*(B)}(C^*(X))\\
&\leq& 
\dim H_*(B; \K)\cdot \text{\em level}_{\DD(C^*(B))}^{\ C^*(B)}(C^*(X))
\  \ \ \ \text{and} \
\end{eqnarray*}
\begin{eqnarray*}
\text{\em level}_{\DD(C^*(B))}^{\ C^*(B)}(C^*(X)) 
&\leq&   
\text{\em level}_{\DD(C^*(B))}^{\ C^*(B)}(\K)\cdot 
\text{\em level}_{\DD(C_*(\Omega B))}^{\ C_*(\Omega B)}(C_*(F_f)) \\
&\leq& 
\dim H^*(\Omega B; \K)\cdot 
\text{\em level}_{\DD(C_*(\Omega B))}^{\ C_*(\Omega B)}(C_*(F_f)). 
\  \ \  \text{}
\end{eqnarray*} 
\end{prop}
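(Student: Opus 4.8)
The plan is to derive Proposition \ref{prop:L} as a direct formal consequence of Theorem \ref{thm:main} together with the triangular inequality on levels. Recall (Lemma \ref{lem:thick}) that for any triangulated category $\T$ with objects $C$, $C'$, $M$ one has
\[
\text{level}_{\T}^{C'}(M) \leq \text{level}_{\T}^{C}(C') \cdot \text{level}_{\T}^{C}(M).
\]
Wait --- I should be careful about the direction of this inequality. The standard form is $\text{level}_{\T}^{C''}(M) \leq \text{level}_{\T}^{C'}(M)\cdot \text{level}_{\T}^{C''}(C')$, obtained by replacing each copy of $C'$ in a filtration of $M$ by a filtration of $C'$ built from $C''$. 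I would apply this with $C'' = C_*(\Omega B)$ (or $C^*(B)$), $C' = \K$, and $M = C_*(F_f)$ (or $C^*(X)$), noting that $\K$ is itself an object of the relevant derived category via the augmentation.

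First I would establish the first chain of inequalities. By the triangular inequality applied in $\D(C_*(\Omega B))$ with generator $C_*(\Omega B)$, intermediate object $\K$, and target $C_*(F_f)$, I get
\[
\text{level}_{\D(C_*(\Omega B))}^{C_*(\Omega B)}(C_*(F_f)) \leq \text{level}_{\D(C_*(\Omega B))}^{\K}(C_*(F_f)) \cdot \text{level}_{\D(C_*(\Omega B))}^{C_*(\Omega B)}(\K).
\]
Now Theorem \ref{thm:main}(2) gives the equality $\text{level}_{\D(C_*(\Omega B))}^{\K}(C_*(F_f)) = \text{level}_{\D(C^*(B))}^{C^*(B)}(C^*(X))$, which I substitute into the right-hand side to obtain the first stated inequality. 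For the second, I need $\text{level}_{\D(C_*(\Omega B))}^{C_*(\Omega B)}(\K) \leq \dim H_*(B;\K)$; but $H_*(\Omega B)$-wait, more directly, $\K$ as a $C_*(\Omega B)$-module is computed by a bar-type resolution and its level is bounded by the total dimension of $H_*(B;\K)$ --- this is essentially Theorem \ref{thm:main}(1) applied to a suitable map, or a standard bar-length estimate; in fact taking $f$ to be the identity $B \to B$, the homotopy fibre is contractible, $C_*(F_f)\simeq \K$, and part (1) reads $\dim H_*(\mathrm{pt}) \geq \text{level}^{C_*(\Omega B)}(\K)$, which is not quite it. Instead I would invoke the bound directly: $\text{level}_{\D(C_*(\Omega B))}^{C_*(\Omega B)}(\K) \leq \dim H^*(B;\K)$ via the Eilenberg--Moore / bar resolution of $\K$ over $C_*(\Omega B)$, whose length is controlled by $\dim H_*(B;\K) = \dim H^*(B;\K)$ in the simply-connected locally finite case. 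The second chain of inequalities is entirely symmetric: apply the triangular inequality in $\D(C^*(B))$ with generator $C^*(B)$, intermediate $\K$, target $C^*(X)$, then use Theorem \ref{thm:main}(1) for the equality $\text{level}_{\D(C^*(B))}^{\K}(C^*(X)) = \text{level}_{\D(C_*(\Omega B))}^{C_*(\Omega B)}(C_*(F_f))$, and finally bound $\text{level}_{\D(C^*(B))}^{C^*(B)}(\K) \leq \dim H^*(\Omega B;\K)$ by the dual bar resolution of $\K$ over $C^*(B)$.

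The routine part is the bookkeeping of which generator plays which role in each application of Lemma \ref{lem:thick}. The one genuine point requiring care --- the main (if modest) obstacle --- is justifying the two estimates $\text{level}^{C_*(\Omega B)}(\K) \leq \dim H_*(B;\K)$ and $\text{level}^{C^*(B)}(\K) \leq \dim H^*(\Omega B;\K)$. These follow because the (normalized, reduced) bar construction resolving $\K$ over $C_*(\Omega B)$ --- equivalently the cobar or Eilenberg--Moore model --- has homology $H^*(B;\K)$, and a standard argument (each homogeneous generator contributing a triangle, retracts absorbing the rest) shows the level is at most the total $\K$-dimension of that homology; dually for $C^*(B)$ and $H^*(\Omega B;\K)$. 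Since the coefficient field and connectivity hypotheses guarantee local finiteness, these dimensions are finite and the estimates make sense. I would phrase this as a short lemma or simply cite the relevant bar-length bound from \cite{ABIM} or \cite{K4}, and then the proposition is immediate.
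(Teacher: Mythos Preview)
Your overall strategy is correct and matches the paper's: combine the triangular inequality (Lemma~\ref{lem:thick}) with the equalities of Theorem~\ref{thm:main}. However, you misread Theorem~\ref{thm:main}(1) when you tried to apply it to $f=\mathrm{id}_B$. The quantity on the left is $\dim H_*(X;\K)$ where $X$ is the \emph{source} of $f$, not the fibre; for $f=\mathrm{id}_B$ one has $X=B$ and $F_f\simeq *$, so the theorem reads precisely
\[
\dim H_*(B;\K)\ \geq\ \text{level}_{\D(C_*(\Omega B))}^{\,C_*(\Omega B)}(\K),
\]
which is exactly the bound you said you needed. Dually, applying Theorem~\ref{thm:main}(2) to the constant map $*\to B$ (whose homotopy fibre is $\Omega B$ and whose source is a point) yields $\dim H^*(\Omega B;\K)\geq \text{level}_{\D(C^*(B))}^{\,C^*(B)}(\K)$. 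So the ``one genuine point requiring care'' is in fact already contained in Theorem~\ref{thm:main}; your fallback bar-resolution argument is a reinvention of the inequality half of that theorem (essentially Lemma~\ref{lem:1}) and is unnecessary.

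A cosmetic difference from the paper: the paper first invokes the equality in Theorem~\ref{thm:main}(1) to pass to $\text{level}_{\D(C^*(B))}^{\,\K}(C^*(X))$, applies Lemma~\ref{lem:thick} \emph{in} $\D(C^*(B))$ to bound this by $\text{level}_{\D(C^*(B))}^{\,\K}(C^*(B))\cdot\text{level}_{\D(C^*(B))}^{\,C^*(B)}(C^*(X))$, and then uses Theorem~\ref{thm:main}(1) again with $f=\mathrm{id}_B$ both to identify $\text{level}_{\D(C^*(B))}^{\,\K}(C^*(B))$ with $\text{level}_{\D(C_*(\Omega B))}^{\,C_*(\Omega B)}(\K)$ and to bound it by $\dim H_*(B;\K)$. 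Your route---applying Lemma~\ref{lem:thick} in $\D(C_*(\Omega B))$ and then substituting via Theorem~\ref{thm:main}(2)---is equally valid; the two differ only in which category hosts the triangular inequality.
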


As a corollary of Theorem \ref{thm:main}, we have 
criteria for the levels of maps to be finite. 
Let $M$ be an object of the triangulated category $\D(R)$ 
of DG-modules over a DG algebra $R$. 
Then it is immediate that $\dim H(M) < \infty$ if 
$\text{level}_{\D(R)}^{\ \K} M < \infty$.   
Thus we have the following result.

\begin{cor}\label{cor:finite}
Let $f : X \to B$ be a map with $B$ simply-connected. 

\medskip
\noindent
{\em (1)} \  $\text{\em level}_{\DD(C_*(\Omega B))}^{\ C_*(\Omega B)}(C_*(F_f))$
 is finite if and only if so is $\dim H_*(X; \K)$. \\
{\em (2)} \  $\text{\em level}_{\DD(C^*(B))}^{\ C^*(B)}(C^*(X))$
 is finite if and only if so is $\dim H^*(F_f; \K)$.
\end{cor}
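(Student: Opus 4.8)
The plan is to obtain the corollary as a formal consequence of Theorem \ref{thm:main} together with the elementary remark recorded just above its statement: for a DG algebra $R$ and an object $M$ of $\DD(R)$, if $\text{level}_{\DD(R)}^{\ \K} M < \infty$ then $\dim H(M) < \infty$, because $\K$ generates a thick subcategory all of whose objects have finite-dimensional total homology. I would also use the general fact that, for field coefficients, the graded vector spaces $H_*(Z;\K)$ and $H^*(Z;\K)$ are degreewise dual, so that $\dim H_*(Z;\K)<\infty$ if and only if $\dim H^*(Z;\K)<\infty$.

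First I would prove (1). For the ``if'' implication, assume $\dim H_*(X;\K)<\infty$; the first inequality in Theorem \ref{thm:main}(1) gives
\[
\text{level}_{\DD(C_*(\Omega B))}^{\ C_*(\Omega B)}(C_*(F_f)) \;\leq\; \dim H_*(X;\K) \;<\; \infty .
\]
For the converse, suppose the left-hand level is finite. The equality in Theorem \ref{thm:main}(1) then yields $\text{level}_{\DD(C^*(B))}^{\ \K}(C^*(X))<\infty$, and applying the remark above with $R=C^*(B)$ and $M=C^*(X)$ gives $\dim H^*(X;\K)=\dim H(C^*(X))<\infty$, hence $\dim H_*(X;\K)<\infty$.

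The proof of (2) is the mirror image, using Theorem \ref{thm:main}(2) in place of (1). For ``if'', assume $\dim H^*(F_f;\K)<\infty$; then
\[
\text{level}_{\DD(C^*(B))}^{\ C^*(B)}(C^*(X)) \;\leq\; \dim H^*(F_f;\K) \;<\; \infty .
\]
For ``only if'', finiteness of this cochain type level is transported by the equality in Theorem \ref{thm:main}(2) to $\text{level}_{\DD(C_*(\Omega B))}^{\ \K}(C_*(F_f))<\infty$, whence the remark applied with $R=C_*(\Omega B)$ and $M=C_*(F_f)$ gives $\dim H_*(F_f;\K)=\dim H(C_*(F_f))<\infty$, and therefore $\dim H^*(F_f;\K)<\infty$.

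There is essentially no obstacle once Theorem \ref{thm:main} is in hand: the corollary is a bookkeeping argument combining the displayed (in)equalities with the observation relating finiteness of the $\K$-level to finiteness of total homology. The only step meriting a word of care is the passage between homology and cohomology total dimensions, which is immediate for field coefficients.
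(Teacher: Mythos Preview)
Your proof is correct and follows exactly the approach the paper intends: the paper merely states the observation that $\text{level}_{\D(R)}^{\ \K} M < \infty$ implies $\dim H(M)<\infty$ and declares the corollary to be an immediate consequence of this together with Theorem~\ref{thm:main}. You have simply written out in full the bookkeeping that the paper leaves implicit, including the harmless passage between $H_*$ and $H^*$ over a field.
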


This corollary is essentially a special case of \cite[Proposition 2.3]{F-J}; see
also \cite[Theorem 4.8]{ABIM} for other equivalence conditions 
for the level to be finite.   

Let $X$ be a simply-connected rational space. 
The result \cite[Corollary 2.9]{K5} states that  
\[
\text{cat}X \leq 
\text{level}_{\D(C_*(\Omega X))}^{\ C_*(\Omega X)}\Q - 1, 
\] 
where $\text{cat}X$ stands for the L.-S. category of $X$. 
Moreover, a simple calculation in  \cite[Example 6.4]{K5} enables us
to conclude that if $X$ is a
simply-connected rational H-space with $\dim H^*(X; \Q) < \infty$, then 
the above inequality turns out to be the equality.  On the other hand, 
the inequality can be strict as we will see below. 

\begin{ex}
Let $X$ be an infinite wedge of spheres of the form 
$\bigvee_\alpha S^{n_\alpha}$. 
Then $\text{cat}X_\Q = \text{cat} X =1$. By applying Corollary
 \ref{cor:finite} (1) to the case where $id_X : X \to X$, 
we see that 
$\text{level}_{\D(C_*(\Omega X))}^{\ C_*(\Omega X)}\Q = \infty$. In fact,
$H_*(X; \Q)$ is of infinite dimension. 
\end{ex}
  
The following proposition, which is derived from Corollary \ref{cor:finite} (2) 
and the totally fibred square construction \cite[Section 3]{Nei}, is of interest to us. 
Indeed, the result suggests that the study of the levels for maps contributes to 
determining the homotopy types of spaces. 

\begin{prop} \label{prop:section} Let $\pi : X \to B$ be a map between
 simply-connected spaces with a right homotopy inverse $s : B \to X$. 
 We regard the map $\pi$ and $s$ as objects in $\mathcal{TOP}_B$ and 
 $\mathcal{TOP}_X$, respectively.  Then both of levels 
$\text{\em level}_{\DD(C^*(B; \K))}^{\ C^*(B; \K)}C^*(X; \K)$ and 
$\text{\em level}_{\DD(C^*(X; \K))}^{\ C^*(X; \K)}C^*(B; \K)$ are finite 
if and only if $H^*(F_\pi; \K)=\K$. In particular, the both of the two
 levels with coefficients in  ${\mathbb Z}/p$ are finite if and only if 
 $\pi : X \to B$ is a homotopy equivalence after $p$-completion. 
\end{prop}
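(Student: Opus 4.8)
The plan is to deduce Proposition \ref{prop:section} from Corollary \ref{cor:finite}(2) by analyzing the homotopy fibres that arise on both sides of the two maps $\pi$ and $s$. First I would apply Corollary \ref{cor:finite}(2) to $\pi : X \to B$, viewed as an object of $\mathcal{TOP}_B$: the level $\text{level}_{\DD(C^*(B))}^{\ C^*(B)}C^*(X)$ is finite if and only if $\dim H^*(F_\pi; \K) < \infty$. Similarly, applying Corollary \ref{cor:finite}(2) to $s : B \to X$ as an object of $\mathcal{TOP}_X$, the level $\text{level}_{\DD(C^*(X))}^{\ C^*(X)}C^*(B)$ is finite if and only if $\dim H^*(F_s; \K) < \infty$. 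So the statement is really about comparing the two homotopy fibres $F_\pi$ and $F_s$, and the hypothesis that $s$ is a right homotopy inverse of $\pi$ (i.e. $\pi \circ s \simeq \mathrm{id}_B$) must force a tight relationship between them.

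The key geometric input is the totally fibred square construction of Neisendorfer \cite[Section 3]{Nei}. Given the retract data $B \xrightarrow{s} X \xrightarrow{\pi} B$ with $\pi s \simeq \mathrm{id}_B$, I would build the totally fibred square associated with $\pi$ and $s$; this produces a commuting square in which both the horizontal and the vertical maps are fibrations and the induced map on fibres is again a fibration. Concretely, one obtains a fibration sequence relating $F_\pi$, $F_s$ and the fibre of the map $F_s \to F_{\mathrm{id}_B} \simeq \ast$ (or, dually, $\Omega F_\pi$ should appear as a fibre). The upshot I expect from chasing this square is that $F_s$ is homotopy equivalent to $\Omega F_\pi$, or more precisely that there is a fibration $F_\pi \to PB \to B$-type comparison showing $F_s \simeq \Omega F_\pi$ up to the relevant connectivity. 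Hence $H^*(F_\pi; \K) = \K$ forces $F_\pi \simeq \ast$, which forces $F_s \simeq \ast$; and conversely if both $\dim H^*(F_\pi;\K)$ and $\dim H^*(F_s;\K) = \dim H^*(\Omega F_\pi; \K)$ are finite, a standard argument (loop space homology of a simply-connected space with positive-degree cohomology grows, e.g. via the loop space fibration and the fact that $H_*(\Omega Y;\K)$ is infinite-dimensional whenever $\widetilde{H}^*(Y;\K) \ne 0$ and $Y$ is simply connected) shows $H^*(F_\pi; \K) = \K$. Combining this dichotomy with the two applications of Corollary \ref{cor:finite}(2) yields the claimed equivalence: both levels are finite $\iff$ $H^*(F_\pi;\K) = \K$.

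For the final clause about $\mathbb{Z}/p$-coefficients, I would invoke the mod-$p$ Whitehead-type theorem: since $X$ and $B$ are simply-connected, the fibration $F_\pi \to X \xrightarrow{\pi} B$ has $H^*(F_\pi; \mathbb{Z}/p) = \mathbb{Z}/p$ if and only if $\pi$ induces an isomorphism on mod-$p$ homology, which (again by simple-connectivity) is equivalent to $\pi$ being a homotopy equivalence after $p$-completion. Chaining this with the first part of the proposition gives the stated characterization.

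The main obstacle I anticipate is establishing the precise homotopy relationship between $F_\pi$ and $F_s$ via the totally fibred square — making rigorous the claim that $F_s \simeq \Omega F_\pi$ (or whatever the correct shift is) and verifying that this is exactly what Neisendorfer's construction delivers in the presence of the section. A secondary, more routine point is the homological argument that $\dim H^*(\Omega F_\pi; \K) < \infty$ together with $\dim H^*(F_\pi;\K)<\infty$ forces $F_\pi$ to be $\K$-acyclic; this uses the loop space fibration and the growth of loop space homology, and should follow from standard Eilenberg–Moore or minimal-model considerations, but needs to be stated carefully since $\K$ has arbitrary characteristic.
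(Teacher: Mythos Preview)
Your proposal is correct and follows essentially the same route as the paper: apply Corollary~\ref{cor:finite}(2) to each of $\pi$ and $s$, use Neisendorfer's totally fibred square to identify the homotopy fibre of $s$ with $\Omega F_\pi$ (the paper produces a fibration $\Omega F_\pi \to B' \to X'$ with $B'\simeq B$, $X'\simeq X$), and then invoke the Leray--Serre spectral sequence of the path-loop fibration $\Omega F_\pi \to PF_\pi \to F_\pi$ to conclude that both $H^*(F_\pi;\K)$ and $H^*(\Omega F_\pi;\K)$ are finite-dimensional iff $H^*(F_\pi;\K)=\K$. The paper also notes, as you should, that $F_\pi$ is simply-connected because $\pi$ admits a section, which is needed for that last dichotomy.
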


Let $q : X \to B$ be a trivial fibration with $H^*(F_q; \K)$ finite
dimensional. Then the
$C^*(B;\K)$-level of $C^*(X;\K)$ in $\D(C^*(B;\K))$ is just one for any
field $\K$ in general. In fact, we see that 
$C^*(X; \K)\cong C^*(B; \K)\otimes H^*(F_q; \K)$ in 
$\D(C^*(B; \K))$. 
This implies that $C^*(X; \K)$ is a coproduct of shifts of $C^*(B; \K)$
and hence $C^*(X; \K)$ is in 
${\tt thick}^1_{D(C^*(B; \K))}(C^*(B; \K))$. 
On the other hand, 
for a spherical fibration $S^l \to X \to B$, 
we obtain a characterization for the $C^*(B;\K)$-level of $C^*(X;\K)$ to
be two; see Proposition \ref{prop:level_2}. Combining the result 
with Proposition \ref{prop:L}, we have the following proposition.  

\begin{prop}\label{prop:level_2^n}
Let $B$ be a simply-connected space. Suppose that there exists a
 sequence of fibrations
\[
F_1 \longrightarrow X_1 \stackrel{p_1}{\longrightarrow} B, \ F_2 \longrightarrow X_2 \stackrel{p_2}{\longrightarrow} X_1,
 \ ..., \  F_n \longrightarrow X_n \stackrel{p_n}{\longrightarrow} X_{n-1}
\]
in which $X_i$ is simply-connected for $1 \leq i < n$ and 
$H^*(F_i; \K)\cong H^*(S^{n_i}; \K)$ for some $n_i$. 
Then one has inequalities 
\[
\text{\em level}_{\DD(C^*(B; \K))}^{\ C^*(B; \K)}C^*(X_n; \K)\leq 2^n \
 \ 
\text{and}   
\]
\[ 
\text{\em level}_{\DD(C_*(\Omega B; \K))}^{\ C_*(\Omega B; \K)}
C_*(F_f; \K)\leq 2^n\cdot  \dim H_*(B; \K),
\]
where $f = p_n \circ \cdots \circ p_1$. 
\end{prop}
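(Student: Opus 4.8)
The plan is to build the tower of fibrations inductively, controlling the cochain type level at each stage via the spherical fibration estimate (Proposition~\ref{prop:level_2}) and the triangular inequality (Lemma~\ref{lem:thick}). Write $q_i = p_1\circ\cdots\circ p_i : X_i \to B$, so that $q_0 = \mathrm{id}_B$ and $q_n = f$. At stage $i$ the fibration $F_i \to X_i \xrightarrow{p_i} X_{i-1}$ has fibre with the $\K$-cohomology of a sphere $S^{n_i}$, and $X_{i-1}$ is simply-connected, so Proposition~\ref{prop:level_2} gives $\text{level}_{\D(C^*(X_{i-1}))}^{\,C^*(X_{i-1})}C^*(X_i)\le 2$. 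I would then transport this estimate over $B$: since $q_{i-1} : X_{i-1}\to B$ is a map of simply-connected spaces, applying the functor $C^*(s(-))$ to the morphism $X_i \to X_{i-1}$ in $\mathcal{TOP}_B$ and using the multiplicativity of levels under restriction of scalars along $C^*(B)\to C^*(X_{i-1})$ — which is exactly the content of the triangular inequality, because $C^*(X_{i-1})$ is built from $C^*(B)$ — yields
\[
\text{level}_{\D(C^*(B))}^{\,C^*(B)}C^*(X_i)\le \text{level}_{\D(C^*(B))}^{\,C^*(B)}C^*(X_{i-1})\cdot \text{level}_{\D(C^*(X_{i-1}))}^{\,C^*(X_{i-1})}C^*(X_i)\le 2\cdot 2^{\,i-1}=2^i,
\]
which closes the induction and gives the first inequality at $i=n$.

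For the second inequality I would feed the bound $\text{level}_{\D(C^*(B))}^{\,C^*(B)}C^*(X_n)\le 2^n$ into the first chain of inequalities in Proposition~\ref{prop:L}, applied to the object $f = q_n : X_n \to B$ of $\mathcal{TOP}_B$. That proposition gives
\[
\text{level}_{\D(C_*(\Omega B))}^{\,C_*(\Omega B)}C_*(F_f)\le \dim H_*(B;\K)\cdot \text{level}_{\D(C^*(B))}^{\,C^*(B)}C^*(X_n)\le 2^n\cdot\dim H_*(B;\K),
\]
as required. Here one should observe that the homotopy fibre $F_f$ of the composite $f$ is indeed the one occurring in the definition of the chain type level, so no extra identification is needed.

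The one genuine subtlety — and the step I expect to require the most care — is the inductive transport of the spherical-fibration level estimate from $\D(C^*(X_{i-1}))$ to $\D(C^*(B))$. One must check that restriction of scalars along $C^*(q_{i-1}) : C^*(B)\to C^*(X_{i-1})$ sends $\text{thick}^1_{\D(C^*(X_{i-1}))}(C^*(X_{i-1}))$ into $\text{thick}^{\,m}_{\D(C^*(B))}(C^*(B))$ with $m = \text{level}_{\D(C^*(B))}^{\,C^*(B)}C^*(X_{i-1})$, and more generally respects the thickening filtration up to the expected multiplicative factor; this is precisely the triangular inequality of Lemma~\ref{lem:thick}, but it has to be invoked for the restriction functor rather than for a single object, so one should be explicit that $C^*(X_i)$, viewed as a $C^*(B)$-module via $C^*(q_i)$, is obtained from $C^*(X_i)$ as a $C^*(X_{i-1})$-module by this restriction. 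A minor auxiliary point is that the coefficient field is arbitrary throughout, so Proposition~\ref{prop:level_2} must be available (as stated in the excerpt it is), and the simple-connectivity hypotheses on the $X_i$ for $1\le i<n$ are exactly what is needed so that each intermediate application of Propositions~\ref{prop:level_2} and~\ref{prop:L} is legitimate.
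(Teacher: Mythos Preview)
Your approach is correct and matches the paper's proof almost exactly: the paper also proceeds by induction, applying Proposition~\ref{prop:level_2} at each stage, combining via Lemma~\ref{lem:thick}, and then invoking Proposition~\ref{prop:L} for the second inequality. The ``transport'' step you flag is handled in the paper by first applying Lemma~\ref{lem:2} to the restriction-of-scalars functor $\alpha^*:\D(C^*(X_{i-1}))\to\D(C^*(B))$ (yielding $\text{level}_{\D(C^*(B))}^{\,C^*(X_{i-1})}C^*(X_i)\le 2$) and only then invoking Lemma~\ref{lem:thick} inside $\D(C^*(B))$ --- exactly the two-step mechanism you anticipated.
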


In rational case, the result \cite[Proposition 2.7]{K4} gives a
better estimate of $C^*(B; \Q)$-level of $C^*(X_n; \Q)$ 
than that of Proposition \ref{prop:level_2^n} provide each 
$n_i$ is odd. 

In order to describe another main theorem,  
we recall a numerical invariant for DG modules related to the level. Let $A$ be a DG algebra.  
We call a morphism $f: M \to N$ 
in the derived category $\D(A)$ 
a {\it ghost} if $H(f)=0$. 
An object $M$ in $\D(A)$ is said to have
{\it ghost length} $n$, denoted $\text{gh.len.} M = n$, if every composite 
\[
\xymatrix@C25pt@R25pt{
M \ar[r]^{f_1}&  Y_1  \ar[r]^{f_2} & \cdots 
\ar[r]^{f_{n+1}} & Y_{n+1}
}
\]
of $n+1$ ghosts is trivial in $\D(A)$, and there exists a composite of
$n$ ghosts from $M$ which is non trivial in $\D(A)$; see \cite{Ho-L}.

The ghost length of a DG module $M$  gives a lower bound of the level of $M$. 

\begin{prop} \cite[Lemma 6.7]{S} \cite[Proposition 7.5]{K5}
\label{prop:ghost}
For any $M \in \DD(A)$, one has 
\[
\text{\em gh.len.} M +1 \leq \text{\em level}_{\DD(A)}^A(M).  
\]
\end{prop}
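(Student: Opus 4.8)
The plan is to prove the inequality $\text{gh.len.}\, M + 1 \leq \text{level}^A_{\D(A)}(M)$ by showing that if $M$ lies in $\text{{\tt thick}}^n_{\D(A)}(A)$, then every composite of $n$ ghosts starting at $M$ is trivial; taking $n = \text{level}^A_{\D(A)}(M)$ then forces $\text{gh.len.}\, M \leq n - 1$. So the heart of the matter is the statement: if $M \in \text{{\tt thick}}^n_{\D(A)}(A)$ and $M \xrightarrow{f_1} Y_1 \xrightarrow{f_2} \cdots \xrightarrow{f_n} Y_n$ is a composite of $n$ ghosts, then $f_n \circ \cdots \circ f_1 = 0$. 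I would prove this by induction on $n$.

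For the base case $n = 1$, an object $M \in \text{{\tt thick}}^1_{\D(A)}(A)$ is built from $A$ by finite coproducts, shifts, and retracts, so $M$ is a retract of a finite coproduct $\bigoplus_i \Sigma^{d_i} A$. Since any morphism out of $A$ (or a shift of $A$) in $\D(A)$ that is a ghost is zero — because $\mathrm{Hom}_{\D(A)}(\Sigma^d A, Y) \cong H^{-d}(Y)$ naturally, so $H(f) = 0$ implies $f = 0$ — and this passes through finite coproducts and retracts, any single ghost out of $M$ vanishes.

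For the inductive step, suppose $M \in \text{{\tt thick}}^n_{\D(A)}(A)$. By definition $M$ is a retract of an object $M'$ sitting in a distinguished triangle $M_1 \to M' \to M_2 \to \Sigma M_1$ with $M_1 \in \text{{\tt thick}}^{n-1}_{\D(A)}(A)$ and $M_2 \in \text{{\tt thick}}^1_{\D(A)}(A)$. Given a composite of $n$ ghosts $M \xrightarrow{f_1} Y_1 \to \cdots \to Y_n$, it suffices to treat the case $M = M'$ (retracts are handled by pre-composing with the split inclusion $M \to M'$ and observing triviality is inherited). By the base case applied to $M_2$, the first ghost $f_1 : M' \to Y_1$ restricted along $M_2 \to M'[?]$ — more precisely, I would consider the composite $M_2 \to \Sigma M_1 \xrightarrow{\Sigma(\text{?})}$; the cleaner route is: since $f_1$ composed with $M_2 \to M' $ is a ghost out of an object of $\text{{\tt thick}}^1$, hence zero, $f_1$ factors through the connecting map, i.e. $f_1 = g_1 \circ \partial$ for some $g_1 : \Sigma M_1 \to Y_1$ where $\partial : M' \to \Sigma M_1$... one must be careful with the direction of the triangle. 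Rotating the triangle to $M_2 \to \Sigma M_1 \to \Sigma M' \to \Sigma M_2$, the map $\Sigma M_1 \to \Sigma M'$ is (a shift of) the map $M_1 \to M'$; since $f_1 \circ (M_2 \to M')$ is null (ghost out of $\text{{\tt thick}}^1$), $f_1$ lifts to $\bar f_1 : \Sigma^{-1}(\Sigma M_1) = M_1$... Let me instead phrase it as: the composite $M_2 \to M' \xrightarrow{f_1} Y_1$ is a ghost out of $M_2 \in \text{{\tt thick}}^1$, so it is zero; hence $f_1$ factors as $M' \to M_1' \xrightarrow{f_1'} Y_1$ where $M' \to M_1'$ is a morphism to a shift-corrected copy of $M_1 \in \text{{\tt thick}}^{n-1}$. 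Then $f_n \circ \cdots \circ f_2 \circ f_1' $ is a composite of $n-1$ ghosts out of $M_1'$ (note $f_1'$ itself is a ghost since $H(M' \to M_1')$ is epi or one adjusts, but more safely: $f_2 \circ f_1'$ may fail to be a ghost — so the induction should instead be set up to say the $(n{-}1)$-fold composite $f_n \cdots f_2$ is zero after restricting to $M_1'$, using that $f_2 \cdots f_n$ is a composite of $n-1$ ghosts and $M_1' \in \text{{\tt thick}}^{n-1}$). Thus $f_n \cdots f_2 \circ (M' \to M_1') = 0$, whence $f_n \cdots f_1 = f_n \cdots f_2 \circ f_1' \circ \dots$ — so $f_n \cdots f_1 = 0$, completing the induction.

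The main obstacle, and the point requiring the most care, is the bookkeeping of shifts and the direction of the connecting morphism in the triangle, and ensuring that "the $(n-1)$-fold composite of ghosts vanishes on objects of $\text{{\tt thick}}^{n-1}$" is exactly the inductive hypothesis one needs — rather than the a priori slightly different statement that the full $n$-fold composite from $M$ vanishes. Formulating the induction hypothesis as \emph{"for every $P \in \text{{\tt thick}}^{k}_{\D(A)}(A)$ and every composite of $k$ ghosts $g_1, \ldots, g_k$ starting at $P$, one has $g_k \circ \cdots \circ g_1 = 0$"} makes the retract and coproduct closure steps routine (a zero map remains zero after restriction along any morphism, and a finite coproduct of zero maps is zero), so the only genuine work is the triangle step, which is the standard "ghosts kill filtrations" argument; I expect this to go through cleanly once the shift conventions $(\Sigma M)^n = M^{n+1}$ and $\mathrm{Hom}_{\D(A)}(\Sigma^d A, Y) \cong H^{-d}(Y)$ are fixed.
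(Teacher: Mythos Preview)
The paper does not supply its own proof of this proposition; it is simply quoted from \cite[Lemma 6.7]{S} and \cite[Proposition 7.5]{K5}. Your overall strategy---proving by induction on $n$ that any composite of $n$ ghosts out of an object of $\text{{\tt thick}}^{n}_{\D(A)}(A)$ vanishes---is the standard one and is correct in outline. The base case is fine.

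The inductive step, however, has the arrows reversed. In the triangle $M_1 \xrightarrow{i} M' \xrightarrow{p} M_2 \to \Sigma M_1$ there is no morphism $M_2 \to M'$; the map goes the other way, and no rotation of the triangle produces one. The repair is to restrict along $i$ rather than along a nonexistent map from $M_2$: since $H(f_1)=0$, the composite $f_1\circ i : M_1 \to Y_1$ is again a ghost, so $f_{n-1}\circ\cdots\circ f_2\circ(f_1\circ i)$ is a composite of $n-1$ ghosts out of $M_1 \in \text{{\tt thick}}^{n-1}_{\D(A)}(A)$ and vanishes by the induction hypothesis. Thus $(f_{n-1}\circ\cdots\circ f_1)\circ i = 0$, and the exactness of the triangle yields a factorisation $f_{n-1}\circ\cdots\circ f_1 = h\circ p$ for some $h : M_2 \to Y_{n-1}$. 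Now $f_n\circ h : M_2 \to Y_n$ is a ghost out of $M_2 \in \text{{\tt thick}}^{1}_{\D(A)}(A)$, hence zero by the base case, and therefore $f_n\circ\cdots\circ f_1 = (f_n\circ h)\circ p = 0$. The retract step is then routine, as you note.

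So the only genuine slip is which leg of the triangle you restrict along: use $i : M_1 \to M'$ first and reserve the $\text{{\tt thick}}^{1}$ argument for the final ghost $f_n$, rather than trying to kill $f_1$ on $M_2$ at the outset.
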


Let $BG$ be the classifying space of a connected Lie group $G$.  Since the diagonal map $\Delta : G \to G \times G$ is a homomorphism, 
it induces a map $BG \to BG^{\times 2}$, which is regarded as the diagonal map $BG \to BG\times BG$ 
under a homotopy equivalence between $BG^{\times 2}$ and $BG \times BG$. 
We give an estimate for the cochain type level of the composite 
\[
\xymatrix@C25pt@R25pt{
\Delta^{(n-1)} : 
BG \ar[r]^(0.6){B\Delta} & BG^{\times 2}  \ar[r] & \cdots \ar[r]^{B(1\times \Delta)} & BG^{\times n} 
}
\]
by considering the ghostness of a shriek map associated with 
the map $B(1\times\Delta) : BG^{\times l} \to BG^{\times (l+1)}$; see \cite{F-T} and Section 5 for shriek maps on  
a Gorenstein space.

\begin{thm} \label{thm:ghost} Let $BG$ be the classifying space of a connected Lie group $G$ 
whose cohomology with coefficients in $\K$ 
is isomorphic to a polynomial algebra. Then in the derived category 
$\DD(C^*(BG^{\times n}))$, one has 
$$n-1 \leq \text{\em gh.len.} C^*(BG).$$ 
\end{thm}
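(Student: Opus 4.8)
The plan is to produce, for each $l$ with $1 \le l \le n-1$, a ghost map in $\D(C^*(BG^{\times n}))$ built from the shriek map associated with $B(1\times\Delta) : BG^{\times l} \to BG^{\times(l+1)}$, and then to show that the $(n-1)$-fold composite of these ghosts is nonzero in $\D(C^*(BG^{\times n}))$. Since a nonzero composite of $n-1$ ghosts forces $\text{gh.len.}\,C^*(BG) \ge n-1$ by the definition of ghost length, this will give the claim. Throughout I would use that $H^*(BG;\K)$ is a polynomial algebra, so $C^*(BG)$ is formal and $BG$ is a Gorenstein space of a controllable shifted dimension; this is what makes the shriek maps of F\'elix--Thomas \cite{F-T} available and computable on cohomology.

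First I would set up the module structures: via the iterated diagonal $\Delta^{(n-1)} : BG \to BG^{\times n}$ and the projections, each $C^*(BG^{\times l})$ becomes an object of $\D(C^*(BG^{\times n}))$, and one has $H^*(BG^{\times l};\K) \cong H^*(BG;\K)^{\otimes l}$, a polynomial algebra on $l$ times as many generators. The diagonal $B(1\times\Delta) : BG^{\times l}\to BG^{\times(l+1)}$ is a Gorenstein map (its homotopy fibre has the cohomology of $BG$, which is Gorenstein), so by \cite{F-T} there is a shriek map $\Delta_! : C^*(BG^{\times l}) \to \Sigma^{-d_l} C^*(BG^{\times(l+1)})$ in $\D(C^*(BG^{\times(l+1)}))$, hence — restricting scalars along the remaining projection $BG^{\times n}\to BG^{\times(l+1)}$ — in $\D(C^*(BG^{\times n}))$. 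The next step, and the technical heart, is to compute $H^*(\Delta_!)$: the map $B(1\times\Delta)^*$ on cohomology is the algebra map doubling the last block of polynomial generators, and the induced shriek map is, up to a unit, "multiplication by the relevant Euler-class-type element and projection", which on cohomology is \emph{zero} because the generators being identified are positively graded — so $\Delta_!$ is a ghost. I would make this precise by modelling $\Delta_!$ via the Gorenstein/torsion-functor description (as in \cite{K-M-N_1, K-M-N_2}) on the formal models, where the relevant $\operatorname{Ext}$ or torsion computation is a Koszul complex calculation over a polynomial ring.

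Then I would chain these: composing $\Delta_!$ for $l=1,\dots,n-1$ (each living in $\D(C^*(BG^{\times n}))$ after scalar restriction, with appropriate shifts) gives a composite of $n-1$ ghosts from $C^*(BG)$ to $\Sigma^{-D}C^*(BG^{\times n})$. To see this composite is \emph{nonzero} in $\D(C^*(BG^{\times n}))$ I would argue that the total composite is, up to a unit, the shriek map associated with $\Delta^{(n-1)}$ itself — the shriek construction being functorial/multiplicative for composites of Gorenstein maps — and that this total shriek map is nonzero because $BG^{\times n}$ is, via $\Delta^{(n-1)}$, a "relative Gorenstein" situation whose fundamental class survives: concretely, $H^*(\Delta^{(n-1)}_!)$ is multiplication by a nonzero element in a localized or torsion version of $H^*(BG^{\times n};\K)$, or more robustly, the composite is detected by a nonvanishing map on some $\operatorname{Ext}^{*}_{H^*(BG^{\times n})}(\K,-)$ after passing to formal models. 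The main obstacle is exactly this last point — proving the total composite does not vanish — since each individual factor is a ghost (zero on $H^*$), so nonvanishing of the composite cannot be read off cohomology directly and must come from a more rigid invariant (a secondary operation, the torsion-functor model, or the explicit minimal semifree resolution over the polynomial cochain algebra). I expect the polynomiality hypothesis on $H^*(BG;\K)$ to be used decisively here, as it reduces everything to Koszul duality for a symmetric algebra, where the iterated shriek map corresponds to a concrete nonzero element (a product of the generators being diagonalized) and the chain of ghosts is realized by the Koszul differential, whose $(n-1)$-fold "corner" is manifestly nonzero.
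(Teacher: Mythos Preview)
Your overall strategy matches the paper's: construct a ghost at each stage from the shriek map $B(1\times\Delta)^!$ and then show that the $(n-1)$-fold composite is nonzero. For ghostness the paper argues via the Leray-Serre spectral sequence of $G \to BG^{\times(k-1)} \to BG^{\times k}$: it proves (Claim~5.1) that the generators $s^{-1}x_i$ of $H^*(G)$ transgress to $x_i\otimes 1 - 1\otimes x_i$, so ${}_{LS}E_\infty^{0,*}=0$ for $*>0$ and hence integration along the fibre, which is $H^*(B(1\times\Delta)^!)$, vanishes. Your Euler-class sketch points in the same direction and could probably be made precise.

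The real divergence, and the gap in your proposal, is at the non-triviality of the composite. The paper does \emph{not} attack this algebraically. It pulls the diagonal $\Delta^{(n-1)}:BG\to BG^{\times n}$ back along the evaluation map $ev_{a_1,\dots,a_n}:LBG\to BG^{\times n}$ to obtain $\widetilde{\Delta}:LBG\times_{BG}\cdots\times_{BG}LBG\to LBG$, and uses that $\widetilde{\Delta}^!$ is an extension of $(\Delta^{(n-1)})^!$. An Eilenberg-Moore spectral sequence computation for this pullback then shows that the Leray-Serre spectral sequence of $G^{n-1}\to LBG\times_{BG}\cdots\times_{BG}LBG\to LBG$ collapses at $E_2$, so $H^*(\widetilde{\Delta}^!)$ is nonzero, hence so is the composite $B(1\times\Delta)^!\circ\cdots\circ B\Delta^!$. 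The author remarks explicitly that the key input is the non-triviality of the loop coproduct in string topology on $BG$ and that ``it is hard to expect an algebraic proof of the theorem.''

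Your Koszul-duality/formality route is therefore not just different but undeveloped at the decisive step. Note too that the hypothesis allows $H^*(BG;\K)$ to have odd-degree polynomial generators (e.g.\ $BSO(3)$ at $p=2$), so appeals to formality require care. Even granting formal models, you would still have to show that the composite of the individual shriek maps is a \emph{nonzero} multiple of the generator of the one-dimensional $\operatorname{Ext}$-group, and this is exactly what the paper's free-loop-space detour achieves: it detects the composite on ordinary cohomology \emph{after pullback to $LBG$}, a move unavailable over $BG^{\times n}$ itself precisely because the maps are ghosts there.
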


The assumption for a Lie group $G$ in Theorem \ref{thm:ghost} is satisfied for any field  $\K$ if the homology 
$H_*(G; {\mathbb Z})$ is torsion free. 
Moreover, the classical Lie groups $SO(n)$, $Spin(n)$ for $n\leq 9$, the exceptional Lie groups $G_2$ and $F_4$ 
satisfy the assumption in the case where the field $\K$ is of characteristic $2$ 
while the integral homology groups of these Lie groups have $2$-torsion; see \cite{M-T}.  

The proof of Theorem \ref{thm:ghost} uses the Leray-Serre and the Eilenberg-Moore spectral sequences. 
The key to the proof is the non-triviality of the loop coproduct in string topology on the classifying space of a Lie group  \cite{C-M, K-M_BG}. 
Therefore it is hard to expect an algebraic proof of the theorem. 

By Proposition \ref{prop:ghost} and Theorem  \ref{thm:ghost}, we have the following result.  

\begin{prop}\label{prop:shriek}
Under the same assumption as in Theorem \ref{thm:ghost}, 
\[
n \leq \text{\em level}_{\DD(C^*(BG^{\times n})}^{C^*(BG^{\times n})}(C^*(BG)) \leq (n-1) \dim QH^*(BG; \K)+1, 
\]
where $QH^*(BG; \K)$ stands for the vector space of indecomposable elements of the algebra $H^*(BG; \K)$. 
Assume further that $QH^*(BG; \K)^{2j+1}=0$ for $j\geq 0$. Then 
\[
\text{\em level}_{\DD(C^*(BG^{\times n}))}^{C^*(BG^{\times n})}(C^*(BG)) = 
 (n-1) \dim QH^*(BG; \K)+1. 
\]
\end{prop}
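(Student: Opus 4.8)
Write $\ell$ for $\text{level}_{\D(C^*(BG^{\times n}))}^{C^*(BG^{\times n})}(C^*(BG))$. The plan is to combine Theorem~\ref{thm:ghost} with Proposition~\ref{prop:ghost} for the lower bound $n\le\ell$, and to obtain the upper bound and the equality by transporting the problem into the derived category of a polynomial algebra via formality (Theorem~\ref{thm:formal_DGA}). The first inequality is immediate: Theorem~\ref{thm:ghost} gives $n-1\le\text{gh.len.}\,C^*(BG)$ in $\D(C^*(BG^{\times n}))$, and Proposition~\ref{prop:ghost} then yields $n\le\text{gh.len.}\,C^*(BG)+1\le\ell$.

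For the upper bound I would set $R:=H^*(BG;\K)$, a polynomial $\K$-algebra on $r:=\dim QH^*(BG;\K)$ homogeneous generators $x_1,\dots,x_r$, so that $H^*(BG^{\times n};\K)\cong R^{\otimes n}$. Formality (Theorem~\ref{thm:formal_DGA}) gives an exact equivalence $\D(C^*(BG^{\times n}))\simeq\D(R^{\otimes n})$ carrying $C^*(BG)$ to $R$ with the $R^{\otimes n}$-module structure along the iterated product $\mu\colon R^{\otimes n}\to R$ (the diagonal $\Delta^{(n-1)}$ induces $\mu$ on cohomology). The kernel $I=\ker\mu$ is generated by the $(n-1)r$ elements obtained by placing each $x_i$ in the $j$-th tensor factor and subtracting $x_i$ in the first factor ($1\le i\le r$, $2\le j\le n$); these form a regular sequence in the Cohen--Macaulay ring $R^{\otimes n}$, since $R^{\otimes n}/I\cong R$ has Krull dimension $r=nr-(n-1)r$. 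Thus the Koszul complex on this sequence is a free resolution of $R$ over $R^{\otimes n}$ of length $(n-1)r$, and the brutal (stupid) truncation of it places $R$ in $\text{{\tt thick}}^{(n-1)r+1}_{\D(R^{\otimes n})}(R^{\otimes n})$; hence $\text{level}_{\D(R^{\otimes n})}^{R^{\otimes n}}(R)\le(n-1)r+1$, which transports back to the stated bound.

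For the equality I would invoke $QH^*(BG;\K)^{2j+1}=0$ to make every $x_i$ of even degree, so $R^{\otimes n}$ is a genuine polynomial ring and $R=R^{\otimes n}/I$ a complete-intersection quotient. The Koszul complex above is then the minimal free resolution of $R$, and since the generators of $I$ act as zero on $R$ the Yoneda algebra $\text{Ext}^*_{R^{\otimes n}}(R,R)$ is the exterior algebra $\Lambda_R(\xi_1,\dots,\xi_{(n-1)r})$ on $(n-1)r$ classes of homological degree one; its top component $\xi_1\cdots\xi_{(n-1)r}\in\text{Ext}^{(n-1)r}_{R^{\otimes n}}(R,R)$ is therefore nonzero. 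Each $\xi_s$, read as a morphism in $\D(R^{\otimes n})$, is the connecting morphism of a short exact sequence of $R^{\otimes n}$-modules whose terms are ordinary modules (concentrated in a single cohomological degree), so the long exact homology sequence of the associated triangle forces $H(\xi_s)=0$ and $\xi_s$ is a ghost. Hence $\xi_1\cdots\xi_{(n-1)r}$ is a nonzero composite of $(n-1)r$ ghosts out of $R$, so $\text{gh.len.}\,R\ge(n-1)r$ and, by Proposition~\ref{prop:ghost}, $\text{level}_{\D(R^{\otimes n})}^{R^{\otimes n}}(R)\ge(n-1)r+1$; combined with the upper bound this gives $\ell=(n-1)\dim QH^*(BG;\K)+1$.

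The hard part will be the reduction to commutative algebra---knowing that $C^*(BG^{\times n})$ is formal and, more delicately, that the module $C^*(BG)$ over it is identified compatibly with $R$ over $R^{\otimes n}$ along $\mu$. This is precisely where evenness of $QH^*(BG;\K)$ is genuinely used: it is what supplies the formality of Theorem~\ref{thm:formal_DGA}, and without it the actual $A_\infty$-structure on $C^*(BG^{\times n})$ and its module $C^*(BG)$ could admit a shorter resolution, so the level might drop strictly below $(n-1)r+1$. A lesser point to watch is the internal-degree bookkeeping, namely checking that the generators $\xi_s$, which carry nonzero internal degree, remain literal ghosts in the single-graded category $\D(R^{\otimes n})$---which the homology long exact sequence does deliver.
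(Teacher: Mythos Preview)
Your lower bound is exactly the paper's argument. The remaining two parts diverge from the paper, and one of them has a genuine gap.

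\textbf{The upper bound.} Your route goes through an equivalence $\D(C^*(BG^{\times n}))\simeq\D(R^{\otimes n})$ carrying $C^*(BG)$ to $R$, and you cite Theorem~\ref{thm:formal_DGA} for this. That theorem does not say this: it is a Koszul-duality statement producing an equivalence $\D(A)\simeq\widetilde{\D}(\mathrm{Ext}_A(\K,\K)\text{-}\mathsf{mod})$, not a formality statement identifying $\D(C^*(BG^{\times n}))$ with $\D(H^*(BG^{\times n}))$. More seriously, the upper bound in the proposition is asserted \emph{without} the even-degree hypothesis, yet your own final paragraph concedes that the formality you need is only available under that hypothesis. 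So as written your argument does not prove the stated upper bound; it only proves it after imposing the extra evenness assumption. The paper avoids this entirely: it invokes \cite[Lemma~7.1]{K5}, which for any map $X\to B$ with $\mathrm{Tor}^{H^*(B)}(H^*(X),\K)$ finite gives
\[
\text{level}_{\D(C^*(B))}^{C^*(B)}(C^*(X))\le \text{pd}_{H^*(B)}H^*(X)+1,
\]
with no formality required, and then bounds the projective dimension by $(n-1)r$ via the same Koszul/regular-sequence resolution you describe. You should replace the formality step by this lemma.

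\textbf{The equality.} Here your approach is a legitimate alternative to the paper's. The paper observes that under the even-degree hypothesis the fibration $G^{\times(n-1)}\to BG\to BG^{\times n}$ is $\K$-formalizable (\cite[Proposition~2.4]{K4}) and then applies \cite[Proposition~5.2]{K5}, which computes the level exactly in that situation. Your argument instead transports to $\D(R^{\otimes n})$ (now the even-degree hypothesis does supply the needed formalizability, though again not via Theorem~\ref{thm:formal_DGA}), and exhibits a nonzero $(n-1)r$-fold ghost composite using the exterior-algebra structure of $\mathrm{Ext}^*_{R^{\otimes n}}(R,R)$. This is correct: each Yoneda generator $\xi_s$ arises from a short exact sequence of honest graded $R^{\otimes n}$-modules, hence is zero on cohomology in each degree, so is a ghost; their product is the nonvanishing top exterior class. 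One phrasing to fix: the terms of those short exact sequences are not ``concentrated in a single cohomological degree'' --- they are graded modules with zero differential --- but the degreewise exactness is what forces the connecting map to vanish on $H^*$, and that is what you use.
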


For example, we consider the orthogonal group $SO(3)$. Since the mod $2$ cohomology 
$H^*(BSO(3); {\mathbb Z}/2)$ is a polynomial algebra generated by the second and the third 
Stiefel-Whitney classes, it follows that 
\[
2 \leq {\text{gh.len.}}C^*(BSO(3);{\mathbb Z}/2) +1 \leq 
\text{\rm level}_{\D(C^*(BSO(3)^{\times 2}))}^{C^*(BSO(3)^{\times 2})}(C^*(BSO(3)) 
\leq 3.
\]

\begin{rem} 
Let $F_{\Delta^{(n-1)}}$ be the homotopy fibre of the map $\Delta^{(n-1)} : BG \to BG^{\times n}$. 
Then the fibration $F_{\Delta^{(n-1)}} \to BG$ admits the holonomy right action of $\Omega BG^{\times n}$ 
and is weakly equivalent to the fibration $BG\times_{BG^{\times n}}EG^{\times n} \to BG$ with 
the holonomy right action of $G^{\times n}$; see \cite[Proposition 2.11]{F-H-T} for example.  Then 
the duality in Theorem \ref{thm:main} (2) implies that 
\begin{eqnarray*}
\text{level}_{\D(C^*(BG^{\times n}))}^{C^*(BG^{\times n})}(C^*(BG))&=& 
\text{level}_{\D(C_*(G^{\times n}))}^{\K}(C_*(BG\times_{BG^{\times n}}EG^{\times n})).  
\end{eqnarray*}
We observe that $BG\times_{BG^{\times n}}EG^{\times n}$ is homotopy equivalent to 
a homogeneous space of the form 
$G^{\times n}/\Delta G = G^{\times (n-1)}$, where 
$\Delta : G \to G^{\times n}$ denotes the diagonal map. In fact, we have a homotopy fibre square 
\[
\xymatrix@C25pt@R20pt{
G^{\times n}/\Delta G \ar[r]^-\simeq \ar@{=}[d]& 
BG\times_{BG^{\times n}}EG^{\times n} \ar[r]^-{} \ar[d] & EG^{\times n}
\ar[d]  \\
G^{\times n}/\Delta G \ar[r]  \ar[r]  &BG \ar[r]_{\Delta} & BG^{\times n}.
}
\]
\end{rem}

The rest of the article is organized as follows. 
In Section 3, we prove Theorem \ref{thm:main}. Section 4 is devoted to
proving Propositions  \ref{prop:L}, \ref{prop:section} and \ref{prop:level_2^n}. 
Section 5 presents proofs of Theorem \ref{thm:ghost} and Proposition \ref{prop:shriek}. 
In Appendix, we recall results on 
a coderived category due to Lef\`evre-Hasegawa \cite{L-H} on which we
rely when proving Theorem \ref{thm:main}. 
Moreover, a variant of Koszul duality due to He and Wu \cite{H-W} is discussed.

\section{Proof of Theorem \ref{thm:main}} 

As we will see below, for an object $f$ in ${\mathcal TOP}_B$, we obtain  
quasi-isomorphisms which connect DG modules $C^*(s(f))$ and
$C_*(F_f)$  by making use of the bar and cobar constructions. 
In order to prove Theorem \ref{thm:main}, we incorporate 
such the quasi-isomorphisms into arguments on
appropriate derived and coderived categories.   

For a graded vector space $V$, we denote by $V^\vee$ 
the graded dual $\text{Hom}_\K(V, \K)$, namely 
$(V^\vee)^{-k}=(V^\vee)_k = \text{Hom}_\K(V^k, \K)$.  We say that $V$ is {\it locally finite} if $V^i$ is of finite dimension for each $i$. 

\begin{defn}
(i) Let $A$ be an augmented DG algebra over $\K$ 
with differential of degree $+1$ and $\mathsf{mod-}A$ the category of DG left $A$-modules. 
The {\it derived category} $\D(\mathsf{mod-}A)$ of 
DG right $A$-modules is the localization of the homotopy category of $\mathsf{mod-}A$ 
at the class of quasi-isomorphisms. \\
(ii) Let $C$ be a co-augmented DG coalgebra over $\K$ with differential of degree $+1$ and $\mathsf{comod-}C$ the 
category of cocomplete DG right $C$-comodules; see the Appendix.
The {\it coderived category} $\D(\mathsf{ comod-}C)$ of 
cocomplete DG $C$-comodules is the localization of the homotopy category of $\mathsf{comod-}C$ 
at the class of weak equivalences; 
see \cite{L-H} and also the Appendix.

We shall write $\D(A)$ and $\D_c(C)$ for $\D(\mathsf{mod-}A)$ and 
$\D(\mathsf{comod-}C)$, respectively.
\end{defn}

By definition, a {\it simply-connected} algebra $A$ satisfies the condition that $A^0 =\K$, $A^1=0$ and $A^i=0$ for $i < 0$.   
We call a coalgebra $C$ {\it simply-connected} if $C^0=\K$, $C^{-1} =0$ and $C^i=0$ for $i > 0$. 
In what follows, we assume that an algebra and a coalgebra are endowed 
with an augmentation and a co-augmentation, respectively and that they are defined over a field $\K$. 

Let $F : \mathsf{comod-}C \to C^\vee\mathsf{-mod}$ be a functor given by
sending a cocomplete DG right $C$-comodule to the DG left $C^\vee$-module with the same
underlying $\K$-module and 
whose multiplication is given by the natural composite
\[
C^\vee \otimes M \to C^\vee \otimes M \otimes C \to C^\vee \otimes C
\otimes M \to M. 
\]
Composing the vector space dual functor $( \ )^\vee$ with $F$, we have an
exact functor
\[
tD : \D(\mathsf{comod-}C) \ \maprightu{F_*} \ \D(C^\vee \mathsf{-mod}) \ 
\maprightu{( \ )^\vee} \ \D(\mathsf{mod-}C^\vee)
\]
from the coderived category to the derived category; see Remark \ref{rem:coalgebra}.

We deal with the bar and cobar constructions below. For the (co)algebra
and (co)module structures of these constructions, 
see \cite[Section 2]{FHT}, \cite[Section 4]{FHT2} and \cite{Mu}. We also refer the reader to \cite{H-M-S} for 
differential graded objects.

Let $A$ be a DG algebra and 
consider the bar resolution ${B}(A; A) \to {}_A\K$ of $\K$. 
Let ${B}(A)$ be a DG coalgebra defined by 
${B}(A)=\K\otimes_A{B}(A; A)$. 
By using the twisted tensor product construction associated with 
the natural twisting cochain $\tau : {B}(A) \to A$ of degree $+1$, 
we have a pair of adjoint functors 
\[
\xymatrix@C30pt@R25pt{
\D_c({B}(A)) \ar@<1ex>[rr]^{L:= - \otimes_{\tau}A} & & 
\D(A)   \ar@<1ex>[ll]^{R:= - \otimes_{\tau}{B}(A)}. 
}
\]
For more details, see \cite{Mu}, \cite[Ch. 2]{L-H}, 
\cite{Keller2} and also Appendix.  
We write $R_A$ for the functor $- \otimes_\tau{B}(A)$. 
The definition of the twisted tensor product enables us to deduce that 
$R_A$ coincides with the functor $- \otimes_A{B}(A; A)$. For a right $A$-module $M$, we may write $B(M; A)$ for $M\otimes_AB(A; A)$. 
For a coalgebra $C$ and a right $C$-comodule $N$, let $\Omega (N ; C)$ denote the cobar construction; 
see \cite[Section 2]{FHT} for example.  
  
The duality on the bar and cobar constructions yields the following result. 

\begin{prop}\label{prop:equivalence} Let $C$ be a simply-connected DG coalgebra with $H(C)$ locally finite. Then 
there exists an equivalence
\[
\Theta : \DD(\Omega C) \to \DD({B}(C^\vee)^\vee) 
\]
of triangulated categories such that for a DG $C$-comodule $N$ with $H(N)$ locally finite and 
bounded above,   
\[
\Theta(\Omega (N; C))\cong {B}(N^\vee; C^\vee)^\vee.
\]
\end{prop}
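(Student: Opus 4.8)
The plan is to exhibit the equivalence $\Theta$ as a composite of two functors whose effect on cobar/bar constructions we already control, and then compare. First I would pass from $\Omega C$-modules to $B(C^\vee)^\vee$-modules by dualizing. Since $C$ is simply-connected with $H(C)$ locally finite, the dual $C^\vee$ is a simply-connected DG algebra with locally finite cohomology, and there is a natural map of DG algebras $\Omega C \to (B C^\vee)^\vee$ coming from the duality between the cobar construction on $C$ and the bar construction on $C^\vee$; in fact $\Omega(C)$ and the dual of $B(C^\vee)$ are isomorphic as DG algebras in the simply-connected, degreewise-finite situation (the cobar differential is the transpose of the bar differential), so one gets this isomorphism on the nose after passing to an appropriate locally finite model. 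The subtlety is that $B(C^\vee)$ is built from $C^\vee$ which is only locally finite, not finite-dimensional, so $B(C^\vee)^\vee$ is a completed object; I would therefore want to work with the locally finite part and check that the natural comparison $\Omega C \to B(C^\vee)^\vee$ is a quasi-isomorphism, using that both compute $\mathrm{Tor}$/$\mathrm{Ext}$ over the relevant (co)algebra against the ground field and that local finiteness makes graded duality exact degreewise.

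Granting a quasi-isomorphism $\varphi : \Omega C \xrightarrow{\simeq} (B C^\vee)^\vee$ of DG algebras, restriction of scalars along $\varphi$ induces an equivalence of derived categories $\varphi^* : \D((B C^\vee)^\vee) \xrightarrow{\ \simeq\ } \D(\Omega C)$, and $\Theta$ is defined to be its inverse. This is an equivalence of triangulated categories because a quasi-isomorphism of DG algebras always induces such an equivalence on derived categories of DG modules (this is standard, e.g.\ via \cite{Keller}). So the existence half of the statement reduces entirely to constructing $\varphi$ and checking it is a quasi-isomorphism.

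For the compatibility clause, I would compute $\Theta$ on $\Omega(N;C)$ directly from the description of $\varphi^*$. The cobar construction $\Omega(N;C) = N \otimes_\tau \Omega C$ is, essentially by definition, the twisted tensor product giving a semifree resolution of $N$'s image; dually, $B(N^\vee; C^\vee) = N^\vee \otimes_\tau B(C^\vee)$ is the bar resolution, and $B(N^\vee;C^\vee)^\vee$ is naturally a DG module over $(BC^\vee)^\vee$. The point is that, under the identification $\varphi$ of underlying DG algebras, the underlying twisted complexes $N \otimes_\tau \Omega C$ and $(N^\vee \otimes_\tau B C^\vee)^\vee$ are isomorphic: the cobar differential on $N \otimes \Omega C$ is term-by-term the $\K$-linear transpose of the bar differential on $N^\vee \otimes B C^\vee$, because $N$ is locally finite and bounded above so $(N^\vee)^\vee \cong N$ and graded duality commutes with the finite sums occurring in each internal degree. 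Hence $\Theta(\Omega(N;C)) = \varphi^*{}^{-1}(\Omega(N;C))$ is isomorphic in $\D((BC^\vee)^\vee)$ to $B(N^\vee;C^\vee)^\vee$, which is the asserted isomorphism. I would record this as a natural isomorphism of functors where both sides are defined.

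The main obstacle I expect is bookkeeping around completions and convergence: $B(C^\vee)$ is a cofree coalgebra on the (infinite-dimensional) suspension of $C^\vee$'s augmentation ideal, so $B(C^\vee)^\vee$ involves an inverse limit, and one must be careful that ``$\Omega C$'' matches it rather than a completed or naive version. The simply-connectedness hypothesis is exactly what rescues this — it forces each internal degree of $B(C^\vee)$ (hence of $\Omega C$) to be a \emph{finite} direct sum of tensor powers, so $B(C^\vee)^\vee$ is locally finite with no genuine completion and the transpose identification is literal in each degree. Verifying this degreewise finiteness and that all the twisting-cochain structure maps dualize correctly is the technical heart; once it is in place, the quasi-isomorphism $\varphi$ and the compatibility formula follow formally. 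A secondary, lesser obstacle is ensuring the adjunction $(L,R)$ recalled above and the resulting resolution-level statements are genuinely about $\D$ versus $\D_c$ in a way consistent with \cite{L-H}; I would lean on the Appendix for this and on the hypothesis that $N$ is bounded above so that the cobar resolution converges.
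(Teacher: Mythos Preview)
Your proposal has a genuine gap at exactly the point you flag as the ``main obstacle'': the hypothesis is only that $H(C)$ is locally finite, not that $C$ itself is. Simply-connectedness of $C$ guarantees that in each fixed total degree only finitely many tensor lengths contribute to $\Omega C$ or to $B(C^\vee)$, but it does \emph{not} make each such tensor power finite-dimensional unless each $C^i$ is. So your claimed degreewise identification $\Omega C \cong B(C^\vee)^\vee$ (or even a direct quasi-isomorphism) is not available in general, and the double-dual step $(N^\vee)^\vee \cong N$ in your compatibility argument fails for the same reason: only $H(N)$ is assumed locally finite.

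The paper's proof is designed precisely to get around this. It replaces $C^\vee$ by a locally finite $TV$-model $u:A=TV \xrightarrow{\simeq} C^\vee$ (possible because $H(C^\vee)$ is locally finite), so that the honest isomorphism $\Omega(A^\vee)\cong B(A)^\vee$ holds. Then $\Theta$ is built as a \emph{zigzag}
\[
\D(\Omega C)\;\xrightarrow{\ \simeq\ }\;\D(\Omega(A^\vee))\;\xrightarrow{\ \simeq\ }\;\D(B(A)^\vee)\;\xrightarrow{\ \mu_2^*\ }\;\D(B(C^\vee)^\vee),
\]
the last arrow being restriction along $(Bu)^\vee:B(C^\vee)^\vee\to B(A)^\vee$; there is no single algebra map $\Omega C\to B(C^\vee)^\vee$ in sight. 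For the compatibility clause the same trick is applied to the module: one takes a locally finite $A$-semifree resolution $C_1\xrightarrow{\simeq} N^\vee$, so that $C_1^\vee$ is an honest $A^\vee$-comodule and the cobar/bar duality $\Omega(C_1^\vee;A^\vee)\cong B(C_1;A)^\vee$ holds on the nose; one then transports back to $C$ and $C^\vee$ along the quasi-isomorphisms. Your outline becomes correct once you insert these locally finite replacements, but without them the transpose identifications you rely on simply do not exist.
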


\begin{proof}
Let $u : A =TV \stackrel{\simeq}{\longrightarrow} C^\vee$ be a TV-model for the simply-connected DG algebra $C^\vee$ in 
the sense of Halperin and Lemaire \cite{H-L}. By assumption, 
$H(C^\vee)$ is locally finite. Then without loss of generality, 
we can assume that $A$ is also locally finite; see \cite[Proposition 4.2]{FHT}.
 
Let $\Delta : C \to C \otimes C$ be the comultiplication
 on $C$. Then the multiplication 
$m : C^\vee \otimes C^\vee \to C^\vee$ is defined by the composite 
\[
C^\vee \otimes C^\vee=C^\vee \otimes C^\vee \stackrel{q'}{\to}
(C \otimes C)^\vee \stackrel{\Delta^\vee}{\to} C^\vee =
 C^\vee, 
\]
where $q'$ denotes the natural quasi-isomorphism. 
We have a commutative diagram 
\[
\xymatrix@C37pt@R20pt{
A^\vee \ar[r]^{m^\vee} & (A\otimes A)^\vee & A^\vee\otimes A^\vee
 \ar[l]^{\cong}_{q''} \\ 
(C^\vee)^\vee \ar[r]^(0.45){m^\vee} \ar[u]^{u^\vee}& (C^\vee\otimes C^\vee)^\vee \ar[u]_{(u\otimes u)^\vee}&
 C^{\vee\vee}\otimes C^{\vee\vee}  \ar[l]_{q'}^{\simeq}  \ar[u]_{u^\vee \otimes u^\vee} \\ 
C \ar[u]^{q}_{\simeq} \ar[rr]_{\Delta} & & C \otimes C,  
\ar[u]_{q\otimes q}
}
\]
where $q$ and $q''$ are the natural quasi-isomorphisms. 
In fact, the commutativity of the lower square follows from 
that of the diagram 
\[
\xymatrix@C37pt@R20pt{
(C^\vee)^\vee \ar[r]^(0.45){\Delta^{\vee \vee}} &
(C \otimes C)^{\vee \vee} \ar[r]^{(q')^\vee}& (C^\vee\otimes
 C^\vee)^\vee \\
C \ar[u]^{q} \ar[r]_(0.45){\Delta} &  C \otimes C \ar[u]_{q}
\ar[r]_{q\otimes q} & C^{\vee\vee} \otimes C^{\vee\vee}. \ar[u]_{q'} 
}
\]
Observe that $q'' : A^\vee \otimes A^\vee \to (A\otimes A)^\vee$ 
is an isomorphism because $A$ is locally finite. This implies that 
$u^\vee \circ q : C \to A^\vee$ is a quasi-isomorphism of 
coalgebras. We then have a sequence of quasi-isomorphisms of algebras
\[
\xymatrix@C37pt@R20pt{
\Omega C \ar[r]^{\rho:=\Omega(u^\vee q)}_{\simeq} & 
\Omega (A^\vee)  \ar[r]_{\cong}^{\mu_1} & {B}(A)^\vee  &
{B}(C^\vee)^\vee 
\ar[l]_{\mu_2}^{\simeq}. 
}
\eqnlabel{add-3}
\]
Thus the result \cite[Proposition 4.2]{K-M} 
enables us to obtain equivalences of triangulated categories
\[
\xymatrix@C37pt@R12pt{
\D(\Omega C) \ar@<1ex>[r]^{- \otimes^L_{\Omega C}\Omega (A^\vee)} 
& \D(\Omega (A^\vee))  \ar@<1ex>[l]^{\rho^*}_{\simeq} 
\ar@<1ex>[r]^{- \otimes^L_{\Omega (A^\vee)}{B}(A)^\vee}
& \D({B}(A)^\vee)  \ar@<1ex>[l]^{\mu_1^*}_{\simeq} 
\ar[r]^(0.45){\mu_2^*}_(0.45){\simeq}& 
\D({B}(C^\vee)^\vee).  
}
\]
We define $\Theta : \D(\Omega C) \to \D({B}(C^\vee)^\vee)$ by
the composite. 

Let $C_1 \stackrel{\simeq}{\longrightarrow} N^\vee$ 
be an $A$-semifree resolution for $N^\vee$; see
\cite[Propositions 4.6 and 4.7]{FHT}. 
Since $H_*(N)$ is locally finite, 
we may assume that so is the $A$-module $C_1$; see \cite[Proposition 4.6]{FHT}. 
Then we can define a comodule structure on $C_1^\vee$ by the composite 
$c : C_1^\vee \longrightarrow (C_1\otimes A)^\vee \stackrel{\cong}{\longleftarrow}  
C_1^\vee \otimes A^\vee 
$. 
The same argument as above allows us to obtain a commutative diagram
\[
\xymatrix@C40pt@R15pt{
C_1^\vee \ar[r]^-{c} & C_1^\vee \otimes A^\vee \\
N \ar[r]_-{\Delta_N} \ar[u]^{\simeq} 
& N \otimes C \ar[u]_{\simeq}
}
\]
in which vertical arrows are quasi-isomorphisms. 
Thus we have an isomorphism 
$\Omega (N; C) \cong \rho^*\Omega (C_1^\vee; A^\vee)$ 
in $\D(\Omega C)$. Moreover, it follows from the locally finiteness
 of $C_1$ and $A$ that
$\Omega (C_1^\vee; A^\vee)$ is isomorphic to 
$\mu_1^*({B}(C_1; A)^\vee)$ and 
$\mu_2^*({B}(C_1; A)^\vee)\cong {B}(N^\vee; C^\vee)^\vee$ in 
$\D({B}(C^\vee)^\vee)$. 
This completes the proof.  
\end{proof}

We have a crucial result on exact functors which connect the triangulated
categories $\D(\Omega C)$ and $\D(C^\vee)$ for a coalgbera $C$.  
The result is a key to proving the duality on chain and cochain type levels
described in Theorem \ref{thm:main}. 

Let $A$ and $C$ be an augmented DG algebra and a co-augmented cocomplete DG coalgebra, respectively. 
The result \cite[Proposition 2.14]{FHT} asserts that for a $C$-comodule $N$,  
there exist a quasi-isomorphism
$\sigma_C : C \stackrel{\simeq}{\to} B\Omega C$ of coalgberas and  a quasi-isomorphism 
$\sigma_N : N \stackrel{\simeq}{\to} B(\Omega (N; C) ; \Omega C)$ of $C$-comodules.  
 
\begin{thm}
\label{thm:diagram}  {\em (i)} Under the same assumption as above on the coalgebra $C$, 
one has a commutative diagrams up to isomorphism 
\[
\xymatrix@C40pt@R12pt{
 & & \DD_c(C) \ar@/_/[lld]_{\Omega( \ ; C)} 
\ar@/^/[rd]^-{tD}   & \\
\DD(\Omega C) \ar[r]_-{R_{\Omega C}} &  \DD_c({B}\Omega C) 
\ar[r]_-{tD}
 & \DD(({B}\Omega C)^\vee) 
\ar@<-1ex>[r]_-{- \otimes^L_{(B\Omega C)^\vee}C^\vee}^-{\simeq} & \DD(C^\vee); 
\ar@<-1ex>[l]_-{(\sigma_C^\vee)^*} 
}
\]
that is, there exists a natural isomorphism between two composite functors from $\DD_c(C)$ to $\DD(({B}\Omega C)^\vee)$. 
Moreover, all the functors between (co)derived categories are exact.  

\noindent
{\em (ii)} Let $C$ be a simply-connected DG coalgbera with $H(C)$ locally finite. Let $\DD^{lf, -}_c(C)$ denote the full subcategory of $\DD_c(C)$ 
consisting of comodules whose cohomologies are locally finite and bounded above.  Then one has a commutative diagram up to isomorphism 
\[
\xymatrix@C40pt@R10pt{
 & & \DD^{lf,-}_c(C) \ar@/_/[lld]_{\Omega( \ ; C)} 
\ar@/^/[rd]^{tD}   & \\
\DD(\Omega C) \ar[r]_-{\Theta}^-{\simeq} &  \DD((B(C^\vee))^\vee) 
 & \DD_c(B(C^\vee)) \ar[l]^-{tD} & \DD(C^\vee) \ar[l]^-{R_{C^\vee}}   
}
\]
in which all the functors are exact.  
\end{thm}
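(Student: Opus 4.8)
The plan is to deduce both parts from the explicit bar and cobar constructions, so that the real content reduces to results already available: \cite[Proposition 2.14]{FHT} and the Lef\`evre--Hasegawa bar--cobar equivalence recalled in the Appendix for (i), and Proposition \ref{prop:equivalence} for (ii). For (i), I would first identify the composite $R_{\Omega C}\circ\Omega(-;C):\D_c(C)\to\D_c(B\Omega C)$ with the corestriction functor $(\sigma_C)_*$ along the coalgebra quasi-isomorphism $\sigma_C:C\to B\Omega C$. Indeed $R_{\Omega C}(\Omega(N;C))$ is by definition the bar construction $B(\Omega(N;C);\Omega C)$, and \cite[Proposition 2.14]{FHT} supplies a quasi-isomorphism $\sigma_N:N\stackrel{\simeq}{\to}B(\Omega(N;C);\Omega C)$ which is a morphism of comodules over $\sigma_C$ and is natural in $N$; this is precisely a natural weak equivalence $(\sigma_C)_*N\stackrel{\simeq}{\to}R_{\Omega C}(\Omega(N;C))$ in $\D_c(B\Omega C)$. (Equivalently, $L:\D_c(B\Omega C)\to\D(\Omega C)$ is an equivalence with quasi-inverse $R_{\Omega C}$, and $L\circ(\sigma_C)_*\cong\Omega(-;C)$ since the universal twisting cochain $B\Omega C\to\Omega C$ restricts along $\sigma_C$ to the canonical twisting cochain $C\to\Omega C$.)

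The second step is formal. Unwinding the definition of the functor $F$, the $(B\Omega C)^\vee$-module $F((\sigma_C)_*M)$ is obtained from the $C^\vee$-module $FM$ by restriction of scalars along $\sigma_C^\vee:(B\Omega C)^\vee\to C^\vee$; since linear duality commutes with restriction of scalars, $tD((\sigma_C)_*M)\cong(\sigma_C^\vee)^*(tD(M))$ naturally in $M$. Combining with the first step gives the required natural isomorphism $tD\circ R_{\Omega C}\circ\Omega(-;C)\cong(\sigma_C^\vee)^*\circ tD$. Exactness of the functors in the diagram is then routine: $\Omega(-;C)$ and $R_{\Omega C}$ are twisted tensor product functors, hence exact (see the Appendix); $tD$ is exact by Remark \ref{rem:coalgebra}; and $(\sigma_C^\vee)^*$ together with $-\otimes^L_{(B\Omega C)^\vee}C^\vee$ are exact and mutually quasi-inverse because $\sigma_C^\vee$, the linear dual of the quasi-isomorphism $\sigma_C$, is a quasi-isomorphism of DG algebras.

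For (ii), I would unwind both composites on an object $N$ of $\D_c^{lf,-}(C)$. On one side Proposition \ref{prop:equivalence} gives, naturally in $N$, an isomorphism $\Theta(\Omega(N;C))\cong B(N^\vee;C^\vee)^\vee$ in $\D((B(C^\vee))^\vee)$, where the right-hand side carries the $(B(C^\vee))^\vee$-module structure dual to the canonical $B(C^\vee)$-comodule structure of the bar construction. On the other side $tD(N)=N^\vee$ as a $C^\vee$-module by the definition of $tD$; then $R_{C^\vee}(N^\vee)=N^\vee\otimes_{C^\vee}B(C^\vee;C^\vee)=B(N^\vee;C^\vee)$ by the identification $R_A=-\otimes_AB(A;A)$ recalled before the theorem; and finally $tD(B(N^\vee;C^\vee))=B(N^\vee;C^\vee)^\vee$ carrying the same canonical $(B(C^\vee))^\vee$-module structure. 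Thus the two composites agree on objects, and naturality of the isomorphism of Proposition \ref{prop:equivalence} together with that of $tD$, $R_{C^\vee}$ and $\Theta$ upgrades this to a natural isomorphism of functors; $\Theta$ is exact as a composite of derived tensor functors and restrictions along quasi-isomorphisms, and the remaining functors are exact as in (i).

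The step I expect to cost the most care is not the diagram-chase itself but pinning down the (co)module structures. In (i) one must check that $\sigma_N$ is genuinely a morphism of $B\Omega C$-comodules over $\sigma_C$, natural in $N$, and in (ii) that $tD$ converts the explicit $B(C^\vee)$-comodule structure of the bar construction into exactly the module structure that occurs in Proposition \ref{prop:equivalence}. Once the (co)module structures on the bar and cobar constructions are written out and the canonical pairings $V^\vee\otimes W^\vee\to(V\otimes W)^\vee$ are tracked through, both verifications are mechanical; the local finiteness and boundedness hypotheses in (ii) enter precisely to make these pairings isomorphisms and to allow the application of Proposition \ref{prop:equivalence}.
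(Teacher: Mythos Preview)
Your proposal is correct and follows essentially the same route as the paper. The paper's proof is terse: for (i) it records the chain $tD\bigl(B(\Omega(N;C);\Omega C)\bigr)=B(\Omega(N;C);\Omega C)^\vee\simeq N^\vee=(\sigma_C^\vee)^*\,tD(N)$, which is precisely your argument via $\sigma_N$ from \cite[Proposition 2.14]{FHT} and the compatibility $tD\circ(\sigma_C)_*\cong(\sigma_C^\vee)^*\circ tD$, only with the intermediate corestriction step suppressed; for (ii) the paper invokes Proposition \ref{prop:equivalence} and the identification $tD\circ R_{C^\vee}\circ tD(N)=B(N^\vee;C^\vee)^\vee$ exactly as you do. Your extra care about naturality and the matching of (co)module structures is welcome but not a departure from the paper's method.
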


\begin{proof}
(i) Let $\tau : C \to \Omega C$ be the canonical twisting cochain. Then 
$\Omega ( \ ; C)$ is nothing but the functor $L= \text{-} \otimes_{\tau}\Omega C$ mentioned 
in Theorem \ref{thm:L-R} below. 
In particular, we see that $\Omega( \ ; C)$ is exact. 
Moreover, it follows that for any $N$ in $\D_c(C)$, 
\[
tD(B(\Omega (N; C); \Omega C)) = B(\Omega (N; C); \Omega C)^\vee \simeq N^\vee
= (\sigma_C^\vee)^*tD(N)
\]
in $\text{D}((B\Omega C)^\vee)$. 
This implies that the diagram is commutative up to isomorphism. 

(ii) Proposition \ref{prop:equivalence} yields that for any $N$ in  $\text{D}^{lf, -}_c(C)$,  
\[
\Theta(\Omega(N ; C)) \cong B(N^\vee ; C^\vee)^\vee = (tD\circ R_{C^\vee}\circ tD)(N). 
\]
We have the result. 
\end{proof}

In order to prove Theorem \ref{thm:main}, we recall important results on the level.  

\begin{lem}\label{lem:1}\cite[Lemma 3.9]{S}
Let $M$ be a DG-module over a non-negative simply-connected or non-positive connected DG algebra $A$. 
Assume that $M$ is bounded below if $A$ is non-negative and is 
bounded above if $A$ is non-positive. Then 
\[
\dim H(M \otimes_A^L\K) \geq \text{\em level}_{\DD(A)}^A(M).
\]
\end{lem}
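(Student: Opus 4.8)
The plan is to prove Lemma \ref{lem:1} by induction on $n = \text{level}_{\D(A)}^A(M)$, exploiting the fact that the functor $M \mapsto H(M \otimes_A^L \K)$ behaves subadditively along distinguished triangles and is insensitive to retracts and shifts. First I would handle the base case: if $\text{level}_{\D(A)}^A(M) = 0$ then $M \simeq 0$ and there is nothing to prove; if $\text{level}_{\D(A)}^A(M) = 1$, then $M \in \text{{\tt thick}}^1_{\D(A)}(A)$, so $M$ is a retract of a finite coproduct of shifts of $A$. Since $A \otimes_A^L \K \simeq \K$, such a coproduct of shifts has derived tensor product with $\K$ of total dimension equal to the number of summands, which is at least $1$, and passing to a retract can only keep the dimension nonzero (here one needs $M \not\simeq 0$, which holds since $\text{level} \geq 1$). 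So $\dim H(M \otimes_A^L \K) \geq 1$.

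For the inductive step, suppose $\text{level}_{\D(A)}^A(M) = n \geq 2$. Then $M$ is a retract of an object $M'$ sitting in a distinguished triangle $M_1 \to M' \to M_2 \to \Sigma M_1$ with $M_1 \in \text{{\tt thick}}^{n-1}_{\D(A)}(A)$ and $M_2 \in \text{{\tt thick}}^1_{\D(A)}(A)$. Applying $- \otimes_A^L \K$ yields a distinguished triangle in $\D(\K)$, hence a long exact sequence in homology, giving $\dim H(M' \otimes_A^L \K) \geq \dim H(M_1 \otimes_A^L \K) - \dim H(M_2 \otimes_A^L \K)$ — but this is the wrong direction, so instead I would argue more carefully. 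The right move is: a distinguished triangle gives $\dim H(M' \otimes_A^L \K) \geq \dim H(M_1\otimes_A^L\K) + \dim H(M_2 \otimes_A^L \K) - 2\dim(\text{image of connecting map})$, which is not obviously useful either. The cleaner approach, and the one I would actually pursue, is to bound the \emph{level} rather than chase exact sequences directly: one shows $\dim H(M \otimes_A^L \K) \geq \text{level}_{\D(\K)}^\K(M \otimes_A^L \K)$ (for $\D(\K)$, the $\K$-level of an object is exactly the number of nonzero homology groups is not quite right — rather, over a field every complex splits as a sum of shifts of $\K$, so its $\K$-level is $1$ if nonzero). That collapses too much, so the genuine argument must use that $- \otimes_A^L \K$ is an exact functor sending $A$ to $\K$ and hence sends $\text{{\tt thick}}^n_{\D(A)}(A)$ into $\text{{\tt thick}}^n_{\D(\K)}(\K)$, combined with a separate lower bound: over such an $A$, the hyperext spectral sequence (or the minimality of a semifree resolution) forces $\dim_\K H(M \otimes_A^L \K) \geq \text{level}_{\D(A)}^A(M)$ because a minimal semifree model of $M$ requires at least $n$ generations of generators, each contributing to $H(M\otimes_A^L\K) = H(\K \otimes_A F)$ where $F$ is the minimal model with zero differential mod the augmentation ideal.

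So the key steps, in order, are: (1) reduce to $M$ having a minimal semifree resolution $F \xrightarrow{\simeq} M$ over $A$ (available since $A$ is non-negative simply-connected with $M$ bounded below, or non-positive connected with $M$ bounded above — the hypotheses are exactly what guarantee a minimal model exists, as in \cite{FHT}); (2) observe that $M \otimes_A^L \K \simeq \K \otimes_A F$ has zero differential by minimality, so $\dim_\K H(M\otimes_A^L\K)$ equals the number of $A$-basis elements of $F$; (3) show that if $F$ has a basis that can be filtered into $k$ ``generations'' (each generation's differential landing in the span of earlier generations, tensored with the augmentation ideal — this is the standard filtration of a semifree module), then $M \in \text{{\tt thick}}^k_{\D(A)}(A)$, hence $\text{level}_{\D(A)}^A(M) \leq k \leq \dim_\K H(M \otimes_A^L \K)$; the subtlety is going the \emph{other} way, i.e. that \emph{some} such filtration realizes the level, but for the inequality we want it suffices that the \emph{number of basis elements} bounds the level from above. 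The hard part will be step (2)/(3): making rigorous that minimality of the semifree resolution over these particular DG algebras forces the induced differential on $\K \otimes_A F$ to vanish, and that the total rank of $F$ is an upper bound for the $A$-level — this is essentially the content of \cite[Lemma 3.9]{S}, and the main obstacle is verifying the minimal-model machinery applies verbatim in both the non-negative and non-positive cases under the stated boundedness hypotheses on $M$.
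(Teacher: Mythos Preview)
Your proposal meanders through two dead ends (induction on level via the long exact sequence, and the exact functor to $\D(\K)$) before settling on the correct argument, which is exactly the paper's proof: take a minimal semifree resolution $F\stackrel{\simeq}{\to}M$, use minimality to identify $H(M\otimes_A^L\K)$ with $F\otimes_A\K$ (zero differential), and observe that the semifree filtration $\{F^l\}$ exhibits $M$ as lying in $\text{{\tt thick}}^{n+1}_{\D(A)}(A)$ where $n+1$ is the number of filtration layers, each contributing at least one basis element to $F\otimes_A\K$. The paper also disposes of the case $\dim H(M\otimes_A^L\K)=\infty$ at the outset, which you should mention explicitly since finiteness is what forces the semifree filtration to terminate and each layer to be a \emph{finite} coproduct of shifts of $A$.

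One clarification on what you call ``the hard part'': you do not need to show that any filtration realizes the level. You only need the upper bound $\text{level}_{\D(A)}^A(M)\leq (\text{number of layers})\leq (\text{number of basis elements})=\dim H(M\otimes_A^L\K)$, and the semifree filtration of the minimal model gives this directly. The existence of minimal semifree resolutions under the stated hypotheses is standard (the paper cites \cite{FHT_G}, \cite{F-H-T}, \cite{FHT2}), so this is not an obstacle.
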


The difference between the dimension of $H(M \otimes_A^L\K)$ and the
level is also of interest to us. In general, the difference is very large.  
The proof of Lemma \ref{lem:1} which we provide below exhibits the fact. 

\medskip
\noindent
{\it Proof of Lemma \ref{lem:1}.} 
If $\dim H(M \otimes_A^L\K) =\infty$, then  the assertion is immediate. 
 
By assumption, the module $M$ admits a minimal semi-free resolution  
$F \stackrel{\simeq}{\to} M$ endowed with 
a filtration $\{F^l\}_{l\geq 0}$ of $F$; see
\cite{FHT_G}, \cite{F-H-T} and \cite[Section 2]{FHT2} . 
We thus obtain triangles 
$\coprod_i\Sigma^{n_i^0}A \to F^1 \to \coprod_j\Sigma^{n_j^1}A \to ,$   
$F^1 \to F^2 \to \coprod_j\Sigma^{n_j^2}A \to  , ... ,$   
$F^{n-1} \to F^n \to \coprod_j\Sigma^{n_j^n}A \to , ... . $  
The minimality of the semi-free resolution enables us to deduce that 
\[
H(M\otimes_A^L\K)=H(F\otimes_A \K)=F\otimes_A \K 
= \coprod_{s\geq 0}\coprod_j\Sigma^{n_j^s}\K. 
\]
Suppose that $\dim H(M \otimes_A^L\K)$ is of finite dimension.   
Then it follows that there 
exists an integer $n$ such that $F^n \simeq M$ and each index $j$
runs in finite numbers. 
Thus we see that $M \in \text{{\tt thick}}^{n+1}_{\D(A)}(A)$. 
If $n_j^s=0$ for any $j$ and $s$, then $M \simeq 0$ and hence the result is
obvious. Without loss of generality, we can assume 
that for any $s$, $\coprod_j\Sigma^{n_j^s}A$ is non-trivial. 
We then have 
\[
\dim H(M \otimes_A^L\K) = \dim \coprod_{s=0}^n\coprod_j\Sigma^{n_j^s}\K \geq n+1 
\geq \text{level}_{\D(A)}^{\ A}(M). 
\]
This completes the proof. 
\qed\hfill

\medskip
Let $\gamma : {\mathcal T} \to {\mathcal U}$ be an exact functor of 
triangulated categories. Then we have the following result.

\begin{lem}\label{lem:2}\cite[Theorem 2.4 (6)]{ABIM} 
$
\text{\em level}_{{\mathcal T}}^C(M) \geq \text{\em level}_{{\mathcal U}}^{\gamma(C)}(\gamma(M)).
$
\end{lem}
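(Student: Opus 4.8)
The plan is to prove Lemma~\ref{lem:2}, the statement that an exact functor $\gamma : {\mathcal T} \to {\mathcal U}$ satisfies $\text{level}_{{\mathcal T}}^C(M) \geq \text{level}_{{\mathcal U}}^{\gamma(C)}(\gamma(M))$, by a direct induction on the thickening filtration. The core observation is that $\gamma$, being exact, preserves all the structural features used to build up the thickenings: it commutes with the shift $\Sigma$ up to natural isomorphism, it takes distinguished triangles to distinguished triangles, it preserves finite coproducts (any exact functor between triangulated categories does, since finite coproducts are iterated cones on zero maps), and it preserves retracts (a retract is detected by idempotents on $\mathrm{Hom}$-sets, which $\gamma$ respects). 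So the strategy is to show by induction on $n$ that
\[
M \in \text{{\tt thick}}^n_{{\mathcal T}}(C) \ \Longrightarrow \ \gamma(M) \in \text{{\tt thick}}^n_{{\mathcal U}}(\gamma(C)),
\]
from which the inequality on levels follows immediately by taking infima.

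First I would handle the base cases. For $n=0$, $\text{{\tt thick}}^0_{{\mathcal T}}(C) = \{0\}$ and $\gamma(0) = 0$, so there is nothing to do. For $n=1$, I would argue that the class of objects $N$ in ${\mathcal U}$ with $\gamma^{-1}(\text{{\tt thick}}^1_{{\mathcal U}}(\gamma(C)))$ — more precisely, the full subcategory of ${\mathcal T}$ on objects $M$ with $\gamma(M) \in \text{{\tt thick}}^1_{{\mathcal U}}(\gamma(C))$ — is strict, full, contains $C$, and is closed under finite coproducts, retracts and shifts; this uses exactly the preservation properties listed above together with closure of $\text{{\tt thick}}^1_{{\mathcal U}}(\gamma(C))$ under those same operations. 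Since $\text{{\tt thick}}^1_{{\mathcal T}}(C)$ is the smallest such subcategory, it is contained in this one, giving the $n=1$ case.

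Next, for the inductive step with $n > 1$, suppose the implication holds for $n-1$ (and for $1$). Consider the full subcategory ${\mathcal S}$ of ${\mathcal T}$ consisting of objects $M$ with $\gamma(M) \in \text{{\tt thick}}^n_{{\mathcal U}}(\gamma(C))$. I claim ${\mathcal S}$ contains every object admitting a triangle $M_1 \to M \to M_2 \to \Sigma M_1$ with $M_1 \in \text{{\tt thick}}^{n-1}_{{\mathcal T}}(C)$ and $M_2 \in \text{{\tt thick}}^1_{{\mathcal T}}(C)$, and that ${\mathcal S}$ is closed under retracts. For the first point, apply $\gamma$ to get a distinguished triangle $\gamma(M_1) \to \gamma(M) \to \gamma(M_2) \to \Sigma\gamma(M_1)$ (using exactness and the natural iso $\gamma\Sigma \cong \Sigma\gamma$); by the inductive hypotheses, $\gamma(M_1) \in \text{{\tt thick}}^{n-1}_{{\mathcal U}}(\gamma(C))$ and $\gamma(M_2) \in \text{{\tt thick}}^1_{{\mathcal U}}(\gamma(C))$, so by definition $\gamma(M) \in \text{{\tt thick}}^n_{{\mathcal U}}(\gamma(C))$. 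For the second point, if $M$ is a retract of an object in ${\mathcal S}$ then $\gamma(M)$ is a retract of an object in $\text{{\tt thick}}^n_{{\mathcal U}}(\gamma(C))$, which is retract-closed. Since $\text{{\tt thick}}^n_{{\mathcal T}}(C)$ is the smallest retract-closed strict full subcategory containing all such $M$, we get $\text{{\tt thick}}^n_{{\mathcal T}}(C) \subseteq {\mathcal S}$, completing the induction.

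The main obstacle — really the only subtle point — is making sure that $\gamma$ genuinely preserves retracts and finite coproducts as claimed, since the definition of thickening is phrased in terms of these operations rather than purely in terms of triangles. This is standard (finite coproducts are built from triangles on zero maps, and idempotent splittings are preserved by any additive functor), so it should be dispatched with a sentence or two citing the relevant closure properties from \cite{ABIM}; after that the argument is a routine filtration induction. Finally, taking $M \in \text{{\tt thick}}^{\text{level}_{{\mathcal T}}^C(M)}_{{\mathcal T}}(C)$ and applying the established implication yields $\gamma(M) \in \text{{\tt thick}}^{\text{level}_{{\mathcal T}}^C(M)}_{{\mathcal U}}(\gamma(C))$, whence $\text{level}_{{\mathcal U}}^{\gamma(C)}(\gamma(M)) \leq \text{level}_{{\mathcal T}}^C(M)$, as desired.
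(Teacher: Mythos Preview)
Your argument is correct: the induction on $n$ showing $\gamma(\text{{\tt thick}}^n_{{\mathcal T}}(C)) \subseteq \text{{\tt thick}}^n_{{\mathcal U}}(\gamma(C))$ is exactly the right idea, and the preservation of finite coproducts, shifts, retracts, and triangles by an exact functor is indeed all that is needed.

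However, note that the paper does not give its own proof of this lemma at all --- it simply cites \cite[Theorem 2.4 (6)]{ABIM} as the source and moves on. So there is no comparison to make: you have supplied the standard elementary argument that underlies the cited result, whereas the paper treats it as a black box imported from the literature. Your write-up is more detailed than what the paper requires, but it is a faithful unpacking of why the cited statement holds.
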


Let $B$ be a simply-connected space and $f : X \to B$ a map. 
Recall from \cite[Theorem II]{FHT} a quasi-isomorphism of DG-modules
\[
\Phi : \Omega(C_*(X) ; C_*(B)) \ \maprightud{\simeq}{} \  C_*(F_f),   
\eqnlabel{add-0}
\]
which is compatible with actions of $\Omega C_*(B)$ and $C_*(\Omega B)$ via 
a quasi-isomorphism of DG algebras
\[
\phi : \Omega C_*(B) \to C_*(\Omega B),
\eqnlabel{add-1}
\]
where $\Omega(N ; C_*(B))$ denotes the cobar construction of
the right $C_*(B)$-comodule $N$. 
We mention that the quasi-isomorphisms $\Phi$ and $\phi$ are induced from
the {\it universal constructions} due to Adams \cite{A}; 
see also \cite[Section 3]{FHT}.

We connect the category $\mathcal{TOP}_B$ with $\D(C_*(\Omega B))$ and $\D(\Omega C_*(B))$. 

\begin{prop}\label{prop:top} Let $B$ be a simply-connected space. One has a commutative diagram 
up to isomorphism 
\[
\xymatrix@C40pt@R25pt{
\DD(C_*(\Omega B))\ar@<1ex>[d]^{\phi^*}& \mathcal{TOP}_B \ar[l]_{C_*(F_{( - )})} 
\ar@/^/[rd]^{C^*(s( \ ))}   \ar[d]_{C_*(s( \ ))}& \\
\DD(\Omega C_*(B)) \ar@<1.5ex>[u]^(0.6){- \otimes^L_{\Omega C_*(B)}C_*(\Omega B) \ \ \ }_{\simeq} &  
\DD^{lf, -}_c(C_*(B)) \ar[l]^{\Omega ( \ ; C_*(B))} \ar[r]_{tD}
 & \DD(C^*(B)).  
}
\]
\end{prop}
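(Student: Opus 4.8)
The plan is to verify the commutativity of the diagram in Proposition \ref{prop:top} by checking that the two composites from $\mathcal{TOP}_B$ to $\D(C^*(B))$ agree on objects, and that the left-hand square and the right-hand triangle commute separately. The left square asserts $\phi^* \circ C_*(F_{(-)}) \cong C_*(s(-))$ as functors into $\D(\Omega C_*(B))$; this is exactly the content of the Adams-type quasi-isomorphism $\Phi : \Omega(C_*(X); C_*(B)) \xrightarrow{\simeq} C_*(F_f)$ of \eqref{add-0}, which is compatible with the $\Omega C_*(B)$-action and the $C_*(\Omega B)$-action via $\phi$ of \eqref{add-1}. Indeed, for $f : X \to B$ in $\mathcal{TOP}_B$, the object $C_*(s(f)) = C_*(X)$ is regarded as a $C_*(B)$-comodule, hence the cobar construction $\Omega(C_*(X); C_*(B))$ is an $\Omega C_*(B)$-module; applying the base-change equivalence $-\otimes^L_{\Omega C_*(B)} C_*(\Omega B)$ (which is $\phi^*$ up to the inverse equivalence recorded in the diagram, since $\phi$ is a quasi-isomorphism and \cite[Proposition 4.2]{K-M} applies) sends it to $C_*(F_f)$ by $\Phi$. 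Naturality in $f$ follows from the naturality of the universal constructions of Adams.

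Next I would address the right-hand triangle, which asserts $tD \circ \Omega(\ ; C_*(B)) \cong C^*(s(-))$ as functors into $\D(C^*(B))$. Here one first needs $C_*(X)$, viewed as a $C_*(B)$-comodule, to land in $\D^{lf,-}_c(C_*(B))$: local finiteness of $H_*(X)$ and $H_*(B)$ comes from the standing hypothesis that cohomology is locally finite, and boundedness above of the chain complex is automatic. Then, by Theorem \ref{thm:diagram}(ii), applied with $C = C_*(B)$ (simply-connected since $B$ is simply-connected, with $H(C)$ locally finite), we have $\Theta(\Omega(N; C_*(B))) \cong B(N^\vee; C^*(B))^\vee = tD \circ R_{C^*(B)} \circ tD(N)$; but the relevant composite in the present diagram is just $tD \circ \Omega(\ ; C_*(B))$ landing in $\D(C^*(B))$ after transporting along $\Theta$, and unwinding the definitions of $tD$ and the functor $F$, the module $tD(\Omega(C_*(X); C_*(B)))$ is identified with the bar construction dual $B(C_*(X); C_*(B))^\vee = \Omega(C_*(X); C_*(B))^\vee$, which carries a natural $C^*(B) = C_*(B)^\vee$-module structure. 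The Eilenberg-Moore quasi-isomorphism relative to the fibration $F_f \to X \to B$ identifies this, as a $C^*(B)$-module, with $C^*(X)$ — this is the classical cobar-to-cochain comparison, dual to \eqref{add-0}, and is where the Eilenberg-Moore machinery of \cite{FHT, FHT2} enters.

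Combining the two pieces gives the commutativity of the whole diagram up to isomorphism; the final step is to check that all isomorphisms are natural in the object $f$ of $\mathcal{TOP}_B$, which again reduces to naturality of the Adams universal constructions and of the bar/cobar dualities, together with functoriality of the twisted tensor products with respect to morphisms in $\mathcal{TOP}_B$. I expect the main obstacle to be the careful bookkeeping of module versus comodule structures under the dualization functor $tD$: one must make sure that the $C^*(B)$-module structure on $tD(\Omega(C_*(X); C_*(B)))$ coming from the composite $F_* \circ (\ )^\vee$ genuinely agrees with the $C^*(B)$-module structure on $C^*(X)$ induced by $C^*(f)$, and that this identification is the one compatible with Theorem \ref{thm:diagram}. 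This is precisely the point where the local finiteness hypothesis is essential (so that $(\ )^\vee$ is an equivalence on the relevant subcategories and double-dualization maps are quasi-isomorphisms), and where one invokes the locally finite TV-model of $C^*(B)$ used in the proof of Proposition \ref{prop:equivalence}.
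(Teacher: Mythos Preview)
Your treatment of the left-hand square is essentially the paper's: the quasi-isomorphism $\Phi$ of \eqref{add-0}, compatible with $\phi$ of \eqref{add-1}, gives exactly $\phi^*(C_*(F_f))\cong \Omega(C_*(X);C_*(B))$ in $\D(\Omega C_*(B))$. (Your opening sentence ``$\phi^*\circ C_*(F_{(-)})\cong C_*(s(-))$'' drops the cobar functor on the right, but your subsequent explanation makes clear you mean $\Omega(\,\cdot\,;C_*(B))\circ C_*(s(-))$, which is correct.)

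The genuine problem is that you have misread the right-hand triangle. The arrow $\Omega(\ ;C_*(B))$ points \emph{left}, from $\D^{lf,-}_c(C_*(B))$ to $\D(\Omega C_*(B))$; it does not enter the right triangle at all. The right triangle consists of the three arrows
\[
\mathcal{TOP}_B \xrightarrow{\,C_*(s(\ ))\,} \D^{lf,-}_c(C_*(B)) \xrightarrow{\,tD\,} \D(C^*(B))
\quad\text{and}\quad
\mathcal{TOP}_B \xrightarrow{\,C^*(s(\ ))\,} \D(C^*(B)),
\]
and it commutes \emph{by definition}: for $f:X\to B$ the functor $tD$ sends the $C_*(B)$-comodule $C_*(X)$ to its $\K$-linear dual $C_*(X)^\vee=C^*(X)$, with the right $C^*(B)=C_*(B)^\vee$-module structure induced from the comodule structure, and this is precisely the $C^*(B)$-module structure coming from $C^*(f)$. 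No appeal to Theorem \ref{thm:diagram}(ii), no Eilenberg-Moore comparison, and no TV-model is needed here; the paper's proof of this triangle is the single phrase ``by definition''.

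So your second paragraph is an overcomplicated argument for the wrong assertion. The machinery you invoke there (Theorem \ref{thm:diagram}, the equivalence $\Theta$, the bar/cobar duality) is what the paper uses \emph{afterwards}, in the proof of Theorem \ref{thm:main}, to relate $\D(\Omega C_*(B))$ and $\D(C^*(B))$; Proposition \ref{prop:top} itself is only the elementary observation that $\mathcal{TOP}_B$ maps compatibly into both pictures.
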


\begin{proof}
The quasi-isomorphisms in (3.2) and (3.3) enable us to conclude that the left hand-side square is commutative up to isomorphism.  
By definition, the right hand-side triangle is commutative. 
\end{proof}

\begin{rem}\label{rem:1}
We write $\eta$ and $\nu$ for 
the composites 
$\text{-}\otimes^L_{(B\Omega C)^\vee}C^\vee \circ tD\circ R_{\Omega C_*(B)}\circ \phi^*$ 
and 
$\text{-} \otimes^L_{\Omega C_*(B)}C_*(\Omega B)\circ \Theta^{-1}\circ tD \circ R_{C^*(B)}$, respectively; 
see Theorem \ref{thm:diagram} and 
Proposition \ref{prop:top}. 
It follows that $\eta (C_*(\Omega B)) \cong \eta(C_*(F_{(\ast \to B)})) \cong C^*(\ast) \cong \K_{C^*(B)}$ and 
$\nu(C^*(B)) \cong \nu C^*(s(id : B \to B)) \cong C_*(F_{(id)}) \cong C_*(\ast) \cong \K_{C_*(\Omega B)}$. Moreover, we see that 
$\eta(\K) \cong \eta(C_*(F_{(id :B \to B)})) \cong C^*(B)$ and $\nu(\K) \cong \nu(C^*(s(\ast \to B))) \cong C_*(F_{(\ast \to B)}) 
\cong C_*(\Omega B)$. 
\end{rem}

We are now ready to prove our main theorem. 

\medskip
\noindent
{\it Proof of Theorem \ref{thm:main}.}
It follows from \cite[Proposition 19.2]{F-H-T} that 
${B}(C_*(\Omega B); C_*(\Omega B))$ is a 
$C_*(\Omega B)$-semifree resolution of $\K$. Then the result 
\cite[Proposition 6.7]{FHT2} yields that 
\[
C_*(X)\simeq C_*(F_f)\otimes_{C_*(\Omega B)}
{B}(C_*(\Omega B); C_*(\Omega B)) \simeq
C_*(F_f)\otimes^L_{C_*(\Omega B)}\K.
\]
By virtue of Lemma \ref{lem:1}, 
we have the first inequality.  
 
Let $F_f \to E_X \to B$ be the fibration associated with the map 
$f : X \to B$. Since there exists a homotopy equivalence $j : X \to E_X$
which is in ${\mathcal TOP}_B$, it follows that, as vector spaces,  
\[
H^*(C^*(X)\otimes^L_{C^*(B)}\K)\cong \text{Tor}_{C^*(B)}(C^*(X), \K)
\cong \text{Tor}_{C^*(B)}(C^*(E_X), \K) \cong H^*(F_f). 
\]
Observe that the third isomorphism is induced by the Eilenberg-Moore
map; see for example \cite[Theorem 3.3]{G-M}. 
By applying Lemma \ref{lem:1} again,  
one has the second inequality.   

It follows from Lemma \ref{lem:2}, Theorem \ref{thm:diagram} (i), Proposition \ref{prop:top}  
and Remark \ref{rem:1} that 
\[
\text{level}_{\D(C_*(\Omega B))}^{\ C_*(\Omega B)}(C_*(F_f))
\geq \text{level}_{\D(C^*(B))}^{\ \K}(C^*(X)).
\]
Theorem \ref{thm:diagram} (ii) yields 
the converse inequality. 
The same argument as above works well to obtain the equality in (2). 
\hfill \qed

\begin{rem}\label{rem:algebraic_versions}
Let $C$ be a co-augmented cocomplete $DG$ coalgebra with $H(C)$ locally finite 
and $M$ an object in $\D^{lf, -}(C)$. Then Theorem \ref{thm:diagram} yields 
algebraic versions of equalities in Theorem \ref{thm:main}. Indeed, we have equalities 
\[
\text{level}_{\D(\Omega C)}^{\Omega C}(\Omega(M; C)) = \text{level}_{\D(C^\vee)}^\K(M^\vee) \ \ \text{and}  
\]
\[
\text{level}_{\D(C^\vee)}^{C^\vee}(M^\vee) = \text{level}_{\D(\Omega C)}^\K(\Omega(M; C)). 
\]
\end{rem}

\begin{rem}\label{rem:string_topology}
As mentioned in the Introduction, the string topology category $\mathsf{St}_{M}$ for 
a simply-connected oriented manifold $M$ is a full subcategory of $\D(C_*(\Omega M))$; see \cite{Shamir}. 
Then Proposition \ref{prop:top} and Theorem \ref{thm:diagram} may generalize the result \cite[Theorem 2.8]{B-C-T} 
on the Dwyer-Kan equivalence between $\mathsf{St}_{M}$ and the full subcategory of $\D(C^*(M))$-modules consisting 
of objects in the image of the functor $C^*(s( \  ))$.  
This will be discussed in a forthcoming paper \cite{K-M_Loop}. 
\end{rem}

\section{Proofs of Propositions \ref{prop:L}, \ref{prop:section} and  
\ref{prop:level_2^n}}

We here recall some full subcategories of a triangulated category 
${\mathcal T}$ before proving Proposition \ref{prop:L}.

Let ${\mathcal A}$ be a subcategory of ${\mathcal T}$ and 
${\tt add}^{\Sigma}({\mathcal A})$ the smallest full subcategory of 
${\mathcal T}$ that contains ${\mathcal A}$ and is closed under finite
coproducts, all shifts and isomorphisms. 
The category ${\tt smd}({\mathcal A})$ is defined to be the smallest
full subcategory of ${\mathcal T}$ that contains ${\mathcal A}$ and is
closed under retracts. For full subcategories ${\mathcal A}$ and 
${\mathcal B}$ of ${\mathcal T}$, let ${\mathcal A}\ast{\mathcal B}$ be
the full subcategory whose objects $L$ occur in a triangle 
$M \to L \to N \to \Sigma M$ with $M \in {\mathcal A}$ and 
$N \in {\mathcal B}$. Then we see that 
$
{\tt thick}_{\mathcal T}^n(C)= {\tt smd}({\tt add}^{\Sigma}(C)^{{\ast}n});
$ 
see \cite{B-B} and \cite[2.2.1]{ABIM}.

A triangular inequality on levels is described in the following lemma.

\begin{lem}{\rm (}cf. \cite[The proof of 6.3.2(3)]{S}{\rm )}\label{lem:thick}
Let ${\mathcal T}$ be a triangulated category and $C$, $C'$ objects in 
${\mathcal T}$. If $\text{\em level}_{\mathcal T}^C M\leq n$ 
and  $\text{\em level}_{\mathcal T}^{C'} C \leq l$, then 
  $\text{\em level}_{\mathcal T}^{C'} M\leq nl$. 
\end{lem}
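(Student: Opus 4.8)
The plan is to unwind the characterization $\text{\tt thick}_{\mathcal T}^n(C)= {\tt smd}({\tt add}^{\Sigma}(C)^{{\ast}n})$ recorded just above the statement, and to show that applying the hypothesis $\text{level}_{\mathcal T}^{C'}C\leq l$ termwise inside an $n$-fold extension produces an $nl$-fold extension over $C'$. Concretely, I would first reduce to the case where $M$ lies literally in ${\tt add}^{\Sigma}(C)^{\ast n}$ rather than merely in its closure under retracts: a retract of an object in $\text{\tt thick}^{nl}_{\mathcal T}(C')$ is again in $\text{\tt thick}^{nl}_{\mathcal T}(C')$ since the latter is closed under retracts by construction, so it suffices to handle the ${\ast}n$-part. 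Thus, unravelling the hypothesis $\text{level}_{\mathcal T}^C M\leq n$, there is a chain of triangles exhibiting $M$ as built in $n$ stages from copies of shifts of $C$.

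Next I would establish the key lemma that $\text{\tt thick}^1_{\mathcal T}(C)\subseteq \text{\tt thick}^l_{\mathcal T}(C')$, which is immediate from $\text{level}_{\mathcal T}^{C'}C\leq l$: indeed $C\in\text{\tt thick}^l_{\mathcal T}(C')$, and $\text{\tt thick}^l_{\mathcal T}(C')$ is already closed under finite coproducts, retracts and shifts (being $\geq \text{\tt thick}^1$ of $C'$ and built up through $\ast$ and ${\tt smd}$), so it contains all of ${\tt smd}({\tt add}^\Sigma(C)) = \text{\tt thick}^1_{\mathcal T}(C)$. The second ingredient I need is the subadditivity of thickenings under the $\ast$-operation: $\text{\tt thick}^{a}_{\mathcal T}(C')\ast \text{\tt thick}^{b}_{\mathcal T}(C') \subseteq \text{\tt thick}^{a+b}_{\mathcal T}(C')$, which follows directly from the inductive definition of the thickenings together with associativity of $\ast$ up to the relevant closure operations (this is the octahedral-axiom bookkeeping already implicit in \cite{ABIM, B-B}).

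With these two facts in hand the proof is a straightforward induction on $n$. For $n=1$, $M\in\text{\tt thick}^1_{\mathcal T}(C)\subseteq\text{\tt thick}^l_{\mathcal T}(C')$ by the key lemma, so $\text{level}_{\mathcal T}^{C'}M\leq l = 1\cdot l$. For the inductive step, write a triangle $M_1\to M\to M_2\to\Sigma M_1$ with $M_1\in\text{\tt thick}^{n-1}_{\mathcal T}(C)$ and $M_2\in\text{\tt thick}^1_{\mathcal T}(C)$ (and then close under retracts at the end). By induction $M_1\in\text{\tt thick}^{(n-1)l}_{\mathcal T}(C')$, and by the key lemma $M_2\in\text{\tt thick}^l_{\mathcal T}(C')$; hence $M\in\text{\tt thick}^{(n-1)l}_{\mathcal T}(C')\ast\text{\tt thick}^{l}_{\mathcal T}(C')\subseteq\text{\tt thick}^{nl}_{\mathcal T}(C')$ by subadditivity, and taking ${\tt smd}$ absorbs the retract closure. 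Therefore $\text{level}_{\mathcal T}^{C'}M\leq nl$.

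The one genuine obstacle is the subadditivity statement $\text{\tt thick}^{a}\ast\text{\tt thick}^{b}\subseteq\text{\tt thick}^{a+b}$: proving it cleanly requires the identity $({\tt smd}\,\mathcal A)\ast({\tt smd}\,\mathcal B)\subseteq{\tt smd}(\mathcal A\ast\mathcal B)$ and associativity of $\ast$ up to ${\tt smd}$, both of which are standard but depend on the octahedral axiom. I would cite these from \cite[2.2.4]{ABIM} / \cite{B-B} rather than reprove them; the rest is the bookkeeping induction above. (Alternatively, one can avoid this by quoting \cite[The proof of 6.3.2(3)]{S}, to which the statement already refers, and merely indicate that the same argument applies verbatim in our setting.)
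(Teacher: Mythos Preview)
Your proposal is correct and follows essentially the same route as the paper: induction on $n$, the base case ${\tt thick}^1_{\mathcal T}(C)\subseteq{\tt thick}^l_{\mathcal T}(C')$, and the inductive step via a triangle $M_1\to M'\to M_2\to\Sigma M_1$ together with the identity ${\tt smd}({\tt smd}\,\mathcal A\ast{\tt smd}\,\mathcal B)={\tt smd}(\mathcal A\ast\mathcal B)$, which the paper cites as \cite[Lemma 2.2.1]{B-B} (exactly the ingredient you flagged as the one genuine obstacle).
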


\begin{proof}
It suffices to prove that if  
$M \in {\tt thick}_{\mathcal T}^n(C)$ and 
 $C \in {\tt thick}_{\mathcal T}^l(C')$, then 
 $M \in {\tt thick}_{\mathcal T}^{nl}(C')$. 

Since the thickening  ${\tt thick}_{\mathcal T}^l(C')$ 
is closed under finite coproducts, all shifts and retracts, 
it follows that 
${\tt add}^{\Sigma}(C) \subset {\tt thick}_{\mathcal T}^l(C')$ and hence 
${\tt thick}_{\mathcal T}^1(C) 
\subset {\tt thick}_{\mathcal T}^l(C')$. 
Assume that ${\tt thick}_{\mathcal T}^i(C) 
\subset {\tt thick}_{\mathcal T}^{il}(C')$ for $i \leq n-1$. 
For any object $M \in {\tt thick}_{\mathcal T}^n(C)$, there exists a
 triangle $M_1 \to M' \to M_2 \to \Sigma M_1$ such that 
$M$ is a retract of $M'$, $M_1 \in {\tt thick}_{\mathcal T}^{n-1}(C)$
 and $M_2 \in {\tt thick}_{\mathcal T}^1(C)$. This yields that 
\begin{eqnarray*}
M &\in & {\tt smd}({\tt thick}_{\mathcal T}^{(n-1)}(C)\ast
{\tt thick}_{\mathcal T}^1(C)) \\
&\subset&  {\tt smd}({\tt smd}({\tt add}^{\Sigma}(C')^{\ast (n-1)l})
\ast {\tt smd}({\tt add}^{\Sigma}(C')^{\ast l})) \\
&=& {\tt smd}
({\tt add}^{\Sigma}(C')^{\ast (n-1)l} \ast 
{\tt add}^{\Sigma}(C')^{\ast l}) \\
&=& {\tt thick}_{\mathcal T}^{nl}(C'). 
\end{eqnarray*}
Observe that the first equality follows from \cite[Lemma 2.2.1]{B-B}.
This completes the proof. 
\end{proof}

\noindent
{\it Proof of Proposition \ref{prop:L}.}
Lemma \ref{lem:thick} and Theorem \ref{thm:main} induce the
inequalities. In fact, we see that 
\begin{eqnarray*}
\text{level}_{\D(C_*(\Omega B))}^{\ C_*(\Omega B)}(C_*(F_f))
&=& \text{level}_{\D(C^*(B))}^{\ \K}(C^*(X)) \\
&\leq&  \text{level}_{\D(C^*(B))}^{\ \K}(C^*(B))\cdot
  \text{level}_{\D(C^*(B))}^{\ C^*(B)}(C^*(X)) \\
&=&
\text{level}_{\D(C_*(\Omega B))}^{\ C_*(\Omega B)}(\K)\cdot
\text{level}_{\D(C^*(B))}^{\ C^*(B)}(C^*(X)) \\
&\leq&\dim H_*(B)\cdot \text{level}_{\D(C^*(B))}^{\ C^*(B)}(C^*(X)) . 
\end{eqnarray*}
We here observe that $C_*(F_{id})\cong \K$ in 
$\D(C_*(\Omega B))$ for the homotopy fibre $F_{id}$ of the identity map 
on $B$. 
The second inequalities follow from the same consideration as above. 
Observe that the based loop space $\Omega B$ is 
the homotopy fibre of the map $* \to B$.  
\hfill\qed

\medskip
\noindent
{\it Proof of Proposition \ref{prop:section}.}
By replacing the square 
\[
\xymatrix@C30pt@R15pt{
B \ar[r]^s \ar[d]_{=} & X \ar[d]^\pi \\
B \ar[r]_{=}  & B, 
}
\]
which is homotopy commutative, to a totally fibred square, we have a
commutative diagram 
\[
\xymatrix@C30pt@R15pt{
 & B \ar[r]^s \ar[d]^\simeq_{\iota_2} & X \ar[d]^{\iota_1}_\simeq \\
\Omega F_\pi \ar[r] & B' \ar[r] & X'
}
\]
in which $\iota_1$ and $\iota_2$ are homotopy equivalences and bottom
sequence is a fibration; see \cite[Propositions 3.2.2 and 3.2.3]{Nei}.  
The map $\iota_1$ gives rise to an equivalence 
$C^*(\iota_1)^* : \D(C^*(X)) \to \D(C^*(X'))$ of triangulated
categories. It is readily seen that $C^*(\iota_1)^*(C^*(B))\cong C^*(B')$. 
This yields that 
\[
\text{level}_{\D(C^*(X'))}^{\ C^*(X')}C^*(B') = \text{level}_{\D(C^*(X))}^{\ C^*(X)}C^*(B).
\]

In view of the Leray-Serre spectral sequence of the path-loop fibration 
$\Omega F_\pi \to PF_\pi \to F_\pi$, we see that $H^*(F_\pi; \K)=\K$ 
if and only if 
$H^*(\Omega F_\pi; \K)$ and $H^*(F_\pi; \K)$ are of finite dimension. 
Observe that $F_\pi$ is simply-connected since $\pi$ has a right
inverse. 
By Corollary \ref{cor:finite} (2), we have the result.   
\hfill\qed 

\medskip
Before proving Proposition \ref{prop:level_2^n}, we consider a special case
for the assertion.    

\begin{prop}\label{prop:level_2}Let $F \to X \to B$ be a fibration with 
$B$ simply-connected. Suppose that $H^*(F; \K)\cong H^*(S^l; \K)$ 
as a graded vector space. Then one has 
\[
\text{\em level}_{\DD(C^*(B; \K))}^{\ C_*(B; \K)}C^*(X; \K) \leq 2.
\] 
Moreover, 
$\text{\em level}_{\DD(C^*(B; \K))}^{\ C_*(B; \K)}C^*(X; \K) = 2$ 
if and only if $H^*(X; \K)$ is not a free $H^*(B; \K)$-module. 
\end{prop}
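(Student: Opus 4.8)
First I would establish the upper bound $\text{level} \leq 2$. Consider the fibration $F \to X \to B$ with $H^*(F;\K) \cong H^*(S^l;\K)$. The Eilenberg--Moore spectral sequence (or directly the Koszul model point of view) gives a semifree model for $C^*(X;\K)$ over $C^*(B;\K)$; because $H^*(F)$ is concentrated in degrees $0$ and $l$, one expects a two-step filtration built from $C^*(B;\K)$, reflecting a cofibre sequence $\Sigma^{l} (\text{something}) \to \text{model} \to C^*(B;\K)$ realized by a single attaching map. Concretely, one can use a relative Koszul--Sullivan type model $C^*(B)\otimes \Lambda(x_l)$ with $|x_l|=l$ (or the appropriate graded-commutative-free-on-one-generator model, working over an arbitrary field via the TV-model/bar-construction machinery referenced above), and the differential $d x_l \in C^*(B)$ is a cocycle representing the ``Euler class''. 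This exhibits $C^*(X;\K)$ as the cone of a morphism $\Sigma^{l-1}C^*(B;\K) \xrightarrow{\cdot e} C^*(B;\K)$ in $\D(C^*(B;\K))$ (up to the quasi-isomorphism $C^*(B)\otimes\Lambda(x_l)\simeq C^*(X)$), hence $C^*(X;\K) \in {\tt thick}^2_{\D(C^*(B))}(C^*(B))$, giving $\text{level} \leq 2$.

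Next, the ``only if'' direction of the equivalence: if $\text{level}_{\D(C^*(B))}^{C^*(B)}C^*(X) = 1$, then by definition $C^*(X;\K) \in {\tt thick}^1(C^*(B;\K)) = {\tt smd}({\tt add}^\Sigma(C^*(B)))$, so $C^*(X;\K)$ is a retract in $\D(C^*(B))$ of a finite coproduct of shifts of $C^*(B;\K)$. Passing to cohomology (and using that $H^*(X)$, being the homology of a bounded-below, locally finite DG module, detects this), I would argue that $H^*(X;\K)$ must then be a retract of a free $H^*(B;\K)$-module; since here it sits inside a two-step building with the relevant gradings, being a retract forces it to be genuinely free over $H^*(B;\K)$ — the point is that the Euler class $e$ must act as zero, which trivializes the cone into a direct sum $C^*(B)\oplus \Sigma^{l}C^*(B)$ on homology. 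Conversely, if $H^*(X;\K)$ is free over $H^*(B;\K)$, then necessarily of rank $2$ with generators in degrees $0$ and $l$ (from the $S^l$-fibre), and I would promote this to a quasi-isomorphism $C^*(B)\oplus \Sigma^l C^*(B) \simeq C^*(X)$ in $\D(C^*(B))$ by choosing representing cocycles and checking the comparison is a quasi-isomorphism (here the freeness guarantees no obstruction/extension, i.e. the Euler class vanishes in $H^*(B)$), whence $\text{level} = 1$. Combining with $\text{level} \leq 2$ gives: $\text{level} = 2$ iff $H^*(X;\K)$ is not free over $H^*(B;\K)$.

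The main obstacle I anticipate is the ``level $1$ implies freeness'' step: a priori ${\tt thick}^1$ allows retracts, and passing from ``$C^*(X)$ is a retract of a coproduct of shifts of $C^*(B)$ in the derived category'' to ``$H^*(X)$ is a free $H^*(B)$-module'' is not purely formal — one must use that the module structure is detected on cohomology and that the only possible shifts occurring are $0$ and $l$ (forced by $H^*(F)\cong H^*(S^l)$ via the Eilenberg--Moore or Leray--Serre spectral sequence), so the retract must in fact be the whole coproduct $C^*(B)\oplus\Sigma^l C^*(B)$, not a proper summand. Handling the subtlety that the field $\K$ has arbitrary characteristic — so $C^*(B;\K)$ is only an $E_\infty$- / commutative-up-to-homotopy algebra and one works with the bar/cobar and TV-model constructions introduced above rather than naive Sullivan models — is where care is needed, but the earlier results (Proposition \ref{prop:equivalence}, Theorem \ref{thm:diagram}, and the Eilenberg--Moore quasi-isomorphism) supply exactly the tools to make the two-step building and the comparison map rigorous.
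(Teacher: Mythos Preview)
Your approach is essentially correct and matches the alternative argument the paper records in the Remark immediately following its proof, but it is \emph{not} the route the paper's main proof takes.

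For the bound $\text{level}\le 2$, the paper does not build a two-generator semifree model for $C^*(X)$ over $C^*(B)$. Instead it invokes the duality equality of Theorem~\ref{thm:main}(2) to reduce to showing $\text{level}_{\D(C_*(\Omega B))}^{\,\K}C_*(F_f)\le 2$ on the \emph{chain} side, then replaces $C_*(\Omega B)$ by a connected sub-DGA $R$ and constructs by hand a triangle $\Sigma^{-l}\K \to C_*(S_l)/N \to \K$ in $\D(R)$. Your cochain-side argument via a minimal semifree resolution (two generators because $\text{Tor}_{C^*(B)}(C^*(X),\K)\cong H^*(F)$ is two-dimensional, hence a single attaching triangle) is exactly what the paper's Remark sketches, citing \cite[Theorem~4.1]{ABIM}. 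So both routes work; the paper's main proof exercises the duality machinery of Section~3, while yours is more self-contained on the cochain side.

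For the equivalence $\text{level}=1 \Leftrightarrow H^*(X)$ free, your worry about the ``only if'' direction is misplaced and your proposed fix is more complicated than needed. The paper handles this in Lemma~\ref{lem:level_1}: if $C^*(X)\in{\tt thick}^1(C^*(B))$ then $C^*(X)$ is a retract in $\D(C^*(B))$ of a finite sum of shifts of $C^*(B)$; taking homology, $H^*(X)$ is a direct summand of a free graded $H^*(B)$-module, hence projective, hence free because $H^*(B)$ is a connected graded $\K$-algebra (see \cite[p.~274, Remark~1]{F-H-T}). No appeal to the particular shifts $0$ and $l$ or to the Euler class is required. For the converse, the paper does not lift generators as you propose; it quotes \cite[Corollary~7.3]{K5} to get $\text{level}_{\D(C^*(B))}^{C^*(B)}C^*(X)\le \text{level}_{\D(H^*(B))}^{H^*(B)}H^*(X)=1$ directly. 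Your lifting argument also works (choose cocycles for a free basis and check the resulting map of $C^*(B)$-modules is a quasi-isomorphism), but the cited corollary packages this once and for all.
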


The following lemma serves to prove Proposition \ref{prop:level_2}. 

\begin{lem}\label{lem:level_1} 
Let $X \to B$ be an object in ${\mathcal TOP}_B$. Then 
$\text{\em level}_{\DD(C^*(B))}^{\ C_*(B)}C^*(X)=1$ if and only if 
$H^*(X; \K)$ is a free $H^*(B; \K)$-module. 
\end{lem}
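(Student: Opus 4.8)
The plan is to unwind the definition of $\text{level}^{C^*(B)}_{\D(C^*(B))}C^*(X)=1$, which by the description of the first thickening means exactly that $C^*(X)$ lies in $\text{{\tt thick}}^1_{\D(C^*(B))}(C^*(B))={\tt smd}({\tt add}^\Sigma(C^*(B)))$, i.e.\ $C^*(X)$ is a retract in $\D(C^*(B))$ of a finite coproduct of shifts of $C^*(B)$. First I would note that such a finite coproduct of shifts is quasi-isomorphic to $C^*(B)\otimes_\K W$ for a finite-dimensional graded vector space $W$ with zero differential, and then observe that, over the simply-connected cochain algebra $C^*(B)$, every object has a minimal semifree model, so that being a retract of $C^*(B)\otimes_\K W$ forces $C^*(X)$ itself to be quasi-isomorphic in $\D(C^*(B))$ to $C^*(B)\otimes_\K V$ with $V$ a (possibly infinite, but locally finite by the standing hypothesis) graded $\K$-vector space and zero differential; here one uses that retracts of semifree modules with zero internal differential are again of that shape, which follows from the uniqueness of minimal models. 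Conversely, if $C^*(X)\simeq C^*(B)\otimes_\K V$ in $\D(C^*(B))$ with trivial differential on the tensor factor, then $C^*(X)$ is manifestly a (finite, under the locally finite/bounded hypotheses relevant to $\mathcal{TOP}_B$, or in general a) coproduct of shifts of $C^*(B)$, hence in $\text{{\tt thick}}^1$, so the level is $\le 1$; and it is $\ge 1$ as long as $C^*(X)\not\simeq 0$, which holds since $X$ is non-empty.

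The second half of the argument is the translation ``$C^*(X)\simeq C^*(B)\otimes_\K V$ in $\D(C^*(B))$ with $d=0$ on $V$'' $\iff$ ``$H^*(X;\K)$ is free over $H^*(B;\K)$.'' For the forward direction, applying $H^*(-)$ to the quasi-isomorphism gives $H^*(X)\cong H^*(C^*(B)\otimes_\K V)$, and since the differential of $C^*(B)\otimes_\K V$ is that of $C^*(B)$ tensored up, a spectral sequence (or Künneth) argument over the field $\K$ gives $H^*(X)\cong H^*(B)\otimes_\K V$ as an $H^*(B)$-module, which is free. For the converse, suppose $H^*(X)$ is $H^*(B)$-free on a homogeneous basis; choose cocycle representatives in $C^*(X)$ for the basis elements, let $V$ be their span, and use them to build a $C^*(B)$-module map $C^*(B)\otimes_\K V\to C^*(X)$ (semifree source, so the map is defined by lifting the cocycles through $C^*(f):C^*(B)\to C^*(X)$), which on homology is the freeness isomorphism, hence a quasi-isomorphism; thus $C^*(X)$ is isomorphic in $\D(C^*(B))$ to a coproduct of shifts of $C^*(B)$.

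The main obstacle is the forward implication of the \emph{first} step: deducing that a \emph{retract} of a sum of shifts of $C^*(B)$ is again (up to quasi-isomorphism) such a sum, rather than merely a summand that could a priori have nontrivial internal differential. The clean way to handle this is through minimal semifree resolutions over the simply-connected DG algebra $C^*(B)$ (as invoked in the proof of Lemma \ref{lem:1} above): an object is in $\text{{\tt thick}}^1(C^*(B))$ iff its minimal model has zero differential on the generating space, and this property passes to retracts because a retract of a minimal semifree module is minimal semifree with a subspace of generators (up to isomorphism of minimal models, which are unique). Once this is in place, everything else is the routine Künneth/freeness bookkeeping over a field sketched above, and the ``level $=1$'' case and the ``level $\le 2$, with equality iff not free'' statement of Proposition \ref{prop:level_2} then follow by combining this lemma with the triangle coming from the spherical fibration $S^l\to X\to B$.
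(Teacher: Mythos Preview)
Your argument is essentially correct, but the two directions compare differently with the paper's proof.

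For the forward implication the paper is much more direct: it simply applies $H^*$ to the retraction, obtaining $H^*(X)$ as a retract of a finite free graded $H^*(B)$-module, hence projective, hence free (graded projectives over a connected graded $\K$-algebra are free; cf.\ \cite[p.~274, Remark~1]{F-H-T}). Your route instead tries first to upgrade the derived-category retract to a quasi-isomorphism $C^*(X)\simeq C^*(B)\otimes V$ with trivial internal differential, and only then reads off freeness on cohomology. This works, but the step you flag as the main obstacle---that a retract of $C^*(B)\otimes W$ with zero internal differential is again of that form---is not really a formal consequence of uniqueness of minimal models: splitting the DG idempotent on $C^*(B)\otimes W$ into \emph{free} summands needs exactly the ``projective $\Rightarrow$ free over a connected graded ring'' fact that the paper invokes directly on homology. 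So your detour through the DG level buys nothing extra, and the justification you offer for it is incomplete as stated.

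For the converse your construction is more explicit and self-contained than the paper's. The paper observes that a free $H^*(B)$-module has level $1$ in $\D(H^*(B))$ and then appeals to \cite[Corollary~7.3]{K5} to pass from $\D(H^*(B))$ to $\D(C^*(B))$. Your hands-on quasi-isomorphism $C^*(B)\otimes V\to C^*(X)$, built by lifting a free $H^*(B)$-basis to cocycles, is precisely the kind of argument underlying that citation and avoids the external reference.

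One small caveat, shared with the paper's formulation: the converse tacitly needs the free $H^*(B)$-module $H^*(X)$ to have \emph{finite} rank, since $\text{{\tt thick}}^1$ admits only finite coproducts; local finiteness of $H^*(X)$ alone does not force this. In the intended application to Proposition~\ref{prop:level_2} the fibre is a sphere, so the rank is two and no issue arises.
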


\begin{proof}
Suppose that $\text{level}_{\D(C^*(B))}^{\ C_*(B)}C^*(X)=1$. 
Then by definition, we see that $C^*(X)$ is a retract of a free
 $C^*(B)$-module. Therefore $H^*(X)$ is a projective $H^*(B)$-module 
and hence $H^*(X)$ is a free $H^*(B)$-module; see 
\cite[page 274 Remark 1]{F-H-T} for example. 

We see that 
$\text{level}_{\D(H^*(B))}^{\ H_*(B)}H^*(X)=1$
 if $H^*(X)$ is a free $H^*(B)$-module. 
The result \cite[Corollary 7.3]{K5} implies that 
$\text{level}_{\D(C^*(B))}^{\ C_*(B)}C^*(X)$ is less than or equal to  
$\text{level}_{\D(H^*(B))}^{\ H_*(B)}H^*(X)$. 
This completes the proof.   
\end{proof}

\noindent
{\it Proof of Proposition \ref{prop:level_2}.} 
Let $S_l$ be the homotopy fibre of the projection $X \to B$. We observe
that $H^*(S_l; \K)\cong H^*(F; \K)\cong H^*(S^l; \K)$. 
In order to prove the proposition, 
it suffices to show that 
$\text{level}_{\D(C_*(\Omega B))}^{\ \K}C_*(S_l)\leq 2$.  This follows from Theorem \ref{thm:main} (2). 
We define a DG subalgebra $R$ of $C_*(\Omega B)$ by 
$R_0=\K$, $R_1=\text{Ker} \ d$ and $R_{\geq 2}= C_{\geq 2}(\Omega B)$. 
It is immediate that the inclusion $i : R \to C_*(\Omega B)$ is a
quasi-isomorphism. Then the map $i$ induces 
an equivalence of categories 
$i^* : \D(C_*(\Omega B)) \to \D(R)$. 
Moreover, we have $i^*(\K) = \K$ and $i^*(C_*(S_l))=C_*(S_l)$. 
Therefore, we conclude that 
\[
\text{level}_{\D(C_*(\Omega B))}^{\ \K}C_*(S_l) = 
\text{level}_{\D(C_*(R))}^{\ \K}C_*(S_l).
\]

Let $N$ be a DG $R$-submodule of $C_*(S_l)$ defined by 
$N_{\leq l-1}=0$, $N_l=\text{Im} \ d$ and $N_i =C_i(S_l)$ for $i> l$. 
Since $N$ is acyclic, it follows that the projection  
$C_*(S_l) \to C_*(S_l)/N$ is a quasi-isomorphism 
of $R$-modules.  
Moreover, we can construct a triangle in $\D(R)$ of the form 
$
\Sigma^{-l}\K \to C_*(S_l)/N \to \K \to .
$ 
In fact, the projection from 
the quotient $(C_*(S_l)/N)\bigl/\bigr. \Sigma^{-l}\K$ to $\K$ 
is a quasi-isomorphism of $R$-modules. 
Then the triangle yields that 
$
\text{level}_{\D(R)}^{\ \K}C_*(S_l) = 
\text{level}_{\D(R)}^{\ \K}C_*(S_l)/N \leq 2.
$ We have the result.  
The latter half of the assertion follows from Lemma \ref{lem:level_1}.  
\hfill \qed

\begin{rem}
We can prove Proposition \ref{prop:level_2} by means of a minimal 
semifree resolution $\Gamma \stackrel{\simeq}{\to} C^*(X)$ of 
$C^*(X)$ as a $C^*(B)$-module. Indeed, we see that 
\[
H^*(\K\otimes_{C^*(B)}\Gamma)=H^*(\K\otimes^L_{C^*(B)}C^*(X))
= H^*(S^l) = \K\{1, w\}, 
\]
where 
$\deg 1 = 0$ and $\deg w =l$.  This implies that the filtration of $F$ 
has class at most $2$; see \cite[4.1]{ABIM}. 
Proposition \ref{prop:level_2} follows from 
\cite[Theorem 4.1]{ABIM}. 
\end{rem}

\medskip
We use again Lemma \ref{lem:thick} to prove Proposition \ref{prop:level_2^n}. 

\begin{proof}[Proof of Proposition \ref{prop:level_2^n}]
Consider the maps 
$C^*(B) \stackrel{\alpha}{\to} C^*(X_i) \stackrel{p_{i+1}^*}{\to}
C^*(X_{i+1})$, where $\alpha = (p_i\circ \cdots \circ p_1)^*$. 
Then the map $\alpha$ induces an exact functor 
$\alpha^* : \D(C^*(X_i)) \to \D(C^*(B))$. 
In view of Proposition \ref{prop:level_2} 
and Lemma \ref{lem:2}, we have 
\[
2 \geq \text{level}_{\D(C^*(X_i))}^{\ C^*(X_i)}C^*(X_{i+1})
\geq \text{level}_{\D(C^*(B))}^{\ \alpha^*C^*(X_i)}\alpha^*C^*(X_{i+1})
= \text{level}_{\D(C^*(B))}^{\ \alpha^*C^*(X_i)}C^*(X_{i+1}). 
\]
Therefore, Lemma \ref{lem:thick} and the induction hypothesis 
allow us to deduce that
\[
 \text{level}_{\D(C^*(B))}^{\ C^*(B)}C^*(X_{i+1})
\leq 
\text{level}_{\D(C^*(B))}^{\ C^*(B)}\alpha^*C^*(X_i)\cdot
\text{level}_{\D(C^*(B))}^{\ \alpha^*C^*(X_i)}C^*(X_{i+1})
\leq 2^i\cdot 2. 
\]
We have the first inequality. 
The second inequality follows from Proposition \ref{prop:L}. 
\end{proof}

\section{Proofs of Theorem \ref{thm:ghost} and Proposition \ref{prop:shriek}}

We begin by recalling a shriek map on the classifying space $BG$ 
of a connected Lie group $G$. The classifying space $BG$ is a Gorenstein space of dimension $-\dim G$; 
see \cite{FHT_G} for more details. 
Then the result \cite[Theorem 12]{F-T} deduces that 
$\text{Ext}_{C^*(BG^{\times n})}^*(C^*(BG), C^*(BG^{\times n}))\cong H^{*-(n-1)(-\dim G)}(BG)$, where 
the DG right $C^*(BG^{\times n})$-module structure on $C^*(BG)$ is induced by the diagonal map 
$\Delta^{(n-1)} : BG \to BG^{\times n}$. In particular, we have a generator of the vector space 
$$\text{Ext}_{C^*(BG^{\times n})}^{-(n-1)\dim G}(C^*(BG), C^*(BG^{\times n}))\cong H^0(BG)=\K, $$ 
which is called a {\it shriek map} associated with the diagonal map.

\begin{proof}[Proof of Theorem \ref{thm:ghost}] 
By assumption, the cohomology $H^*(BG)$ is a polynomial algebra, say $H^*(BG) = \K[x_1, ..., x_s]$. 
Then $H^*(G)$ is isomorphic to the algebra with a $2$-simple system of generators $s^{-1}x_1, ..., s^{-1}x_s$,  
where $\deg s^{-1}x_i = \deg x_i -1$; see \cite[page 154]{McC}. 
Observe that $H^*(G)$ is the exterior algebra generated by $s^{-1}x_1, ..., s^{-1}x_s$ if the characteristic of $\K$ is odd.  

\begin{claim} \label{claim:1} 
In the Leray-Serre spectral sequence $\{{}_{LS}E^{*,*}_r, d_r\}$ of the fibration  
$G \to BG^{\times (k-1)} \stackrel{B(1\times \Delta)}{\to} BG^{\times k}$, 
the generators $s^{-1}x_i$ are transgressive. More precisely, for the transgression $\tau$, one has 
$\tau(s^{-1}x_i) = \lambda_i(x_i\otimes 1 - 1\otimes x_i)$ for some non-zero scalar $\lambda_i$ under an isomorphism 
$H^*(BG^{\times k})\cong H^*(BG^{\times (k-2)})\otimes H^*(BG) \otimes H^*(BG)$. 
\end{claim}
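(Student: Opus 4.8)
The plan is to analyze the Leray--Serre spectral sequence of the fibration $G \to BG^{\times(k-1)} \xrightarrow{B(1\times\Delta)} BG^{\times k}$ by first reducing to a smaller, universal fibration and then performing an explicit computation there. First I would observe that, under the homotopy equivalence $BG^{\times k}\simeq BG^{\times(k-2)}\times BG\times BG$, the map $B(1\times\Delta)$ is, up to homotopy, the product of the identity on $BG^{\times(k-2)}$ with the map $B\Delta\colon BG\to BG\times BG$. Since the spectral sequence of a product of fibrations is the tensor product of the individual ones (and the identity fibration $BG^{\times(k-2)}=BG^{\times(k-2)}\to\ast$ contributes trivially), it suffices to treat the case $k=2$, i.e.\ the fibration $G\to BG\xrightarrow{B\Delta} BG\times BG$.

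Next I would identify this fibration concretely. The homotopy fibre of $B\Delta\colon BG\to BG\times BG$ is $(BG\times BG)\times_{BG\times BG}E(BG\times BG)$ pulled back along $B\Delta$; more usefully, it is weakly equivalent to $(G\times G)/\Delta G\cong G$ via the holonomy action, exactly as noted in the Remark following Proposition~\ref{prop:shriek}. So the relevant fibration is, up to homotopy, $G\to BG\xrightarrow{B\Delta}BG\times BG$ with fibre inclusion the map $G\hookrightarrow BG$ classifying... more precisely it is the pullback of the path--loop fibration, and it fits into the standard comparison with $EG\to BG$. Then I would feed in the classical fact (McCleary, p.~154) that because $H^*(BG)=\K[x_1,\dots,x_s]$ is polynomial, $H^*(G)$ has a $2$--simple system of generators $s^{-1}x_1,\dots,s^{-1}x_s$, and in the universal fibration $G\to EG\to BG$ each $s^{-1}x_i$ transgresses to $x_i$ (up to a unit), the spectral sequence collapsing to the Koszul-type pattern.

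The key computational step is then to pull this transgression back along $B\Delta$. The map $H^*(B\Delta)\colon H^*(BG\times BG)=H^*(BG)\otimes H^*(BG)\to H^*(BG)$ is the multiplication map, sending $x_i\otimes 1$ and $1\otimes x_i$ both to $x_i$; equivalently, on the level of fibrations our fibration is the ``difference'' pullback, so the transgression of $s^{-1}x_i$ becomes $\lambda_i(x_i\otimes 1-1\otimes x_i)$ for a nonzero scalar $\lambda_i$ (the class that maps to $0$ under multiplication, matching the fact that the fibre $G$ is now nontrivial in cohomology). I would make this precise by mapping the Leray--Serre spectral sequence of $G\to BG\xrightarrow{B\Delta}BG\times BG$ to that of $G\to EG\to BG$ along the map of fibrations covering one of the two projections $BG\times BG\to BG$, and noting the two projections together pin down the transgression in $H^*(BG)\otimes H^*(BG)$. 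Finally I would reassemble the general $k$ from the product reduction: under $H^*(BG^{\times k})\cong H^*(BG^{\times(k-2)})\otimes H^*(BG)\otimes H^*(BG)$ the transgression is $\tau(s^{-1}x_i)=\lambda_i(x_i\otimes1-1\otimes x_i)$ sitting in the last two tensor factors, which is the claim.

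I expect the main obstacle to be the bookkeeping of naturality of transgression under the map of fibrations---that is, verifying carefully that the comparison maps induced by the two projections $BG\times BG\to BG$ are maps of fibrations inducing on fibres the identity (or a degree--one self-map) of $G$, so that the known transgression $\tau(s^{-1}x_i)=x_i$ in the universal fibration does force the ``difference'' formula here rather than merely constraining one summand. A secondary subtlety is confirming that the scalars $\lambda_i$ are genuinely nonzero in every characteristic, including characteristic $2$ where the $2$--simple system (not an exterior algebra) is what one must use; this follows because a vanishing transgression would contradict the known Poincar\'e series of $H^*(BG^{\times(k-1)})$, equivalently the fact that $H^*(G)$ is not a sub-Hopf-algebra quotient forced by the fibration. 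Everything else is routine, given the collapse of the universal spectral sequence and the product decomposition.
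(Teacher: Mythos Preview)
Your reduction to the case $k=2$ (the fibration $G\to BG\xrightarrow{\Delta}BG\times BG$) is correct and matches the paper. The gap is in the comparison step. A map of fibrations from $G\to BG\xrightarrow{\Delta}BG\times BG$ to $G\to EG\to BG$ covering a projection $p_j:BG\times BG\to BG$ would require a map $BG\to EG$ over $p_j\circ\Delta=\mathrm{id}_{BG}$, i.e.\ a section of $EG\to BG$; this does not exist. One can reverse direction and map the path fibration $\Omega BG\to P_*BG\to BG$ into the free-path model $\Omega BG\to BG^{I}\xrightarrow{(ev_0,ev_1)}BG\times BG$ of the diagonal fibration, covering the inclusions $i_j:BG\to BG\times BG$. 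But naturality along $i_1$ and $i_2$ only determines the components of $\tau(s^{-1}x_i)$ lying in $H^{n_i}(BG)\otimes H^0(BG)$ and $H^0(BG)\otimes H^{n_i}(BG)$: any mixed term in $H^p(BG)\otimes H^q(BG)$ with $p,q>0$ and $p+q=n_i$ is annihilated by both $i_1^*$ and $i_2^*$, so ``the two projections together pin down the transgression'' is not justified, and the relative sign is not settled either. Such mixed terms do occur (e.g.\ for $BSO(3)$ in characteristic~$2$, where $\deg w_2+\deg w_2=\deg w_2^2$ contributes in degree~$4$).

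The paper avoids this by comparing instead with the diagonal on an Eilenberg--MacLane space. Each $x_i$ gives a map $z_i:BG\to K:=K(\K,\deg x_i)$, inducing a map of diagonal fibrations to $\Omega K\to K\xrightarrow{\Delta_K}K\times K$. In that target fibration $H^{n_i}(K\times K)$ is $2$-dimensional with basis $\iota\otimes 1,\,1\otimes\iota$, and $\Delta_K^*(\iota\otimes1-1\otimes\iota)=0$ forces this class to be hit by a differential; the only candidate is the transgression from the $1$-dimensional $H^{n_i-1}(\Omega K)$, so $\tau(s^{-1}\iota)=c(\iota\otimes1-1\otimes\iota)$ with $c\neq 0$. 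Pulling back along $z_i\times z_i$ gives the claim. The point is that in $K\times K$ there are no mixed terms in degree $n_i$, so a one-line kernel argument determines the transgression exactly; this is precisely the ingredient your approach is missing.
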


Therefore, there is no non-trivial element in ${}_{LS}E_\infty^{0, *}$ for $* > 0$. 
This implies that  that the shriek map $B(1\times \Delta)^! : C^*(BG^{\times (k-1)}) \to C^{*-d}(BG^{\times k})$ is a ghost map, where 
$d=\dim G$.  
In fact, the induced map $H^*(B(1\times \Delta)^!)$ is the integration along the fibre; see \cite[Theorems 5 and 13]{F-T}. 

We shall prove that the composition of the shriek maps $B(1\times \Delta)^! \circ \cdots \circ B\Delta^! : 
C^*(BG) \to C^{*- (n-1)d}(BG^{\times n})$ 
is non-trivial in $\D(C^*(BG^{\times n}))$. 
To this end, we consider the homotopy pullback square 
\[
\xymatrix@C25pt@R15pt{
G^{n-1} \ar[r] \ar@{=}[d]& 
LBG\times_{BG} \cdots \times_{BG} LBG \ar[r]^(0.7){\widetilde{\Delta}} \ar[d]_{ev_{0}} & LBG 
\ar[d]^{ev_{a_1, ..., a_n}}  \\
G^{n-1} \ar[r]  &BG \ar[r]_{\Delta^{(n-1)}} & BG^{\times n},
}
\eqnlabel{add-4}
\]
where  $ev_{a_1, ..., a_n}$ denotes 
the evaluation map at points $a_k= \frac{k-1}{n}$ for $k = 1, ...., n$.   
We regard the composite $B(1\times \Delta)^! \circ \cdots \circ B\Delta^!$ as the shriek map  
$(\Delta^{(n-1)})^!$ by choosing an appropriate orientation class of the fibration $\Delta^{(n-1)}$; 
see \cite[Section 2.3, Composition]{C-M} for example. 
In order to show non-triviality of the shriek map $(\Delta^{(n-1)})^!$, it suffices to prove that the shriek map 
$\widetilde{\Delta}^! : C^*(LBG\times_{BG} \cdots \times_{BG} LBG) \to C^{*-(n-1)d}(LBG)$ is non-trivial since 
$\widetilde{\Delta}^!$ is an extension of  $(\Delta^{(n-1)})^!$; see the proof of \cite[Theorem 6]{F-T}. 
We observe that 
$
H^*(LBG)\cong H^*(BG)\otimes \Delta(s^{-1}x_1, ...., s^{-1}x_s) 
$
as an algebra. 

Let ${\mathcal K} = H^*(BG) \otimes \wedge(s^{-1}QH^*(BG))\otimes H^*(BG) \to H^*(BG)$ be 
the two sided bar resolution of $H^*(BG)$; see \cite{B-S} for example. Then we have 
a projective resolution 
\[
{\mathcal K}^{\otimes_{H^*(BG)}^{n-1}} \to H^*(BG)^{\otimes_{H^*(BG)}^{n-1}}=H^*(BG)
\]
of $H^*(BG)$ 
as an $H^*(BG)^{\otimes n-1}$-module. 

Let $\{E_r, d_r\}$ be the Eilenberg-Moore spectral sequence for the right-hand side pullback in the diagram (5.1). 
Computing  the $E_2$-term by using the projective resolution 
${\mathcal K}^{\otimes_{H^*(BG)}^{n-1}} \to H^*(BG)$
mentioned above,  we see that 
\begin{eqnarray*}
E^{*,*}_2&\cong &H\big(
H^*(BG)^{\otimes n}\otimes \Delta(s^{-1}x_{1,1}, ...., s^{-1}x_{1,s}, ...., 
s^{-1}x_{n-1, 1}, ...., s^{-1}x_{n-1, s}) \\
& & \  \  \otimes_{H^*(BG)^{\otimes n}}H^*(LBG),  \  
\partial(s^{-1}x_{i,j}) = ev^*_{a_1, ...a_n}(x_{i, j}\otimes 1 - 1\otimes x_{i+1, j})\big)
\end{eqnarray*}
as a bigraded algebra. Since $ev_0 \simeq ev_{a_k}$ for any $k$, it follows that 
$ev^*_{a_1, ...a_n}\circ p_k^* \simeq ev_0$, 
where $p_k : BG^{\times n}$ denotes the projection into the $k$th factor.  
This implies that  $ev^*_{a_1, ...a_n}(x_{i, j}\otimes 1 - 1\otimes x_{i+1, j})= 0$ 
since $ev_0^*(x_i) = x_i$. For dimensional reasons, we see that   
$
E_\infty^{*,*} \cong H^*(LBG)\otimes \Delta(s^{-1}x_{1,1}, ...., s^{-1}x_{1,s}, ...., 
s^{-1}x_{n-1, 1}, ...., s^{-1}x_{n-1, s}). 
$
This fact enables us to conclude that the Leray-Serre spectral sequence of the upper fibration in the homotopy 
pull-back above collapses at the $E_2$-term. Therefore, it follows that the integration along the fibre 
$H^*((\widetilde{\Delta})^!)$ is non-trivial. This completes the proof.   
\end{proof}

\begin{proof}[Proof of Claim \ref{claim:1}] 
We consider a morphism of homotopy fibrations 
\[
\xymatrix@C25pt@R15pt{
G \ar[d] \ar@{=}[r] & G \ar[d] \\ 
BG^{\times (k-2)}\times BG \ar[d]_{1\times \Delta_{BG}} & BG^{\times (k-1)} \ar[l]_(0.4){\simeq}  \ar[d]^{B(1\times \Delta)}\\
BG^{\times (k-2)}\times BG\times BG & BG^{\times k} \ar[l]^(0.3){\simeq} 
}
\]
in which horizontal maps are homotopy equivalences. Thus in order to prove Claim \ref{claim:1}, 
it suffices to show that the result holds 
for the spectral sequence of the fibration $G \to BG \stackrel{\Delta_{BG}}{\to} BG\times BG$.  

Let $z_i : BG \to K:=K(\K, \deg x_i)$ be the map corresponding to the generator $x_i$ of $H^*(BG)$; that is, $z_i^*(\iota) = x_i$ 
for the fundamental class $\iota$ of $K$.   
In the Leray-Serre spectral sequence of the homotopy fibration 
$K(\K, \deg x-1) \to K \stackrel{\Delta_K}{\to} K \times K$, 
the transgression sends the fundamental class of the fibre to the element $\iota\otimes 1 - 1\otimes \iota$ 
up to the multiplication by a non-zero scalar 
because $\Delta_K^*(\iota\otimes 1 - 1\otimes \iota) = 0$. 
The naturality of the morphism induced by $z_i$ implies that  $\tau(s^{-1}x_i) = \lambda_i(x_i\otimes 1 - 1\otimes x_i)$ for some non-zero scalar 
$\lambda_i$. We have the result.  
\end{proof}

\begin{rem}\label{rem:generator} 
The proof of Theorem \ref{thm:ghost} enables us to conclude 
that the shriek map $(\Delta^{(n-1)})^!$ is the non-trivial generator in 
$\text{Ext}_{C^*(BG^{\times n})}^{-(n-1)\dim G}(C^*(BG), C^*(BG^{\times n}))$ and it is a ghost map. 
\end{rem}

\begin{proof}[Proof of Proposition \ref{prop:shriek}] We have a fibration of the form 
$G^{\times(n-1)} \to BG \stackrel{\Delta^{(n)}}{\to} BG^{\times n}$. Therefore, we see that 
$H^*(G^{\times (n-1)}) \cong \text{Tor}_{*}^{H^*(BG^{\times n})}(H^*(BG), \K)$ and hence the torsion product is of finite dimension. 
Then it follows from \cite[Lemma 7.1]{K5} that 
$\text{level}_{\D(C^*(BG^{\times n})}^{C^*(BG)^{\times n}}(C^*(BG)) \leq 
\text{pd}_{H^*(BG^{\times n})}H^*(BG)+1$, where $\text{pd}_{A}M$ denotes the projective dimension of an $A$-module $M$.   
Let 
\[
{\mathcal K}^{\otimes_{H^*(BG)}^{n-1}} \to H^*(BG)^{\otimes_{H^*(BG)}^{n-1}}=H^*(BG)
\]
be the projective resolution of $H^*(BG)$ 
as an $H^*(BG)^{\otimes n-1}$-module introduced in the proof of Theorem \ref{thm:ghost},
This yields that 
\[
\text{pd}_{H^*(BG^{\times n})}H^*(BG) \leq (n-1)\dim QH^*(BG).
\]
We have the upper bound of the level. 
Proposition \ref{prop:ghost} and Theorem \ref{thm:ghost} give the lower bound.  

We prove the latter half of the assertion. Since $H^*(BG; \K)$ is a polynomial algebra generated by elements with even degree, 
it follows from \cite[Proposition 2.4]{K4} that the homotopy fibration $G^{\times (n-1)} \to BG \stackrel{\Delta^{(n)}}{\to} BG^{\times n}$ is 
$\K$-formalizable; see \cite[Section 2]{K5}.
Thus the result \cite[Proposition 5.2]{K5} implies that 
\[
\text{level}_{\D(C^*(BG^{\times n})}^{C^*(BG^{\times n})}(C^*(BG)) =
 (n-1) \dim QH^*(BG; \K)+1. 
\]
This completes the proof.
\end{proof}

\medskip
\noindent
{\it Acknowledgments}. I thank Jim Stasheff for his interest in this work and for 
comments on Theorem \ref{thm:formal_DGA}. I benefited from  
inspiring discussions with Luc Menichi about string operations on 
the classifying spaces without which Theorem \ref{thm:ghost} could not have been obtained.   
I am grateful to Younggi Choi for a conversation on 
the proof of Claim \ref{claim:1}. I also thank the referee for valuable suggestions to revise a previous version of this paper.

\section{Appendix: A variant of Koszul duality for DG algebras}

In this section, we describe a result concerning Theorem \ref{thm:diagram}, which is regarded as a variant of the Koszul 
duality for DG algebras.  
We also refer the reader to the paper \cite{P} due to Positselski for a more general approach to the derived Koszul duality. 

We begin by recalling the result on a coderived category 
due to Lef\'evre-Hasegawa \cite{L-H}. 

Let $(A, d_A, \e_A)$ and $(C, d_C, \e_C)$ be 
an augmented DG algebra and a co-augmented DG coalgebra over 
a field $\K$, respectively. 
By using the kernel $\overline{C}$ of the counit of $C$, we have a decomposition $C=\overline{C}\otimes \K$. 
Let  $\overline{\Delta} : \overline{C} \to \overline{C}\otimes \overline{C}$ denote the reduced coproduct defined by 
$\overline{\Delta}(x) = \Delta(x) - x\otimes 1 - 1\otimes x$. We say that 
a coaugmented DG coalgebra is {\it cocomplete} if 
$\overline{C} = \cup_{l\geq 1}\text{Ker} \ (\overline{\Delta}^{(l)} : \overline{C} \to \overline{C}^{\otimes l+1})$, where 
$\overline{\Delta}^{(l)}$ is the iterated coproduct defined by 
$\overline{\Delta}^{(l)} =(\overline{\Delta}\otimes 1^{\otimes l-1})\circ \cdots \circ (\overline{\Delta}\otimes 1)
\circ \overline{\Delta}$. 
By definition, a {\it twisted cochain} $\tau : C \to A$ is 
a $\K$-linear map of degree $+1$ such that $\e_A\circ \tau\circ\e_C= 0$
and 
\[
d_A\circ \tau + \tau \circ d_C + \mu_A\circ (\tau\otimes \tau)\circ
\Delta_C
= 0,
\]
where $\mu_A$ and $\Delta_C$ are the multiplication of $A$ and the
comultiplication of $C$, respectively. Let $M$ be a DG right $A$-module. 
Then we defined the {\it twisted tensor product} $M\otimes_\tau C$ to
be the comodule $M\otimes C$ over $C$ 
endowed with the differential 
\[
d = d_M\otimes 1 + 1\otimes d_C - (\mu_M\otimes 1)(1\otimes \tau\otimes
1)(1\otimes \Delta_C). 
\]
For a DG $C$-comodule $N$, we define the DG module 
$N\otimes_\tau A$ similarly. 
Let $\Delta_N$ be the comodule structure of a DG $C$-comodule $N$. 
We say that $N$ is {\it cocomplete} if 
$
N = \cup_{l\geq 1}\text{Ker} \ (\overline{\Delta}_N^{(l)} : N \to N\otimes\overline{C}^{\otimes l})$, where 
$\overline{\Delta}_N(x) = \Delta_N(x)-x\otimes 1$ for $x\in N$ and $\overline{\Delta}_N^{(l)}$ denotes the iterated 
comodule structure defined by the same way as the iterated coproduct on $\overline{C}$. 

Let $C$ be a cocomplete DG coalgebra and 
$\tau_0 : C \to \Omega C$ the canonical twisting cochain. 
Then the category $\mathsf{comod-}C$ of
cocomplete DG comodules over $C$ admits the structure of a model
category for which $f : N \to N'$ is a weak equivalence, by definition, 
if and only if 
$
f\otimes 1 : N\otimes_{\tau_0}\Omega C \to N'\otimes_{\tau_0}\Omega C
$
is a quasi-isomorphism. 
For the details, see \cite[Th\'eor\`eme 2.2.2.2]{L-H}. 
Observe that $f$ is a weak equivalence, then $f$ is a quasi-isomorphism. 
This fact follows form \cite[Proposition 2.14]{FHT}. 
We define the coderived category $\D_c(C)$, which is a triangulated
category, to be the localization of the homotopy category of 
$\mathsf{comod-}C$ with respect to the class of all weak equivalences. 

\begin{rem} \label{rem:coalgebra}
Let  $C$ be a finite dimensional co-augmented coalgebra. 
The result \cite[1.6.4]{Mo} due to Montgomery allows one 
to deduce that the functor 
$F : \mathsf{comod-}C \to C^\vee\mathsf{-mod}$ mentioned in Section 3 is an equivalence of categories. 
As mentioned above, weak equivalences between cocomplete DG-comodules are 
quasi-isomorphisms. Then we see that $F$ induces a functor $F_* : \D( \mathsf{comod-}C) \to D(C^\vee\mathsf{-mod})$ of 
triangulated categories. 
Observe that the functor $F_*$ is {\it not} an equivalence of triangulated categories in general. In fact, we can regard 
the exterior algebra $\wedge (x)$ as a Hopf algebra with a primitive element $x$ of degree $-1$.  Forgetting the algebra structure 
of $\wedge (x)$, we have a DG coalgebra $C_1$ endowed with the trivial differential. 
The argument in \cite[Section 4]{Keller2} asserts that 
in $(C_1)^\vee\mathsf{-mod}$, weak equivalences form a strictly smaller class than that of quasi-isomorphisms. 

On the other hand, the equivalence $F$ allows us to obtain an equivalence 
\[
\widetilde{F_*} : \D_c(C)=\D(\mathsf{comod-}C) \stackrel{\simeq}{\longrightarrow} 
\widetilde{\D}(C^\vee\mathsf{-mod})
\]
of triangulated categories. Here  $\widetilde{\D}(C^\vee\mathsf{-mod})$ denotes the localization 
of the homotopy category of $C^\vee\mathsf{-mod}$ with respect to the class of morphisms 
which come from weak equivalences in $\mathsf{comod-}C$ by $F$.
\end{rem}

The following theorems assert that a coderived category is closely related to a derived category. 

\begin{thm}\label{thm:L-R}\cite[2.2.3, Lemma 2.2.1.2, Proposition 2.2.4.1]{L-H} 
Let $\tau : C \to A$ be a
 twisting cochain. Then one has adjoint functors
\[
\xymatrix@C30pt@R15pt{
\DD_c(C) \ar@<1ex>[rr]^{L:= - \otimes_{\tau}A} & &  
\DD(A)   \ar@<1ex>[ll]^{R:= - \otimes_{\tau}C} 
}
\]
between triangulated categories. 
\end{thm}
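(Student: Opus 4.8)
The plan is to build the adjunction first on the underlying homotopy categories of DG (co)modules and only afterwards check that both functors respect the localizing classes, so that everything descends to $\D_c(C)$ and $\D(A)$. First I would verify that $L=-\otimes_\tau A$ and $R=-\otimes_\tau C$ are well defined on the DG level: the twisted differential on $M\otimes_\tau C$ squares to zero by a direct computation combining $d_A^2=0$, $d_C^2=0$ and the twisting cochain equation $d_A\tau+\tau d_C+\mu_A(\tau\otimes\tau)\Delta_C=0$; the comodule $M\otimes_\tau C$ remains cocomplete because the twisting term strictly lowers the coradical filtration of $C$; and dually $N\otimes_\tau A$ is an honest DG $A$-module. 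Both functors are additive, commute with $\Sigma$, and send the mapping cone of a morphism to the mapping cone of its image, hence induce exact functors $L\colon K(\mathsf{comod-}C)\to K(A)$ and $R\colon K(A)\to K(\mathsf{comod-}C)$ on the homotopy categories.

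Next I would produce the adjunction on these homotopy categories. At the underlying graded level there are the classical identifications $\mathrm{Hom}_A(N\otimes_\K A,M)\cong\mathrm{Hom}_\K(N,M)\cong\mathrm{Hom}^C(N,M\otimes_\K C)$, the first from extension/restriction of scalars and the second from cofreeness of the cotensor product. A short calculation shows that under these identifications the two twisted differentials induced on $\mathrm{Hom}_\K(N,M)$ agree---each is the untwisted differential corrected by the same $\tau$-term---so one obtains an isomorphism of Hom-complexes $\underline{\mathrm{Hom}}_A(LN,M)\cong\underline{\mathrm{Hom}}^C(N,RM)$ natural in $N$ and $M$; taking $H^0$ yields $\mathrm{Hom}_{K(A)}(LN,M)\cong\mathrm{Hom}_{K(\mathsf{comod-}C)}(N,RM)$. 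Equivalently, one extracts a unit $\eta_N\colon N\to RLN$ and a counit $\varepsilon_M\colon LRM\to M$ from the (co)unit of the ordinary tensor--cotensor adjunction twisted by $\tau$ and checks the two triangle identities.

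It then remains to see that $L$ carries weak equivalences of cocomplete comodules to quasi-isomorphisms and that $R$ carries quasi-isomorphisms of $A$-modules to weak equivalences; once this is known, the universal property of the localizations produces the induced exact functors, and the unit and counit of the previous step descend, being natural transformations between functors that already invert the relevant morphisms. The statement for $R$ is the easy direction: given a quasi-isomorphism $M\to M'$, one filters $M\otimes_\tau C$ by the coradical filtration of $C$, the differential on the associated graded is untwisted, so Künneth over $\K$ gives a quasi-isomorphism on associated gradeds and hence---the filtration being exhaustive and bounded below for cocomplete $C$---a weak equivalence $M\otimes_\tau C\to M'\otimes_\tau C$.

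The main obstacle will be the complementary claim that $L$ preserves weak equivalences, i.e.\ that $L$ is already defined on $\D_c(C)$ without deriving it. Here I would use the factorization of any twisting cochain $\tau\colon C\to A$ as $\tau=g\circ\tau_0$ through the canonical twisting cochain $\tau_0\colon C\to\Omega C$, for a unique DG algebra map $g\colon\Omega C\to A$, so that $LN\cong(N\otimes_{\tau_0}\Omega C)\otimes_{\Omega C}A$. By the very definition recalled in the Appendix, a weak equivalence $N\to N'$ is a morphism inducing a quasi-isomorphism $N\otimes_{\tau_0}\Omega C\to N'\otimes_{\tau_0}\Omega C$, and $N\otimes_{\tau_0}\Omega C$ is a semifree $\Omega C$-module---this is exactly where cocompleteness of $C$ and of the comodules enters---so applying $-\otimes_{\Omega C}A$ preserves the quasi-isomorphism. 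This factorization-plus-semifreeness argument is the heart of the matter and is essentially the content of \cite[Ch.\ 2]{L-H} that makes the pair $(L,R)$ descend on the nose; an alternative packaging is to realize $(L,R)$ as a Quillen adjunction for the model structure on $\mathsf{comod-}C$ recalled above and to observe that both functors in fact preserve all weak equivalences, so that the derived functors coincide with the original ones.
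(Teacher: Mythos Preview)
The paper does not prove this theorem; it is quoted from Lef\`evre-Hasegawa's thesis with the citation \cite[2.2.3, Lemma 2.2.1.2, Proposition 2.2.4.1]{L-H}, so there is no in-paper argument to compare against. Your outline is a faithful reconstruction of the approach in \cite{L-H}: establish the adjunction at the homotopy level via the natural isomorphism of Hom-complexes coming from the free--cofree adjunction, then verify that $L$ and $R$ respect the localizing classes, with the factorization $\tau=g\circ\tau_0$ and the semifreeness of $N\otimes_{\tau_0}\Omega C$ doing the real work for $L$.

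One step needs tightening. For $R$, your coradical-filtration argument shows that $Rf\colon M\otimes_\tau C\to M'\otimes_\tau C$ is a \emph{quasi-isomorphism} whenever $f$ is, but you then label it a weak equivalence. As recalled in the Appendix (and illustrated in Remark~\ref{rem:coalgebra}), weak equivalences of comodules form a strictly smaller class than quasi-isomorphisms in general, so this does not yet give what is required. The repair is short: run a second filtration argument, for instance by word length in $\Omega C$, on $(M\otimes_\tau C)\otimes_{\tau_0}\Omega C$ to see directly that applying $-\otimes_{\tau_0}\Omega C$ to $Rf$ yields a quasi-isomorphism, which is precisely the definition of $Rf$ being a weak equivalence; alternatively, invoke that cofree comodules such as $M\otimes_\tau C$ are fibrant in the model structure of \cite{L-H} and that quasi-isomorphisms between fibrant objects are weak equivalences. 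A small aside: cocompleteness of $M\otimes_\tau C$ holds simply because its $C$-comodule structure is the cofree one $1\otimes\Delta_C$ on $M\otimes C$, independent of the twisting term in the differential.
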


\begin{thm}\label{thm:ev}\cite[Proposition 2.2.4.1]{L-H}
The following are equivalent. \\
{\rm (i)} The map $\tau$ induces a quasi-isomorphism $\Omega (C) \to A$. \\
{\rm (ii)} The canonical map $A\otimes_\tau C\otimes_\tau A \to A$ is 
a quasi-isomorphism. \\
{\rm (iii)} The functor $L$ and $R$ in Theorem \ref{thm:L-R} 
are equivalences.  
\end{thm}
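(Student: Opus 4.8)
The plan is to reduce everything to the DG algebra map $g_\tau\colon \Omega C\to A$ which classifies $\tau$ via the universal property of the cobar construction, so that $\tau=g_\tau\circ\tau_0$ with $\tau_0\colon C\to\Omega C$ the canonical twisting cochain; condition (i) then says exactly that $g_\tau$ is a quasi-isomorphism. The single non-formal ingredient I would establish first is the \emph{universal acyclicity}: the two-sided twisted tensor product $\Omega C\otimes_{\tau_0}C\otimes_{\tau_0}\Omega C$, together with its augmentation onto the diagonal bimodule, is a semifree resolution of $\Omega C$ over $(\Omega C)^{e}=\Omega C\otimes(\Omega C)^{\mathrm{op}}$. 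The resolution property follows from the classical contracting homotopy of the (co)bar construction, whose convergence is guaranteed by the cocompleteness of $C$ (the filtration $\overline C=\cup_{l}\ker\overline\Delta^{(l)}$ makes the homotopy locally nilpotent), and semifreeness is read off from the word-length filtration of the tensor algebra. From this I also record the one-sided consequences that $\Omega C\otimes_{\tau_0}C$ and $C\otimes_{\tau_0}\Omega C$ are semifree $\Omega C$-module resolutions of $\K$.

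For (i)$\Rightarrow$(ii) I would base-change along $g_\tau\otimes g_\tau^{\mathrm{op}}$: there is a natural isomorphism of complexes $A\otimes_\tau C\otimes_\tau A\cong A^{e}\otimes_{(\Omega C)^{e}}\bigl(\Omega C\otimes_{\tau_0}C\otimes_{\tau_0}\Omega C\bigr)$ compatible with the augmentations, so by semifreeness the left-hand side computes $A^{e}\otimes^{L}_{(\Omega C)^{e}}\Omega C$; when $g_\tau$ is a quasi-isomorphism, restriction of scalars along $g_\tau^{e}$ is an equivalence under which $A$ corresponds to $\Omega C$, hence $A\otimes_\tau C\otimes_\tau A\simeq A$ and the canonical map is a quasi-isomorphism. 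For the equivalence with (iii): $L=-\otimes_\tau A$ and $R=-\otimes_\tau C$ are adjoint by Theorem~\ref{thm:L-R}, so they are inverse equivalences iff the unit and the counit are isomorphisms. Evaluating the counit $LR(M)=M\otimes_{A}(A\otimes_\tau C\otimes_\tau A)\to M$ at $M=A$ returns the canonical map, while semifreeness of $A\otimes_\tau C\otimes_\tau A$ as a left $A$-module shows conversely that (ii) makes the counit an isomorphism for every $M$; thus ``counit invertible'' is equivalent to (ii). Applying $-\otimes_{\tau_0}\Omega C$ to the unit $N\to RL(N)$ (which detects weak equivalences in $\D_c(C)$) and using the one-sided universal resolution identifies it with the base-change map $N\otimes_{\tau_0}\Omega C\to (N\otimes_{\tau_0}\Omega C)\otimes_{\Omega C}A$, which is a quasi-isomorphism iff $g_\tau$ is; thus ``unit invertible'' is equivalent to (i). Hence (iii) is equivalent to the conjunction of (i) and (ii).

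It remains to prove (ii)$\Rightarrow$(i), which I expect to be the main obstacle. Tensoring the canonical quasi-isomorphism $A\otimes_\tau C\otimes_\tau A\to A$ with $\K$ over $A$ on the right, and using that $A\otimes_\tau C\otimes_\tau A$ is free over $A$ on that side, one is reduced to the statement that $A\otimes_\tau C\to\K$ is a quasi-isomorphism; since $A\otimes_\tau C\cong A\otimes_{\Omega C}(\Omega C\otimes_{\tau_0}C)$ and $\Omega C\otimes_{\tau_0}C$ is a semifree $\Omega C$-resolution of $\K$, this says $A\otimes^{L}_{\Omega C}\K\simeq\K$, with the comparison realized by $g_\tau$ (symmetrically $\K\otimes^{L}_{\Omega C}A\simeq\K$). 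The final step is a Nakayama/minimal-model argument over the cobar algebra: a DG $\Omega C$-module admits a minimal semifree model — here cocompleteness of $C$ is the essential hypothesis, as it supplies a complete filtration compatible with the cobar differential — and $A\otimes^{L}_{\Omega C}\K\simeq\K$ forces that model to have a single generator, in degree zero, so that $A\simeq\Omega C$ with the comparison map $g_\tau$. This is where one genuinely needs the homotopy theory of modules over a cobar algebra for \emph{cocomplete} (not merely simply connected) coalgebras, which is the technical core of Lef\`evre-Hasegawa's treatment \cite{L-H}; everything else is base change and adjunction bookkeeping. Combining (i)$\Leftrightarrow$(ii) with the previous paragraph then also yields (iii), closing the cycle.
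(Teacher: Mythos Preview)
The paper does not supply its own proof of this statement; Theorem~\ref{thm:ev} is recorded purely as a citation of \cite[Proposition~2.2.4.1]{L-H} and then used as a black box (for example in the discussion following Theorem~\ref{thm:L-R} and in the proof of Theorem~\ref{thm:formal_DGA}). So there is nothing in the paper to compare your argument against, and what you have written is a reconstruction rather than a comparison.

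That said, your outline is a sensible reconstruction of the Lef\`evre-Hasegawa argument. Two comments. First, once you have the universal case ($L_0$ and $R_0$ inverse equivalences), the factorisations $L=g_{\tau*}\circ L_0$ and $R=R_0\circ g_\tau^*$ that you set up give (i)$\Leftrightarrow$(iii) in one stroke: $L$ and $R$ are equivalences iff extension and restriction along $g_\tau$ are, iff $g_\tau$ is a quasi-isomorphism. This short-circuits the separate unit/counit bookkeeping and isolates (i)$\Leftrightarrow$(ii) as the only substantive equivalence left.

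Second, your (ii)$\Rightarrow$(i) step deserves a warning beyond what you wrote. From (ii) you correctly extract $A\otimes^L_{\Omega C}\K\simeq\K$, but the naive derived Nakayama (``$M\otimes^L_{\Omega C}\K\simeq 0\Rightarrow M\simeq 0$'') is exactly what fails in this generality: under the equivalence $R_0:\D(\Omega C)\to\D_c(C)$ the derived fibre of $L_0(N)$ recovers $N$ only as a \emph{complex}, and acyclic comodules need not vanish in the coderived category. The repair is precisely the one you point to --- a filtered Nakayama using the complete coradical filtration coming from cocompleteness of $C$, which is the genuine technical input from \cite{L-H} --- but it is worth flagging that the ``minimal semifree model'' you invoke does not come from a grading hypothesis on $\Omega C$ and must be produced by that filtration argument.
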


Let $V$ be a finite dimensional, non-negatively 
graded vector space with $V^{\text{odd}}=0$. 
Let $SV$ be the polynomial algebra and $\wedge \Sigma V$ 
the primitively generated coalgebra whose underlying space is the exterior algebra 
on $\Sigma V$. 
Then the projection from 
$\wedge \Sigma V$ to  $\Sigma V$ and the inclusion form $V$ to $SV$ 
give rise to a twisting cochain 
$\tau : \wedge \Sigma V \to SV$. 
Thus we have exact functors between derived and coderived categories  
\[
\xymatrix@C30pt@R15pt{
\widetilde{\D}((\wedge \Sigma V)^\vee\mathsf{-mod})  & 
\D_c(\wedge \Sigma V) \ar@<1ex>[rr]^{L:= - \otimes_{\tau}SV} 
\ar[l]_(0.37){\widetilde{F_*}} & & 
\D(SV)   \ar@<1ex>[ll]^{R:= - \otimes_{\tau}\wedge \Sigma V}. 
}
\]
Here $\widetilde{F_*}$ stands for the functor defined in Remark \ref{rem:coalgebra}.

The existence of the two-sided Koszul resolution (see for example \cite{B-S})  
implies that the functor $R$ gives an equivalence with inverse $L$. 
Indeed this follows from the equivalence of 
the assertions (ii) and (iii) in Theorem \ref{thm:ev}.  
Moreover, since the vector space $V$ is of finite dimension, 
the functor $\widetilde{F_*}$ is also an equivalence between 
$\D( \wedge \Sigma V)$ and $\D((\wedge \Sigma V)^\vee \mathsf{-mod})$; 
see Remark \ref{rem:coalgebra}.   

More generally, the proof of \cite[Theorem 4.4]{H-W} due to He and Wu enables us to deduce
the following result. 

\begin{thm} {\em (}cf. \cite[Theorem 7.4]{ABIM}, \cite[Theorem 4.7]{H-W} {\em )} \label{thm:formal_DGA} 
Let $A$ be a locally finite, simply-connected 
DG algebra over a field $\K$. 
Suppose that the dual $({B}A)^\vee$ to the bar construction 
is formal in the sense that  $({B}A)^\vee$ admits a $TV$-model 
$TV \stackrel{\simeq}{\to} ({B}A)^\vee$ together with 
a quasi-isomorphism $TV \stackrel{\simeq}{\to} 
H(({B}A)^\vee)=\text{\em Ext}_A(\K, \K)$. 
Assume further that $\text{\em Ext}_A(\K, \K)$ is of finite dimension. Then one has equivalences 
\[
\xymatrix@C30pt@R15pt{
\widetilde{\DD}(\text{\em Ext}_A(\K, \K) \mathsf{-mod})\ar@<1ex>[r]^(0.7)h_(0.7){\simeq} & 
\DD(A)  \ar@<1ex>[l]^(0.3)t
}
\] 
of triangulated categories. If $A$ is $2$-connected, then $t$ satisfies the condition that 
$t(\K)= \text{\em Ext}_A(\K, \K)^\vee$ and $t(A)=\K$ in 
$\widetilde{\DD}(\text{\em Ext}_A(\K, \K)  \mathsf{-mod})$. 
\end{thm}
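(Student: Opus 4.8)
The plan is to reproduce the proof of \cite[Theorem 4.4]{H-W}, organised around the bar--cobar adjunction of Theorems \ref{thm:L-R} and \ref{thm:ev} together with the vector space duality between the bar construction and its linear dual. Write $E := \text{Ext}_A(\K,\K) = H((BA)^\vee)$, regarded as a DG algebra with zero differential; by hypothesis $E$ is finite dimensional, and since $A$ is locally finite and simply-connected the bar construction $BA$ is a locally finite cocomplete DG coalgebra and $(BA)^\vee$ is a locally finite DG algebra. The first step is to record the bar--cobar equivalence. For the canonical twisting cochain $\tau : BA \to A$, the induced DG algebra map $\Omega(BA) \to A$ --- the counit of the cobar--bar adjunction --- is a quasi-isomorphism for connected $A$ (cf.\ \cite{F-H-T}), so the equivalence of (i) and (iii) in Theorem \ref{thm:ev} makes $L = -\otimes_\tau A$ and $R = R_A = -\otimes_\tau BA$ from Theorem \ref{thm:L-R} mutually inverse equivalences $\DD_c(BA) \simeq \DD(A)$. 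Since $R_A$ is the functor $-\otimes_A B(A;A)$ we have $R(\K) = BA$ (the regular comodule), hence $L(BA) \simeq \K$; and $L(\K) = \K\otimes_\tau A = A$, because the canonical $\tau$ vanishes on the coaugmentation.

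Next I would carry the formality datum across the linear duality. Dualising $(BA)^\vee \xleftarrow{\simeq} TV \xrightarrow{\simeq} E$, and taking (as in the proof of Proposition \ref{prop:equivalence}) the $TV$-model locally finite so that all three terms are locally finite, produces quasi-isomorphisms of DG coalgebras
\[
BA \ \xrightarrow{\simeq}\ (TV)^\vee \ \xleftarrow{\simeq}\ E^\vee .
\]
The hypothesis that $(BA)^\vee$ carries a $TV$-model forces it, and hence $BA$, $(TV)^\vee$ and $E^\vee$, to be suitably reduced; under these reducedness conditions the three displayed coalgebras are cocomplete and the quasi-isomorphisms are weak equivalences in the sense of the Appendix (cf.\ \cite[Proposition 2.14]{FHT} and \cite{L-H}), so they descend to equivalences of coderived categories $\DD_c(E^\vee) \simeq \DD_c((TV)^\vee) \simeq \DD_c(BA)$. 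A direct check shows that the coalgebra maps above are comodule maps from the corestricted regular comodules onto the regular comodules, so these equivalences send the regular comodule of $E^\vee$ to that of $BA$ and the trivial comodule to the trivial comodule.

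Since $E$ is finite dimensional, $E^{\vee\vee} \cong E$ and $F : \mathsf{comod-}E^\vee \to E\mathsf{-mod}$ is an equivalence of categories by \cite[1.6.4]{Mo}, yielding the equivalence $\widetilde{F_*} : \DD_c(E^\vee) \xrightarrow{\simeq} \widetilde{\DD}(E\mathsf{-mod})$ of Remark \ref{rem:coalgebra}. I would then define $h$ as the composite
\[
h\colon\ \widetilde{\DD}(E\mathsf{-mod})\ \xrightarrow{\widetilde{F_*}^{-1}}\ \DD_c(E^\vee)\ \xrightarrow{\simeq}\ \DD_c(BA)\ \xrightarrow{L}\ \DD(A)
\]
and set $t := h^{-1}$; these are the asserted inverse equivalences of triangulated categories, every constituent functor being exact. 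For the final statement, assume $A$ is $2$-connected. Then $\widetilde{F_*}$ sends the regular comodule $E^\vee$ to $E^\vee$ viewed as an $E$-module, the middle equivalence carries it to the regular $BA$-comodule, and $L(BA) \simeq \K$; hence $h(E^\vee) \cong \K$, that is $t(\K) \cong E^\vee = \text{Ext}_A(\K,\K)^\vee$. Likewise the trivial $E$-module $\K$ goes to the trivial $BA$-comodule, on which $L$ takes the value $A$, so $h(\K) \cong A$, that is $t(A) \cong \K$.

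I expect the main obstacle to be the connectivity bookkeeping of the middle step: checking, under the standing hypotheses (and the $2$-connectedness needed for the statement about $t$), that $BA$, $(TV)^\vee$ and $E^\vee$ are genuinely cocomplete and sufficiently reduced DG coalgebras for the quasi-isomorphisms among them to qualify as weak equivalences, and hence to descend to equivalences of coderived categories; and, relatedly, tracing the regular and trivial comodules faithfully through the whole chain formed by $\widetilde{F_*}^{-1}$, the coalgebra-induced equivalence, and $L$, so that the identifications $t(\K) \cong E^\vee$ and $t(A) \cong \K$ are legitimate. The remaining ingredients --- local finiteness of the $TV$-model, and exactness of $\widetilde{F_*}$, of $L$ and of the coalgebra-induced functors --- are already available from the results assembled above.
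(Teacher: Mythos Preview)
Your outline is correct and reaches the same equivalence, but the paper organises the argument so as to sidestep exactly the obstacle you flag. Instead of comparing the coderived categories $\D_c(BA)$, $\D_c((TV)^\vee)$ and $\D_c(E^\vee)$ directly, the paper applies the cobar functor $\Omega$ to the dualised chain to produce DG \emph{algebra} quasi-isomorphisms
\[
\Omega(E^\vee) \stackrel{\simeq}{\longrightarrow} \Omega((TV)^\vee) \stackrel{\simeq}{\longleftarrow} \Omega((BA)^{\vee\vee}) \stackrel{\simeq}{\longleftarrow} \Omega(BA) \stackrel{\simeq}{\longrightarrow} A,
\]
and then uses the standard fact that DG algebra quasi-isomorphisms induce equivalences of derived categories to obtain $\beta : \D(A) \stackrel{\simeq}{\to} \D(\Omega(E^\vee))$ with $\beta(A)=\Omega(E^\vee)$ and $\beta(\K)=\K$. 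The bar--cobar adjunction of Theorem~\ref{thm:ev} is then invoked only once, for the canonical twisting cochain $E^\vee \to \Omega(E^\vee)$, giving $R:\D(\Omega(E^\vee))\stackrel{\simeq}{\to}\D_c(E^\vee)$; finally $\widetilde{F_*}$ is applied as in your argument, so that $t = \widetilde{F_*}\circ R\circ \beta$. This route never needs $(TV)^\vee$ to be a cocomplete coalgebra, nor that coalgebra quasi-isomorphisms descend to equivalences of coderived categories: passing through $\Omega$ puts everything on the algebra side where invariance under quasi-isomorphism is standard. Your route applies the bar--cobar equivalence at $BA\leftrightarrow A$ instead and then transports through three coderived categories; it can be made rigorous, but the connectivity bookkeeping you anticipate is genuine, and the paper's detour through $\Omega$ is precisely the device that discharges it. For the identifications $t(A)\cong\K$ and $t(\K)\cong E^\vee$, the paper computes $R(\beta(A))=B(\Omega(E^\vee);\Omega(E^\vee))$ and $R(\beta(\K))=B\Omega(E^\vee)$ and then argues that the natural maps $\eta : B(\Omega(E^\vee);\Omega(E^\vee))\to\K$ and $\sigma : E^\vee \to B\Omega(E^\vee)$ are \emph{weak equivalences} (not merely quasi-isomorphisms) in $\mathsf{comod-}E^\vee$ once $E^\vee$ is simply-connected, which is where the $2$-connectedness of $A$ enters.
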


Let $A$ be a $2$-connected DG algebra as in Theorem \ref{thm:formal_DGA}. Then 
it follows that  
for an object $M$ in $\D(A)$,   
\[
\text{level}_{\D(A)}^{\ A}(M) = \text{level}_
{\widetilde{\D}(\text{Ext}_A(\K, \K)\mathsf{-mod})}^{\ \K}(t(M)).
\]

\begin{proof}[Proof of Theorem \ref{thm:formal_DGA}.]
Since $A$ is simply-connected and locally finite, 
it follows that the bar construction is
 also locally finite. Thus we can assume that for the $TV$-model 
$TV \stackrel{\simeq}{\to} ({B}A)^\vee$, the
 graded vector space $V$ is locally finite; 
 see the proof of Proposition \ref{prop:equivalence}.  Then the sequence of 
quasi-isomorphisms 
\[
E:= \text{Ext}_A(\K, \K)=H(({B}A)^\vee) 
\stackrel{\simeq}{\longleftarrow} TV \stackrel{\simeq}{\longrightarrow} 
({B}A)^\vee
\]
of DG algebras gives rise to a sequence of quasi-isomorphisms
\[
\Omega(E^\vee) \stackrel{\simeq}{\longrightarrow} \Omega (TV^\vee)  
\stackrel{\simeq}{\longleftarrow}  
\Omega(({B}A)^{\vee \vee}) \stackrel{\simeq}{\longleftarrow}
\Omega({B}A) \stackrel{\simeq}{\longrightarrow} A
\]
as DG algebras. Thus we have equivalences 
\[
\xymatrix@C30pt@R15pt{
\D(\Omega (E^\vee))\ar@<1ex>[r]^(0.6)\alpha_(0.6){\simeq} & 
\D(A)  \ar@<1ex>[l]^(0.4)\beta
}
\]
of triangulated categories for which $\beta(A)=\Omega(E^\vee)$ and 
$\beta(\K)= \K$. 
The canonical twisting cochain 
$\tau_0 : E^\vee \to \Omega (E^\vee)$ induces 
the identity map $\Omega(E^\vee) \to \Omega(E^\vee)$. 
In view of Theorem \ref{thm:ev}, we have equivalences
\[
\xymatrix@C30pt@R15pt{
\D({\mathsf{comod-}}(E^\vee))\ar@<1ex>[rr]^(0.6)L_(0.6){\simeq} & &
\D(\Omega (E^\vee)).   
\ar@<1ex>[ll]^(0.4){R=-\otimes_{\Omega (E^\vee)}{B}(\Omega(E^\vee); \Omega (E^\vee))}
}
\]
Since $E^\vee$ is a finite dimensional coalgebra by assumption, it
 follows that the functor  
\[
\widetilde{F_*} : \D({\mathsf{comod-}}(E^\vee)) 
\stackrel{\simeq}{\longrightarrow} \widetilde{\D}(E\mathsf{-mod}),
\]
which is defined in Remark \ref{rem:coalgebra} 
gives an equivalence of triangulated categories. 
Then one has an equivalence 
$t:= \widetilde{F_*} \circ R\circ \beta : \D(A) \to \widetilde{\D}(E\mathsf{-mod})$. 

The natural map $\sigma : E^\vee \to B\Omega(E^\vee)$ and 
$\eta : B(\Omega(E^\vee); \Omega(E^\vee)) \to \K$ are quasi-isomorphisms; see \cite[Propositions 2.4 and 2.14]{FHT}.  
If $A$ is $2$-connected, then $E^\vee$ is simply-connected. Then it follows from 
\cite[Remark 2.3]{FHT} that maps $\sigma$ and $\eta$ are weak equivalences. This implies the latter half of the theorem.  
\end{proof}

The following proposition provides 
examples of DG algebras which satisfy the assumptions in 
Theorem \ref{thm:formal_DGA}.  

\begin{prop}
\label{prop:ex}
Let $E$ be a non-positively graded, connected DG
 algebra; that is, $E^0 = \K$ and  $E^i=0$ for $i> 1$. 
Suppose further that $E$ is formal and of finite dimension. Put
 $A=\Omega(E^\vee)$. Then the algebra $({B}A)^\vee$ is a formal and 
$H(({B}A)^\vee)\cong \text{\em Ext}_A(\K, \K) \cong H(E)$ 
as algebras. In consequence, 
the DG algebra $A$ satisfies all the assumptions in 
Theorem \ref{thm:formal_DGA}. Thus one has equivalences 
\[
\xymatrix@C30pt@R15pt{
\widetilde{\DD}(H(E)\mathsf{-mod})\ar@<1ex>[r]^(0.55)h_(0.55){\simeq} & 
\DD(\Omega(E^\vee)) \ar@<1ex>[l]^(0.45)t
}
\]
of triangulated categories. Assume further that $E^\vee$ is simply-connected. Then one has $t(\Omega(E^\vee))=\K$ and
 $t(\K)=H(E)^\vee$.  
\end{prop}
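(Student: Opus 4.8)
The plan is to verify that the DG algebra $A=\Omega(E^\vee)$ fulfils every hypothesis of Theorem \ref{thm:formal_DGA} and then to read off the stated equivalences through an algebra isomorphism $\text{Ext}_A(\K,\K)\cong H(E)$; almost all of the work lies in identifying $(BA)^\vee$ with $E$. First I would record connectivity and finiteness. Since $E$ is finite-dimensional, connected and concentrated in non-positive degrees, its graded dual $E^\vee$ is a finite-dimensional co-augmented DG coalgebra with $(E^\vee)^0=\K$, so $\overline{E^\vee}$ is concentrated in degrees $\geq 1$, and in degrees $\geq 2$ exactly when $E^{-1}=0$, i.e.\ when $E^\vee$ is simply-connected. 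Consequently $A=\Omega(E^\vee)$ is a locally finite, simply-connected DG algebra (its cobar generators lie in degrees $\geq 2$, and in degrees $\geq 3$ when $E^{-1}=0$), so $A$ is moreover $2$-connected as soon as $E^\vee$ is simply-connected; in particular $BA$ is locally finite.

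Next I would identify $(BA)^\vee$ with $E$. The bar--cobar formalism recalled in the Appendix, together with \cite[Proposition 2.14]{FHT}, provides a quasi-isomorphism of DG coalgebras $\sigma_{E^\vee}\colon E^\vee\stackrel{\simeq}{\longrightarrow} B\Omega(E^\vee)=BA$. Dualizing, and using that $E$ is finite-dimensional so that $(E^\vee)^\vee=E$ and so that $(-)^\vee$ carries quasi-isomorphisms between locally finite complexes to quasi-isomorphisms, we obtain a quasi-isomorphism of DG algebras $\sigma_{E^\vee}^\vee\colon (BA)^\vee\stackrel{\simeq}{\longrightarrow} E$. Since the bar resolution $B(A;A)\to{}_A\K$ is a semifree resolution of $\K$, adjunction gives $\text{Ext}_A(\K,\K)=H(\text{Hom}_A(B(A;A),\K))=H((BA)^\vee)$ as algebras; combined with $H(\sigma_{E^\vee}^\vee)$ this yields algebra isomorphisms $\text{Ext}_A(\K,\K)\cong H((BA)^\vee)\cong H(E)$, which are finite-dimensional because $E$ is.

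It then remains to upgrade the formality of $E$ into the rigid form demanded by Theorem \ref{thm:formal_DGA}. By hypothesis $E$ is formal, so there is a zig-zag of quasi-isomorphisms of DG algebras joining $E$ to $H(E)$ (the latter with zero differential); splicing $\sigma_{E^\vee}^\vee$ into this zig-zag exhibits $(BA)^\vee$ as formal. To obtain the precise statement needed, I would fix a $TV$-model $TV\stackrel{\simeq}{\longrightarrow}(BA)^\vee$ in the sense of Halperin--Lemaire (such a model exists: $(BA)^\vee$ is the dual of the bar construction of a locally finite simply-connected algebra, hence locally finite and concentrated in non-positive degrees, and one may take $V$ locally finite exactly as in the proof of Proposition \ref{prop:equivalence}) and lift the above zig-zag along it, using that $TV$-models are cofibrant, to produce a quasi-isomorphism $TV\stackrel{\simeq}{\longrightarrow} H((BA)^\vee)=\text{Ext}_A(\K,\K)$. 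Now every hypothesis of Theorem \ref{thm:formal_DGA} holds for $A=\Omega(E^\vee)$, so that result furnishes the equivalences $h$ and $t$ between $\widetilde{\DD}(\text{Ext}_A(\K,\K)\mathsf{-mod})=\widetilde{\DD}(H(E)\mathsf{-mod})$ and $\DD(\Omega(E^\vee))$, the first identification using the algebra isomorphism above. When $E^\vee$ is simply-connected, $A$ is $2$-connected, and the final clause of Theorem \ref{thm:formal_DGA} gives $t(A)=\K$, i.e.\ $t(\Omega(E^\vee))=\K$, and $t(\K)=\text{Ext}_A(\K,\K)^\vee\cong H(E)^\vee$.

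The step I expect to be the main obstacle is this last upgrade: matching the unadorned formality of $E$ with the $TV$-model hypothesis of Theorem \ref{thm:formal_DGA}, which forces one to know that $TV$-models of the DG algebra $(BA)^\vee$ (concentrated in non-positive degrees) exist and are cofibrant enough for a zig-zag of quasi-isomorphisms to be rectified to a single quasi-isomorphism out of $TV$. A secondary point demanding care is the local-finiteness bookkeeping that makes $\sigma_{E^\vee}^\vee$ a quasi-isomorphism and identifies $(BA)^\vee$ with $E$ itself rather than merely with $E^{\vee\vee}$ --- precisely where the finite-dimensionality of $E$ is used.
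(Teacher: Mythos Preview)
Your proposal is correct and follows essentially the same route as the paper: both use the coalgebra quasi-isomorphism $E^\vee \stackrel{\simeq}{\to} B\Omega(E^\vee)=BA$ from \cite[Proposition 2.14]{FHT}, dualize (using finite-dimensionality of $E$) to link $(BA)^\vee$ with $E$, take a $TV$-model of $(BA)^\vee$, and then transport the formality of $E$ through this zig-zag to produce the required quasi-isomorphism $TV \stackrel{\simeq}{\to} H((BA)^\vee)$. For the step you flagged as the main obstacle --- rectifying the zig-zag to a single quasi-isomorphism out of $TV$ --- the paper invokes the lifting lemma \cite[Lemma 3.6]{FHT2}, which is exactly the cofibrancy property you alluded to; your extra verification of the connectivity and local-finiteness of $A$ is not spelled out in the paper's proof but is implicit in its claim that Theorem \ref{thm:formal_DGA} applies.
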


\begin{proof}
Since $E$ is a finite dimensional DG algebra, it follows from 
\cite[Proposition 2.14]{FHT} that there exists 
a quasi-isomorphism 
$\alpha : E^\vee \stackrel{\simeq}{\longrightarrow} 
{B}A = {B}\Omega (E^\vee)$
of coalgebras. Let $\eta : TV \stackrel{\simeq}{\longrightarrow} 
({B}\Omega (E^\vee))^\vee$ be a TV-model. 
We then have a sequence 
\[
E \cong E^{\vee \vee} \mapleftud{\simeq}{\alpha^\vee \circ \eta} \ 
TV \maprightud{\simeq}{\eta} ({B}A)^\vee 
\]
of quasi-isomorphism of DG algebras. 
By assumption, the DG algebra $E$ is formal. This enables us to obtain
 quasi-isomorphisms
$H(E) \stackrel{\simeq}{\longleftarrow} TW \stackrel{\simeq}{\longrightarrow}  E$.  
The lifting lemma \cite[Lemma 3.6]{FHT2} yields a quasi-isomorphism 
$TV \stackrel{\simeq}{\longrightarrow}  H(E)$  
of DG algebras and hence $({B}A)^\vee$ is
 formal. 
\end{proof}

\begin{ex} \label{ex:E-P} 
Let $E$ be an exterior algebra $\wedge (x_1, ..., x_n)$ 
generated by $x_1$, ..., $x_n$, where   
$-\deg x_i$ is odd for any $i$. We have an isomorphism 
$H(\Omega (E^\vee)) 
\cong H(({B}E)^\vee)=\text{Tor}_E(\K, \K)^\vee$ 
of algebras which sends the cycles $\langle x_i^\vee \rangle$ to $[x_i]^\vee$. 
Moreover, there exists an isomorphism 
\[
\eta : \text{Tor}_E(\K, \K)=H({B}E) \stackrel{\cong}{\to} \Gamma[sx_1 , ..., sx_n]
\]
of coalgebras such that $\eta([x_i])=sx_i$, where 
$\deg sx_i = \deg x_i - 1$ and 
$\Gamma[sx_1 , ..., sx_n]$ stands for the divided power Hopf algebra with the
 comultiplication
 $\Delta(\gamma_i(sx_j))=\sum_{k+l=j}\gamma_i(sx_j)\otimes
 \gamma_k(sx_j)$; see the proof of \cite[Lemma 1.5]{K1}. 
Thus we see that the algebra 
$H(\Omega (E^\vee))$ is isomorphic to the polynomial algebra 
$\K[sx_1^\vee, ..., sx_n^\vee]$, 
where $\deg sx_i^\vee = -\deg x_i + 1$. 
Since the algebra $\Omega (E^\vee)$ is free, it follows that 
there exists a quasi-isomorphism 
$
\theta :  \Omega (E^\vee) \stackrel{\simeq}{\to} \K[sx_1^\vee , ..., sx_n^\vee]
$
of algebras such that $\theta(\langle x_i^\vee \rangle)=sx_i^\vee$ for
 $i$. This implies that $\Omega (E^\vee)$ is  formal. 
Therefore,   
Theorem \ref{thm:formal_DGA} and Proposition \ref{prop:ex} enable us to obtain 
equivalences 
\[
\xymatrix@C30pt@R15pt{
\widetilde{\D}(\wedge (x_1, ..., x_n)\mathsf{-mod})\ar@<1ex>[r]^(0.55)h_(0.55){\simeq} & 
\D(\K[sx_1^\vee , ..., sx_n^\vee]) \ar@<1ex>[l]^(0.45)t
}
\]
of triangulated categories. 
This result is a variant of \cite[Theorem 7.4]{ABIM}; see also 
\cite[Section 4]{Keller2}. 
\end{ex}

\end{document}